\documentclass[11pt]{article}

\usepackage{latexsym,amssymb,amsmath}

\pagestyle{headings}

\newcommand{\Z}{\mathbb Z}
\newcommand{\Q}{\mathbb Q}

\newcommand{\F}{\mathbb F}

\newcommand{\pslp}{\mathrm{PSL}(2,p)}
\newcommand{\slp}{\mathrm{SL}(2,p)}

\newcommand{\slz}{\mathrm{SL}(2, \mathbb Z)}

\newcommand{\dih}[1]{\mathrm{D}_{#1}}

\newcommand{\sma}{\left(\begin{array}}
\newcommand{\fma}{\end{array}\right)}
\newcommand{\Alt}{\mathrm{A}}
\newcommand{\Sym}{\mathrm{S}}
\newcommand{\udot}{\mathord{{}^{\textstyle{{\cdot}}}{}}}
\newcommand{\lgl}{\langle}
\newcommand{\rgl}{\rangle}
\newcommand{\diag}{\mathrm{diag}}
\newcommand{\antidiag}{\mathrm{antidiag}}
\newcommand{\Magma}{{\sc Magma }}
\newcommand{\Magman}{{\sc Magma}}

\newtheorem{lem}{Lemma}[section]

\newtheorem{co}[lem]{Corollary}
\newtheorem{thm}[lem]{Theorem}
\newtheorem{prop}[lem]{Proposition}

\newenvironment{proof}{\textbf{Proof.}}{\hspace*{\fill}{$\Box$}\\}

\title{An explicit upper bound for the Helfgott delta in $\slp$}
\author{Jack Button and Colva Roney-Dougal}

\begin{document}

\maketitle

\begin{center}
{\em Dedicated to the memory of Akos Seress.}
\end{center}

\begin{abstract}
Helfgott proved that there exists a  $\delta>0$ such that if $S$ is a symmetric
generating subset of $\slp$ containing 1 then either $S^3=\slp$ or
$|S^3|\geq |S|^{1+\delta}$. It is known that $\delta\geq 1/3024$. Here
we show that $\delta\leq(\log_2(7)-1)/6 \approx 0.3012$
and we present evidence suggesting that this might be the true value
of $\delta$.
\end{abstract}
\section{Introduction}

A subset $S$ of a group $G$ is \emph{symmetric} if $S = S^{-1}$, that is if
$S$ is equal to $\{x^{-1}:x\in S\}$.
A very influential result \cite{hlf} of Helfgott 
(stated using the ``Gowers trick'' as in \cite[Corollary 2.6]{soda} 
is that 
there exists a $\delta>0$ such that if $S$ is a symmetric
generating subset of $G = \slp$ containing the identity
1 then the triple product 
$S^3$ is either equal to $G$ or has size at least $|S|^{1+\delta}$. This has immediate applications to the
diameter of Cayley graphs of $\slp$, and was also used by Bourgain and
Gamburd in \cite{bg} for the spectral gap of expander families of Cayley
graphs obtained from a Zariski-dense subgroup of  $\slz$ by reducing
modulo primes $p$. Recently, Helfgott and Seress generalised some of 
these ideas to prove a quasipolynomial bound on the diameter of the Cayley graphs 
of the alternating and symmetric groups \cite{HelfSer}. 

Helfgott's result can also be expressed in the
language of approximate groups, where a \emph{$k$-approximate group} $A$ is a
finite symmetric subset of a group $H$  such that $1 \in S$ and there exists
$X\subseteq H$ of size at most $k$ with $A^2\subseteq AX$. This immediately
implies that $|A^3|\leq k^2|A|$, so if $A$ is a generating $k$-approximate
group of $G=\slp$ then Helfgott's result tells us that either
$|A|\leq k^{2/\delta}$ or $|A|\geq |G|/k^2$.
Conversely, say there  exists an $N$ such that either $|A|\leq k^N$ or
$|A|\geq |G|/k^N$
for any generating $k$-approximate group $A$ of $G$. 
Then given $S$ a symmetric generating subset of $G$ containing 1, let
$k$ be such that $|S^3|=k^2|S|$. This implies (by Ruzsa's covering  lemma)
that $S^2$ is a $k^6$-approximate group. Here the Gowers trick tells us
that $S^3=G$ if $|S|\geq 2|G|^{8/9}$, so if the first case holds (namely
$|S|\leq |S^2|\leq k^{6N}$) we see that
$|S^3|/|S|=k^2\geq |S|^{1/3N}$. 
Now suppose that $|S^2|\geq |G|/k^{6N}$. If 
$k\leq 2^{-1/6N}|G|^{1/54N}$ then $S^6 = G$, and 
otherwise we can assume by the Gowers
trick again that $|S|<2|G|^{8/9}$, in which case 
$|S^3|/|S|>2^{-1/3N}|G|^{1/27N}$. Thus here $|S^3|>|S|^{1+\delta}$
provided that $2^{\delta}|G|^{8\delta/9}\leq 2^{-1/3N}|G|^{1/27N}$. This holds
for all but finitely many groups $G$ as long as we set
$\delta$ to be strictly less than $1/24N$, whereupon we can take
the minimum of this $\delta$ and
suitable values for the finitely many exceptions to obtain an overall
value of $\delta$ such that $|S^3| \geq |S|^{1+\delta}$ in all $G=\slp$. 
 
Not long after this, Helfgott's result was generalised to every family
of finite simple groups of Lie type with bounded Lie rank in \cite{ps},
with an equivalent version in \cite{bgt} expressed in terms of
approximate groups. 
Returning to $G = \slp$, in a recent paper \cite{kwl} by Kowalski the 
explicit lower
bound of $1/3024$ was shown to hold for $\delta$, by making
Helfgott's proof quantitative at every stage (this paper also
contains explicit versions of the two applications mentioned above).

Therefore define the
{\em Helfgott delta} in $G$ to be the supremum (which will be the
maximum) of the set
$\{\delta\in [0,\infty): |S^3|\geq |S|^{1+\delta}\}$ where $S$ ranges over
all symmetric generating sets of $\slp$ (over all primes $p$) that contain 1
and satisfy $S^3\neq G$. Given that this Helfgott $\delta$ must be at
least $1/3024$, one can also ask about a good upper bound, which is 
the topic of this paper. Establishing this has a different
flavour, because finding an explicit lower bound involves carefully inspecting
the whole of Helfgott's proof whereas we can be led by examples,
looking for such subsets $S$ where $\log(|S^3|)/\log(|S|)$ is as small as
possible. We shall take all logs to base $2$. 

The best upper bound we have found is
$(\log(7)-1)/6  \approx 0.3012$,  which comes from 
a symmetric subset $S$ containing 1 and generating $\slp$ that has size
64, whereas $|S^3|=224$. Moreover, such subsets can be found in $\slp$
for infinitely many primes $p$.

Our initial guess for subsets $S$ of small $\delta$
was that they should be as close to proper subgroups $H$ of $G$
as possible, so we started by looking at \emph{subgroup-plus-two
subsets}: these are sets of the form $H\cup\{x^{\pm 1}\}$ with 
$\langle H,x\rangle=\slp$. Note that as our subsets $S$ are symmetric,
 we need to add $x^{\pm 1}$ and not just $x$ to $H$. 
However it is a surprising result of this paper
that subgroup-plus-two subsets cannot be best possible as, regardless of
$H$ or $x$, they all produce a value of $\delta$ which is at least
$\log(3)/5 \approx 0.3169$.

We start by making some basic but useful observations in Sections 2 and 3.
In particular we show %by examining double cosets
that for a subset $S=H\cup\{x^{\pm 1}\}$ in a group
$K$, the size of $S^3$ is controlled both above and below
by the index of $x^{-1}Hx\cap H$ in $H$. In addition, % although $x\notin H$,
if $x^2\in H$ then $S^3 = H\cup HxH \cup x^{-1}Hx$, allowing us to obtain 
both tight upper and
lower bounds for $|S^3|$ in terms of $|H|$ and this index. In Theorem \ref{wlog} we 
show that, for general $x$,  if the expression for $S^3$ involves only one double coset $HxH$ then 
without loss of generality $x^2 \in H$. 
%The fact that only 
%two subgroups and one double coset appear in the expression for $S^3$ when $x^2 \in H$ suggests 
%that, on fixing $H$, taking $x^2\in H$ is likely to give a small value for 
%for $|S^3|$, assuming that one can find such an $x$ for which the double coset is not too large,
% and this is established in Theorem \ref{wlog}.
%, by
%proving that either $HxH$ and $Hx^{-1}H$ are disjoint or we can replace
%$x\in\slp \setminus H$ with $y\notin H$ but such that $y^2\in H$, thus creating
%a new subgroup-plus-two subset of the same size but without increasing the
%triple product.

Then in Section 3 we display a construction that gives strictly
better results than subgroup-plus-two subsets. We call such a subset a \emph{subgroup
plus coset core} and they are introduced
after Proposition \ref{nwst}, where it is shown that if 
$S=H\cup\{x^{\pm 1}\}$, where $x^2\in H$, then there is an
obvious subset of $S^3$ that can be added to $S$ without adding new elements
to $S^3$. Moreover Proposition \ref{More} shows that this method cannot be
improved: given any symmetric subset $T$ containing a subgroup-plus-two subset $S = H \cup \{x, x^{-1}\}$
with $x^2 \in H$ and $T^3=S^3 \neq \slp$, the set 
$T$ is a subset of the subgroup plus coset core of $H$ and $x$. This provides further heuristic evidence that 
subgroup plus coset cores are likely to lead to small values of $\delta$. 

Consequently, for a given subgroup $H$ of $G = \slp$ we have a good strategy
for finding suitable sets with small triple product, by looking for an
element $x\in G\setminus H$ with $\langle H,x\rangle=G$ and $x^2\in H$ but
with $x^{-1}Hx\cap H$ having index
as small as possible in $H$, then taking the
subgroup plus coset core  associated to $H$ and $x$.
However, whilst minimising this index is a good proxy for obtaining a
small $\delta$ when $H$ is fixed, it is no good as $H$ varies because
subgroups of very large order could give rise, on choosing $x$,
to a high index but still do better in terms of $\delta$ than if a low
index was obtained from a
smaller subgroup. Fortunately the subgroup
structure of $\slp$ is very well known and we can therefore go through
all subgroups.

In Sections 4 and 5 we consider cyclic and dihedral subgroups, as well as
those conjugate into the subgroup of upper triangular matrices. We show
that for the latter subgroups $H$, as well as for cyclic groups $H$, any 
subgroup-plus-two subset
or subgroup plus coset core  $S$ formed from $H$
satisfies $|S^3|>|S|^{3/2}$, with a lower bound for the
dihedral subgroups.

Also in Section 5 we look at what might be termed the 
{\em eventual Helfgott delta}: one might only be interested in $\delta>0$
such that either $S^3=\slp$ or $|S^3|\geq |S|^{1+\delta}$ for sufficiently
large symmetric generating sets $S$ containing 1. In \cite{kwl} it was
mentioned that this $\delta$ is at least $1/1513$ and here we give an example
to show that it is at most $1/2$. 

In Section 6 we examine the exceptional subgroups 
$2 \udot \Alt_4$, $2 \udot \Sym_4$ and  $2 \udot \Alt_5$.
Basic estimates allow us to eliminate $2 \udot \Alt_4$ and $2 \udot \Alt_5$, then we
consider $2 \udot \Sym_4$ in more detail. Our best value of
$\delta$ is obtained by taking $H = 2 \udot \Sym_4$,  of order 48,  and
an element $x$ with $x^2\in H$ and such that $x^{-1}Hx\cap H$ has index
3 in $H$. We then let $S =  H \cup (xH\cap Hx)$, 
of size 64. We thus need to find the exact value of
$|S^3|$ and this is done in Theorem \ref{s4} by considering a particular
characteristic 0 representation of $H$. In Corollary 
\ref{inf} we show that this subset exists in $\slp$ for infinitely many primes 
$p$
and in Corollary \ref{best} show that it provides a strictly lower value of
$\delta$ than the infimum over
all other subgroup plus coset cores and all
subgroup-plus-two subsets, thus proving that the latter type of subset cannot
give rise to the minimal $\delta$.
 
It remains to be seen whether our subset provides the smallest value of
$\delta$ over all symmetric generating subsets $S$ with 1 where $S^3\neq\slp$,
as obviously we have attempted to guess the form of the best
subsets (and indeed our initial guess of subgroup-plus-two subsets was
not correct). However in Section 7 we provide further evidence as to why
our example $S$ might be best possible, in that it is robust with respect
to small perturbations and can be regarded as a local minimum. By this we
mean that if we remove an element and its inverse from $S$, or we add an
element and its inverse to $S$, or we do both operations simultaneously,
then the resulting subset produces a value for $\delta$ that is greater than $0.3012$.

Finally, we briefly discuss a complete search we did through $\mathrm{SL}(2, 5)$ 
using \Magma \cite{MAGMA}, and the optimal $\delta$ (which is around $0.3925$) 
and corresponding sets $S$. The sets $S$ which minimise $\delta$ for $p = 5$
are \emph{not} 
subgroup plus coset cores, but their structure is a little opaque to us -- we describe one such $S$. 
Since we submitted this paper, Christopher Jefferson has shown that all such sets $S$ are equivalent up
to conjugacy in $\mathrm{GL}(2, 5)$. 

\section{Background material}

Given a finite subset $S$ of a group $G$, we write $|S|$ for the size
of $S$. We also write $S^n$ for the $n$-th setwise product of $S$,
so for instance $S^3=\{abc: a\in S, b\in S, c\in S\}$.

Given subgroups $H$ and $L$ of a group $G$, for each $x\in G$
we can form
the double coset $HxL=\{hxl: h \in H, l\in L\}$.
%; this is 
%both a union of left cosets of $L$ and also a union of right cosets of $H$.
We refer to \cite[Chapter II, Section 16 ]{led}
for the basic facts we will need. In
particular
\begin{prop}
\label{dbco}
(i) The group $G$ decomposes into a partition of double cosets
$Hx_iL$ for $i$ in some indexing set $I$.\\
(ii) (Frobenius) Let $d=|x^{-1}Hx\cap L|$. Then 
$$|HxL|=|H|\cdot |L|/d = |H| \cdot |L: x^{-1}Hx\cap L]. $$
\end{prop}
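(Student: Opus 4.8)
The plan is to treat the two parts separately: part (i) will follow from a routine equivalence-relation argument, and part (ii) from the orbit--stabiliser theorem applied to a natural action of $H\times L$ on $G$.

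For part (i), I would define a relation on $G$ by declaring $a\sim b$ whenever $b\in HaL$, and check that it is an equivalence relation whose classes are exactly the double cosets. Reflexivity is immediate since $1\in H$ and $1\in L$; symmetry follows because $b=hal$ forces $a=h^{-1}bl^{-1}$ with $h^{-1}\in H$ and $l^{-1}\in L$; and transitivity follows from closure of $H$ and $L$ under products, since $c=h_2(h_1al_1)l_2=(h_2h_1)a(l_1l_2)$. As the classes of any equivalence relation partition the underlying set, selecting one representative $x_i$ from each class yields the asserted partition of $G$ into the double cosets $Hx_iL$.

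For part (ii), I would realise $HxL$ as a single orbit and then count. Letting $H\times L$ act on $G$ by $(h,l)\cdot g=hgl^{-1}$ (a direct check confirms the identity and composition axioms), the orbit of $x$ is $\{hxl^{-1}:h\in H,\ l\in L\}=HxL$, so the orbit--stabiliser theorem gives $|HxL|=|H|\,|L|/|\mathrm{Stab}(x)|$. It then remains to compute the stabiliser and identify its size with $d$. A pair $(h,l)$ fixes $x$ precisely when $hxl^{-1}=x$, that is when $h=xlx^{-1}$, so a stabilising pair is determined entirely by its second coordinate $l$; the stabiliser is therefore in bijection with $\{l\in L:xlx^{-1}\in H\}=L\cap x^{-1}Hx$, a set of size $d$. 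Substituting $|\mathrm{Stab}(x)|=d$ gives $|HxL|=|H|\,|L|/d$, and Lagrange's theorem rewrites $|L|/d$ as the index $[L:x^{-1}Hx\cap L]$, yielding the second equality.

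The argument is elementary throughout, so there is no serious obstacle; the only point genuinely needing care is the bookkeeping in the stabiliser computation. Parametrising the stabiliser by its first coordinate $h$ instead would produce the \emph{conjugate} subgroup $H\cap xLx^{-1}$, and one would then have to conjugate by $x^{-1}$ to recover exactly the group $x^{-1}Hx\cap L$ that defines $d$. Choosing the $L$-coordinate parametrisation sidesteps this and matches the stated quantity $d$ on the nose, which is why I would organise the computation in that direction.
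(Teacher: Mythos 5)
Your proof is correct and complete. A point of comparison is slightly moot here: the paper gives no proof of this proposition at all, simply citing Ledermann (Chapter II, Section 16) for it as a standard fact, so any correct argument you supply is by definition ``a different route.'' Your argument is the standard modern one: realise $HxL$ as the orbit of $x$ under the action of $H\times L$ given by $(h,l)\cdot g = hgl^{-1}$, and compute the stabiliser. The classical Frobenius-style count found in textbooks such as Ledermann instead counts representations directly: $h_1xl_1 = h_2xl_2$ if and only if $h_2^{-1}h_1 = xl_2l_1^{-1}x^{-1} \in H\cap xLx^{-1}$, so each element of $HxL$ arises from exactly $|H\cap xLx^{-1}| = |x^{-1}Hx\cap L| = d$ pairs $(h,l)$, whence $|H|\,|L| = d\,|HxL|$ -- the same bookkeeping in different packaging. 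Your version buys one genuine economy that you did not exploit: the orbits of the $H\times L$ action are precisely the double cosets, so part (i) is subsumed by the observation that orbits partition $G$, making your separate equivalence-relation verification redundant (though harmless). Two small remarks: your stabiliser computation implicitly uses finiteness of $H$ and $L$, which is fine in context since the paper only applies this to finite groups; and your closing remark about the alternative $h$-parametrisation is accurate -- it yields the conjugate group $H\cap xLx^{-1}$, of the same order $d$ since conjugation by $x$ is a bijection, so either parametrisation would in fact do. Finally, note the displayed formula in the statement contains a typographical slip (an unmatched bracket in $|L: x^{-1}Hx\cap L]$, which should read $[L: x^{-1}Hx\cap L]$); your proof interprets it correctly as the index.
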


The following lemma is standard, see for example \cite[Satz II.8.27]{Huppert}.
\begin{lem}
\label{pslsubs}
Let $H$ be a subgroup of $\pslp$, $p \ge 5$. Then $H$ is one of:\\
(i) a subgroup of $C_p:C_{(p-1)/2}$, conjugate to the image of a group of upper triangular matrices;\\
(ii) a dihedral subgroup of the group $\dih{p-1}$ (of order $p-1$)\\
(iii) a subgroup of $\dih{p+1}$; \\
(iv) $\Sym_4$ (if and only if $p \equiv \pm 1 \bmod 8$) or $\Alt_{4}$;\\
(v) $\Alt_5$ (if and only if $p \equiv \pm 1 \bmod 10$).
\end{lem}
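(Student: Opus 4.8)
The plan is to prove this as a version of Dickson's classification, working from the natural $2$-transitive action of $G=\pslp$ on the $p+1$ points of the projective line $\mathbb{P}^1(\F_p)$. Here a point stabiliser is a Borel subgroup $B\cong C_p:C_{(p-1)/2}$, the stabiliser of two points is the cyclic split torus of order $(p-1)/2$, and there is a second conjugacy class of cyclic (nonsplit) tori of order $(p+1)/2$; a Sylow $p$-subgroup is cyclic of order $p$, generated by a transvection fixing a unique projective point. I would split the argument according to whether $p$ divides $|H|$. Note that the full group $H=G$ is the one case not appearing in the list, so strictly this is the classification of proper subgroups; it emerges naturally below.

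First suppose $p\mid |H|$ and let $P$ be a Sylow $p$-subgroup of $H$. If $P\nn H$ then $H$ fixes the unique projective point fixed by $P$, so $H\le N_G(P)=B$, giving case (i). Otherwise $H$ contains two distinct Sylow $p$-subgroups, fixing distinct points; a short argument shows that $H$ then contains the unipotent radicals of the stabilisers of at least two points and acts $2$-transitively, so that $H$ is generated by transvections and equals $G$. Thus in this case $H\le B$ or $H=G$.

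Now suppose $p\nmid |H|$, so that $H$ is a $p'$-group and every nonidentity element of $H$ is semisimple, fixing $0$ or $2$ points of $\mathbb{P}^1(\F_p)$. I would identify $H$ with a finite $p'$-subgroup of $\mathrm{PGL}(2,\overline{\F_p})$ and invoke the classification of such subgroups — equivalently, run the permutation-geometric argument that a group acting faithfully with every nontrivial element fixing at most two points is cyclic, dihedral, $\Alt_4$, $\Sym_4$ or $\Alt_5$. A cyclic $H$ consists of elements of a single torus and so lies in $C_{(p-1)/2}$ or $C_{(p+1)/2}$; a dihedral $H$ normalises a torus and so lies in its normaliser $\dih{p-1}$ or $\dih{p+1}$, yielding cases (ii) and (iii).

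It remains to pin down the exceptional types and their arithmetic conditions. Divisibility of $|G|=p(p-1)(p+1)/2$ together with the orders of the elements required forces the congruences: $\Alt_4$ occurs for every $p\ge 5$; $\Sym_4$ occurs exactly when $2$ is a square modulo $p$, i.e.\ when $p\equiv\pm1\bmod 8$; and $\Alt_5$ requires $5\mid |G|$, i.e.\ $p\equiv\pm1\bmod10$. I expect the case $p\nmid|H|$ to be the main obstacle, and within it the determination of these exact congruences, since this is where one must descend from $\overline{\F_p}$ to $\F_p$ and decide when the relevant element orders (and hence the subgroup itself) are defined over the prime field — a quadratic-residue computation on traces rather than a formal step. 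By contrast the case $p\mid|H|$ is routine once the Borel structure and transvection generation are in hand.
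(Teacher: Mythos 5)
The paper does not actually prove this lemma: it is quoted as standard, with a citation to Huppert's \emph{Endliche Gruppen I}, Satz II.8.27, i.e.\ Dickson's classification of the subgroups of $\mathrm{PSL}(2,q)$. So your proposal necessarily diverges from the paper in that you sketch an actual proof; the skeleton you give -- split on whether $p$ divides $|H|$; when it does, Sylow analysis giving either $H \le N_G(P) = B$ or two distinct full unipotent root subgroups generating $G$; when it does not, reduction to the classification of finite $p'$-subgroups of $\mathrm{PGL}(2,\overline{\F}_p)$ (cyclic, dihedral, $\Alt_4$, $\Sym_4$, $\Alt_5$), with cyclic and dihedral groups then placed inside tori and their normalisers $\dih{p-1}$, $\dih{p+1}$ -- is the standard route to Dickson's theorem and is sound. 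You are also right to flag that the statement implicitly concerns \emph{proper} subgroups (otherwise $H = G$ is missing, and for $p=5$ the ``if and only if'' in (v) would fail), and your observation that Lagrange alone forces the necessity of the congruences is correct: $24 \mid |\pslp|$ iff $16 \mid p^2-1$ iff $p \equiv \pm 1 \bmod 8$, and an element of order $4$ (resp.\ $5$) likewise forces $8 \mid p \mp 1$ (resp.\ $p=5$ or $5 \mid p \mp 1$).

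Two caveats. First, your ``equivalently'' gloss is mis-stated: it is \emph{false} that a faithful action in which every nontrivial element fixes \emph{at most} two points forces the cyclic/dihedral/$\Alt_4$/$\Sym_4$/$\Alt_5$ list -- $\pslp$ itself acting on $\mathbb{P}^1(\F_p)$ is a counterexample for $p \ge 7$ (unipotents fix one point, semisimple elements fix $0$ or $2$). The orbit-counting argument needs every nontrivial element to fix \emph{exactly} two points, which is precisely what your $p'$-hypothesis buys after passing to $\mathbb{P}^1(\overline{\F}_p)$, where every nontrivial semisimple element of $\mathrm{PGL}_2$ has two distinct fixed points; so the slip is reparable, but the passage to the closure is what makes it so, and your fallback of citing the classification of $p'$-subgroups of $\mathrm{PGL}(2,\overline{\F}_p)$ is the safe formulation. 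Second, you only establish the ``only if'' halves of (iv) and (v): the existence of $\Sym_4$ when $p \equiv \pm 1 \bmod 8$, of $\Alt_5$ when $p \equiv \pm 1 \bmod 10$, and of $\Alt_4$ for all $p \ge 5$ is asserted rather than proved. You correctly identify this as the residual work (reduction mod $p$ of the characteristic-zero representations, hinging on $\sqrt{2}$ or $\sqrt{5}$ lying in $\F_p$ -- exactly the mechanism the paper later uses for $2\udot\Sym_4$ in the proof of Theorem~\ref{s4}), but as written the proposal is an outline there, whereas the cited Satz II.8.27 carries full proofs of both directions.
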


We will also use the following well known facts:
\begin{prop} 
\label{wk} Let $p \ge 5$. \\
(i) The only involution of $\slp$ is $-I$.\\
(ii) The only proper non trivial normal subgroup of
$\slp$ is $\{\pm I \}$.\\
(iii) Let
$\pi:\slp\rightarrow\pslp$ be the natural homomorphism
and $H$ be a subgroup of $\slp$. 
Then $-I\in H$ if and only if $H$ is even. Furthermore, $-I \in H$
if and only if the index $[\pslp:\pi(H)] = [\slp:H]$.
\end{prop}
\begin{proof}
A direct calculation, setting $A = A^{-1} \in \slp$ where $p \neq 2$, proves (i).
By \cite[Satz II.6.13]{Huppert} the group $\pslp$ is simple,
and we can pull back normal subgroups to get (ii). Part (iii) 
then follows from the fact that the index of a subgroup $H$
will be preserved under $\pi$ if and only if $H$
contains the kernel $\{\pm I\}$.
\end{proof}

%Thus the majority of our subgroups $H$ of $\slp$
%contain $-I$, so we will
%write $H=2\udot L$ where $L=\pi(H)$ is the appropriate subgroup of
%$\pslp$. % as in Lemma~\ref{pslsubs}. 

\section{Potential subsets of small tripling}

Any proper subgroup $H$ of a finite group $G$ will be symmetric, contain the
identity $1$ and will satisfy $|H|=|H^3|$ $(=|H^n|)$ but of course will
not generate $G$. Moreover it is a straightforward exercise to show that
any subset $S$ of $G$ containing $1$ and with $|S|=|S^3|$ ($ = |S^2|$)
%(thus $|S|=|S^2|$ as $S\subseteq S^2\subseteq S^3$) 
 is a subgroup
of $G$. Consequently our first candidates for symmetric generating sets $S$
which have small tripling and which contain $1$ are the subgroup-plus-two 
subsets $H \cup \{ x^{\pm 1}\}$, because they can generate $\slp$ but
we would expect that most of the growth in the size of $S^3$
would be absorbed by $H$.
Note that we are adding two distinct elements because if $|x| = 2$,
then $\langle H,x\rangle=H\times C_2 \neq \slp$  by Proposition \ref{wk}.

In this section we first show in Theorem \ref{31} that
our best subgroup-plus-two subsets $S=H \cup\{ x^{\pm 1}\}$ are likely to occur when
$x^2\in H$. However we then find in this case that we can obtain
an improved value of $\delta$ by adding elements to $S$ without
increasing the size of $S^3$, as shown in Propositions \ref{nwst} and
\ref{More}.

Let us now fix a subgroup $H$ and look for good heuristics to minimize 
$|S^3|$,  where $S = H \cup \{x^{\pm 1}\}$. 
We can express $S^3$ as the union of the thirteen subsets
\begin{equation}\label{eqn:H}
H,Hx^{\pm 1}H,x^{\pm 2}H, Hx^{\pm 2}, x^{\pm 1}Hx^{\pm 1},
x^{\pm 3}.
\end{equation}
Notice that if $x^2\in H$ then $S^3 = H \cup HxH \cup x^{-1}Hx$. 
It would seem that this gives
rise to the smallest tripling of $H$-plus-two subsets.
The following result show that if $S^3$ contains only two double cosets 
$H$ and $HxH$ then without loss of generality $x^2 \in H$. 
%The following result allows us to assume this is true without loss
%of generality, provided we can eliminate the case where $HxH$ and
%$Hx^{-1}H$ are distinct double cosets.
\begin{thm} \label{31}
\label{wlog}
Let $H \le G = \slp$ and $x \in G$ be such that $S=H\cup\{x^{\pm 1}\}$
satisfies $\langle S\rangle= G$.
Then either $HxH$ and $Hx^{-1}H$ are disjoint or there exists
$y\in Hx$ with  $y^2\in H$, such that
$T=H\cup\{y^{\pm 1}\}$ satisfies $\langle T\rangle=G$
and $|T|=|S|$ but $T^3\subseteq S^3$.
\end{thm}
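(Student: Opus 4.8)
The plan is to eliminate the first alternative at once using the double coset partition, and then, in the remaining case, to produce $y$ by a short algebraic computation and verify the required properties one by one. First I would note that $x\notin H$, since otherwise $S=H$ and $\langle S\rangle=H\ne G$. By Proposition \ref{dbco}(i) the double cosets of $H$ in $G$ form a partition, so $HxH$ and $Hx^{-1}H$ are either disjoint or equal. If they are disjoint we are in the first case and there is nothing to prove, so from now on I would assume $HxH=Hx^{-1}H$, which in particular gives $x^{-1}\in HxH$.

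The heart of the argument is then a single manipulation. Writing $x^{-1}=axb$ with $a,b\in H$ and rearranging to $xbx=a^{-1}$ shows that $xbx\in H$. I would therefore set $y=bx$, which lies in $Hx$ as required, and compute $y^2=bxbx=b(xbx)=ba^{-1}\in H$. This is the key step, and it is clean once the equality of the two double cosets is in hand.

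It then remains to check the three claimed properties of $T=H\cup\{y^{\pm1}\}$. Generation is immediate: since $b\in H$ we have $\langle H,y\rangle=\langle H,bx\rangle=\langle H,x\rangle=G$. For the triple product I would invoke the observation made just before the theorem: because $y^2\in H$ we have $T^3=H\cup HyH\cup y^{-1}Hy$, and since $b\in H$ this equals $H\cup HxH\cup x^{-1}Hx$. Each of these three pieces is one of the thirteen subsets $H$, $Hx^{\pm1}H$, $x^{\pm1}Hx^{\pm1}$ whose union is $S^3$, so $T^3\subseteq S^3$.

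The one place needing care — and the main (if minor) obstacle — is the equality $|T|=|S|$, which amounts to showing $y\notin H$ and $y\ne y^{-1}$ (and likewise for $x$). The first is clear, since $y\in H$ would force $x=b^{-1}y\in H$. For the second I would rule out $y$ being an involution: by Proposition \ref{wk}(i) the only involution of $\slp$ is $-I$, and if $y=-I$ then $\langle H,y\rangle=H\langle -I\rangle$ would have order at most $2|H|<|G|$ (as $H$ is proper and $\slp$ has no subgroup of index $2$, being perfect), contradicting $\langle H,y\rangle=G$. Since $y\ne 1$ as well, we get $y^2\ne 1$, hence $y\ne y^{-1}$; the identical reasoning applied to $x$ gives $|S|=|H|+2=|T|$, completing the proof.
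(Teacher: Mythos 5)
Your proof is correct and follows essentially the same route as the paper: from $HxH=Hx^{-1}H$ you extract $y=bx\in Hx$ with $y^2=b(xbx)=ba^{-1}\in H$, which is exactly the paper's manipulation (there written as $y=h_2^{-1}x$ from $x=h_1x^{-1}h_2$), and your verification that $T^3=H\cup HyH\cup y^{-1}Hy=H\cup HxH\cup x^{-1}Hx\subseteq S^3$ matches the paper's. Your explicit justification of $|T|=|S|$ by ruling out $y=\pm I$ via Proposition~\ref{wk} is merely a more detailed version of the paper's one-line remark that $\langle H,y\rangle=G$ forces $y\neq y^{-1}$.
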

\begin{proof}
Assume that
$HxH=Hx^{-1}H$. Thus $x=h_1x^{-1}h_2$ where $h_1,h_2\in H$, so on setting
$y=h_2^{-1}x$ we find that $y^2$ is equal to $h_2^{-1}h_1x^{-1}h_2$
times $h_2^{-1}x$ and so is in $H$. Consequently
$T^3$ is made up of the union of $H,HyH$ and $y^{-1}Hy$ which are
equal to $H,HxH$ and $x^{-1}Hx$ respectively, thus $T^3\subseteq S^3$.
Moreover $\langle H,x\rangle=\langle H,y\rangle=G$ and so 
$y\neq y^{-1}$, giving $|T|=|S|$.
\end{proof}

However, it could be that there are elements $y\in S^3$ with the
property that $(S\cup\{y^{\pm 1}\})^3=S^3$, thus increasing $|S|$ but
keeping $|S^3|$ constant to obtain a smaller $\delta$. In the case where
$x^2\in H$ quite a few such elements can be added  in this
way. From now on, given  a subgroup-plus-two subset $H\cup\{x^{\pm 1}\}$,
we let $L$ be the intersection $H\cap x^{-1}Hx$.

\begin{prop} \label{nwst}
Let $H$ be a proper subgroup of the finite group $K$, let $S=H\cup\{x^{\pm 1}\}$ with
$x^2\in H$, and set $T=H\cup xL$. Then $|T|\geq |S|$ but
$T^3=S^3$.
\end{prop}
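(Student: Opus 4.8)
The plan is to treat the two assertions separately: a counting argument for $|T|\geq|S|$, and then a direct computation of $T^2$ and $T^3$ for the product identity, exploiting everything that $x^2\in H$ and $L=H\cap x^{-1}Hx$ force on the cosets involved. Throughout I would use the description $S^3=H\cup HxH\cup x^{-1}Hx$ recorded above (valid since $x^2\in H$).

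For the size inequality, first I would note that $xL\subseteq xH$ is disjoint from $H$ (as $x\notin H$), so $|T|=|H|+|L|$. Since $1\in L$ and also $x^{-2}\in L$ — because $x^{-2}\in H$ and $x(x^{-2})x^{-1}=x^{-2}\in H$ — the set $xL$ contains both $x$ and $x^{-1}$, whence $S\subseteq T$; moreover $|L|\geq 2$ whenever $x^2\neq 1$, giving $|T|=|H|+|L|\geq|H|+2=|S|$. The inclusion $S\subseteq T$ already yields $S^3\subseteq T^3$, so the remaining work is the reverse inclusion $T^3\subseteq S^3$.

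The two structural facts I would isolate are: $L\subseteq x^{-1}Hx$ gives $xL\subseteq Hx$ (while trivially $xL\subseteq xH$), so $xL\subseteq Hx\cap xH$; and $x^2\in H$ gives $Hx=Hx^{-1}$, $xH=x^{-1}H$, and hence $xHx=(xH)x=(x^{-1}H)x=x^{-1}Hx$. Using these I would first show $T^2=S^2=H\cup Hx\cup xH$: the cross term $HxL$ collapses to $Hx$ and $xLH$ to $xH$ (since $1\in L$ while $xL\subseteq Hx$, resp.\ $xL\subseteq xH$), and $xLxL\subseteq H$ because $xLx\subseteq Hx\cdot x=Hx^2\subseteq H$ and $L\subseteq H$. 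Then I would expand $T^3=T\cdot T^2=(H\cup xL)(H\cup Hx\cup xH)$ and check each piece lies in $S^3$: the part $H\cdot T^2$ gives $H\cup Hx\cup HxH$, the terms $xL\cdot H=xH$ and $xL\cdot xH=(xLx)H\subseteq H$ are absorbed, and the decisive term is $xL\cdot Hx=(xLH)x=(xH)x=xHx=x^{-1}Hx$. This exhausts $T^3$ inside $H\cup HxH\cup x^{-1}Hx=S^3$, so $T^3=S^3$.

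The main obstacle is exactly the last term $xL\cdot Hx$: a triple product with a full coset on each side could a priori be much larger than $x^{-1}Hx$, and its collapse depends on chaining $xLH=xH$ (so that only the single coset $xH$ survives on the left) with the identity $xH\cdot x=xHx=x^{-1}Hx$ — and it is precisely here that the hypothesis $x^2\in H$ is indispensable. Some care is also needed to verify $xLx\subseteq H$ (hence $xLxL\subseteq H$ and $xLxH\subseteq H$), ensuring that no stray elements outside $S^3$ are produced.
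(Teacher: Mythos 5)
Your proof is correct and takes essentially the same route as the paper: a direct expansion of the setwise products, using the coset identities forced by $x^2\in H$ and $L=H\cap x^{-1}Hx$ to collapse every term into $H\cup HxH\cup x^{-1}Hx=S^3$. The only cosmetic difference is that the paper first observes $x^{-1}Lx=x^{-1}Hx\cap x^{-2}Hx^2=L$ (so $xL=Lx$) and then substitutes $xL$ for $x$ in its thirteen-subset list for $S^3$, whereas you work from the slightly weaker containment $xL\subseteq Hx\cap xH$ and organise the check via $T^2$ --- the same computation in different packaging, with your write-up usefully making explicit the details (including $x^{-2}\in L$, hence $S\subseteq T$) that the paper leaves to the reader.
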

\begin{proof}
Now,  $x^{-1}Lx=x^{-1}Hx\cap x^{-2}Hx^2=L$ so $xL = Lx$. We
look at the subsets listed in Equation~\ref{eqn:H}, 
but with $xL = Lx$ in place of $x$, and notice that 
the expressions simplify to give $T^3 = H \cup HxH \cup x^{-1}Hx$. 
% (and $x^{-1}$ by $Lx^{-1}$, but $x^2\in L$
%implies that $x^{-1}\in xL$ so $|xL|\geq 2$). As $xL=Lx$ and
%$x^{-1}L=Lx^{-1}$, we can move any appearance of $L$ until it lies next
%to $H$, whereupon it is absorbed as $L\subseteq H$. This covers all 13
%cases except $x^3$ and $x^{-3}$ which become $x^3L$ and $x^{-3}L$, but
%$x^2\in L$ means that these are both equal to $xL$ which is already in
%$xH$.
\end{proof}

Note that $xL=xH\cap Hx$ and that $x^{-1}\in xL$ if and only if $x^2\in H$,
so $x^2\notin H$ implies that $H\cup xL$ is not a symmetric subset.
Moreover, if $x^2\in L$ then $g^2\in L$ for all $g\in xL$.
Consequently, if $x^2\in H$ then we will call
$H\cup (xH\cap Hx)$ a
{\it subgroup plus coset core}.
We now check that there are no further elements that can be
added to a subgroup-plus-two subset $S$ in a group $K$ without increasing
the size of $S^3$, assuming that $S^3 \neq K$. 
%. Although this cannot be true in full
%generality because we can have $S$ a subgroup-plus-two subset with
%$S^3=\slp$, so that here we could increase $S$ to the whole group, it
%turns out that this is the only situation that needs to be ruled out.

\begin{prop} \label{More}
Let $K$ be a finite group, let $H$ be a non-normal subgroup of $K$, let $x \in K$ such that 
$\langle H, x \rangle = K$ and $x^2 \in H$ with $|x| > 2$,  and define $S = \{H, x^{\pm 1}\}$. 
If $S^3 \neq K$, 
then the largest subset $T$ of $K$ satisfying  $S^3= T^3$ with $T = T^{-1}$ 
and $S \subset T$ is $T = H \cup (Hx \cap xH)$. 
\end{prop}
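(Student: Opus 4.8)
The plan is to prove the two halves separately: that $T=H\cup(Hx\cap xH)$ does satisfy the stated conditions, and that no admissible set is strictly larger. The first half is essentially Proposition~\ref{nwst}, which gives $T^3=S^3$; that $T$ is symmetric when $x^2\in H$ is the remark following \ref{nwst}, and $S\subseteq T$ is clear. For maximality I would first make a reduction that removes all interaction between added elements. If $U$ is any symmetric set with $S\subseteq U$ and $U^3=S^3$, then for each $g\in U$ we have $S\cup\{g,g^{-1}\}\subseteq U$, so $(S\cup\{g,g^{-1}\})^3\subseteq U^3=S^3$, and the reverse inclusion is trivial; thus every element of $U$ is \emph{individually addable}. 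Hence it suffices to prove that the set of singly-addable elements
$$A=\{g\in K:(S\cup\{g,g^{-1}\})^3=S^3\}$$
is contained in $H\cup(Hx\cap xH)$: any admissible $U$ then lies in $A\subseteq T$, and since $T$ itself is admissible it is the largest such set. Throughout I use $S^3=H\cup HxH\cup x^{-1}Hx$ and the collapses $x^{-1}H=xH$, $Hx^{-1}=Hx$, $xHx=x^{-1}Hx$, all valid because $x^2\in H$.

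I would next show $A\subseteq H\cup HxH$. Let $g\in A$. Since $1\in H$, for all $h_1,h_2\in H$ the element $h_1gh_2$ is a product of three elements of $S\cup\{g,g^{-1}\}$, so $HgH\subseteq S^3$. As the $(H,H)$-double cosets partition $K$, the single double coset $HgH$ is one of them; if it is neither $H$ nor $HxH$ it is disjoint from both, whence $HgH\subseteq x^{-1}Hx$ and so $|HgH|\le|x^{-1}Hx|=|H|$. By Proposition~\ref{dbco}(ii), $|HgH|=|H|\,[H:g^{-1}Hg\cap H]\ge|H|$, with equality only if $g\in N_K(H)$; equality is forced, and then $Hg\subseteq x^{-1}Hx$ gives $H\subseteq x^{-1}Hx$, i.e.\ $x^{-1}Hx=H$. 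But $\langle H,x\rangle=K$ with $H$ non-normal forces $x\notin N_K(H)$, a contradiction. Hence $g\in H\cup HxH$.

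The remaining task is to show $A\cap HxH\subseteq Hx\cap xH$. Since $x\notin H$ we have $HxH\cap H=\emptyset$, so such a $g$ lies outside $H$. Using the collapses above, $g\in xH\Leftrightarrow x^{-1}g\in H\Leftrightarrow xg\in H$ and $g\in Hx\Leftrightarrow gx\in H$; thus $g\in Hx\cap xH$ exactly when $xg\in H$ and $gx\in H$. The data available are that $xg,gx$ and their one-sided translates $H(xg),(xg)H,H(gx),(gx)H$ all lie in $S^3$ (each being a product of at most three factors), together with the locations $xg\in(x^{-1}Hx)H$ and $gx\in H(x^{-1}Hx)$, which follow from $g\in HxH$ and $xHx=x^{-1}Hx$.

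The hard part is this final step, and it is exactly where the hypothesis $S^3\ne K$ is essential. Replacing $g$ by $g^{-1}$ if needed, one assumes $gx\notin H$ and must exhibit a product of length at most three in $g^{\pm1},x^{\pm1},H$ that escapes $S^3$. The difficulty is that the single-$x$ products do not suffice: in the cross case $g\in xH\setminus Hx$ one computes that $xg\in H$ and $gx,xgx,x^{-1}gx,\dots$ all remain inside $S^3$, so one is driven to products that use $g$ twice, such as $g^2$, $gxg$ and $g^2x$. Each of their double cosets is again forced into $H\cup HxH\cup x^{-1}Hx$, and the plan is to show that these constraints, propagated using $\langle H,x\rangle=K$ and $x^2\in H$, fill up all double cosets and so force $S^3=K$, contrary to hypothesis; in each position of $g$ (in $xH\setminus Hx$, in $Hx\setminus xH$, or in neither) one pins down a single escaping product. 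Organising this finite but delicate case analysis is the main obstacle. That the hypothesis $S^3\ne K$ cannot be dropped is shown by $K=\Sym_4$, $H=\dih8$ and $x$ a $4$-cycle with $x^2\in H$, where $S^3=K$ and every element of $HxH\setminus(Hx\cap xH)$ is freely addable.
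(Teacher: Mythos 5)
Your reduction to singly-addable elements is sound, and your proof that $A \subseteq H \cup HxH$ is correct and essentially matches the paper's first step (the paper reaches the contradiction slightly more directly: $HgH \subseteq x^{-1}Hx$ together with $|HgH| \geq |H| = |x^{-1}Hx|$ forces $HgH = x^{-1}Hx \ni 1$, which is impossible since $HgH$ is disjoint from $H$; your normaliser detour arrives at the same place). But the final step -- showing $A \cap HxH \subseteq Hx \cap xH$, which is the heart of the proposition and the only place the hypothesis $S^3 \neq K$ enters -- is not proved. You offer only a plan (force the double cosets of $g^2$, $gxg$, $g^2x$ into $S^3$ and ``fill up all double cosets'') and you concede that organising the case analysis is ``the main obstacle.'' That is a genuine gap: the statement is simply not established.

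Moreover, the gap stems from a miscount of the available products. You list as data $H(xg)$, $(xg)H$, $H(gx)$, $(gx)H$ -- all with $H$ on the outside -- and conclude that products using $g$ only once cannot suffice. That conclusion is wrong: you have omitted the middle-$H$ triple products $xHg$ and $gHx$, and these are exactly what the paper uses. Its argument runs as follows. Write $HxH$ as the disjoint union of right cosets $Ht_1, \ldots, Ht_k$ with $t_i = xh_i$, and let $K$ act on the right cosets of $H$. Then $\{H\}$ and $\{Ht_1, \ldots, Ht_k\}$ are $H$-orbits and $Hg \subseteq HxH$; since $S^3 \neq K$ there are cosets outside $H \cup HxH$, and since $\langle H, g\rangle = \langle H, x\rangle = K$ acts transitively, $g$ must move some $Ht_i$ with $i \geq 1$ to a coset outside $H \cup HxH$. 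Now $xh_i g \in T^3 = S^3 = H \cup HxH \cup x^{-1}Hx$ while $xh_i g \notin H \cup HxH$, so $xh_i g \in x^{-1}Hx = xHx$ (using $x^2 \in H$), whence $g \in Hx$. The mirror argument on left cosets gives $g \in xH$. This one transitivity observation replaces your entire projected case analysis; your reduction and the containment $A \subseteq H \cup HxH$ can stand unchanged, and your closing example showing that $S^3 \neq K$ cannot be dropped is correct but peripheral.
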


\begin{proof}
Let $y \in T \setminus H$. We shall show that $y \in Hx \cap xH$.

Our assumption that $x^2 \in H$ implies that $S^3 = H \cup HxH \cup x^{-1}Hx$. 
Now,  $T^3=S^3$ implies that $HyH \subset S^3$, and $HyH$ is an 
$(H, H)$-double coset that is not equal to $H$. If $HyH \neq HxH$ then 
$HyH$ has trivial intersection with both $H$ and $HxH$, so
$HyH \subseteq S^3$ implies that $HyH \subset x^{-1}Hx$, a 
contradiction since $|HyH| \geq |H|$ and $1 \not\in HyH$. 
%This tells us
%that $HyH = HxH$ because otherwise they are disjoint, but 
%$|HyH|\geq
%2|H|>|H\cup x^{-1}Hx|$ by Proposition \ref{ind12},
%so $HyH$ would not fit into $S^3$ but is a
%subset of $T^3$.
So $HyH = HxH$, and in particular, $\langle H, y\rangle = \langle H, x \rangle = K$. 

Let the right coset representatives of $H$ in $HxH$ be 
$1 = t_0, x = t_1, xh_2 = t_2, \ldots, t_k$. If
$S^3 \neq K$, then there are right cosets of $H$ in $K$ that do not lie in 
$H \cup HxH$. %denote these by $Ht_{k+1}, \ldots, Ht_m$. 

Consider the action of $K$ on the right cosets of $H$, and identify the coset $Ht_i$ with $i$. 
Then $\{0\}$ and $\{1, \ldots, k\}$ 
are $H$-orbits in this action, and $0^y \in \{1, \ldots, k\}$,  
so $y$ must map at least one element of 
$\{1, \ldots, k\}$ outside of $\{0, \ldots, k\}$ because $y$ and $H$
generate $K$.
That is, there exists an $i \in \{1, \ldots, k\}$ such that 
$t_i y = xh_iy \not\in H \cup HxH$. Now, 
$t_i y = xh_i y \in S^3$ implies that $x h_i y \in xHx$, and so $y \in Hx$.

Similarly, let the left coset representatives of $H$ in $HxH$ be 
$s_1 = x, s_2 = h_2^\prime x,  \ldots, s_k = h_k'x$.
The group $K$ also acts on the set of all right $H$-cosets, via $(s_iH)^g = g^{-1}s_iH$,
and there exists an 
$i \in \{1, \ldots, k\}$ such that 
$(s_iH)^{y^{-1}} = ys_i H \not\in H \cup HxH$. If $ys_i \in S^3$ 
then $yh'_ix \in xHx$ so $y \in xH$.
%Thus $y \in xH \cap Hx$, as required.
\end{proof}

%Now fix our subgroup $H$ (or even just $|H|$). 
%Our strategy for finding $S = H \cup \{x^{\pm 1}\}$ or 
%$S = H \cup xL$ with as low a value of $\delta$
%as possible is now clear. We first find an element $x\in \slp \setminus H$ 
%such that $\langle H,x\rangle=\slp$ %(which will automatically hold if $H$
%is maximal) and such that 
%with $c = [H : L] \geq 2$ %of $L=x^{-1}Hx\cap H$ in $H$ is
%as small as possible, giving us our starting set $S=H\cup\{x^{\pm 1}\}$.
%We then check if $HxH=Hx^{-1}H$. If not (which will always be the case
%if $c=2$ by Proposition \ref{ind12}\,(ii)) then 
%$|S^3|\geq (2c+1)|H|$. If so, then we know by Theorem \ref{wlog} that we
%can change $x$ to form a new $H$ plus two subset $S$ 
%where in addition $x^2\in H$ and $S^3$ is no bigger
%than it was. Moreover, we can now say that $(c+2-1/c)|H|\geq |S^3|\geq (c+1)|S|$
%so we have decent bounds either way for the size of $|S^3|$, and we finally
%enlarge $S$ from $H\cup\{x^{\pm 1}\}$ to $H\cup xL$ so that $|S|=(1+1/c)|H|$.
%In particular, if there exist elements $x\in\slp \setminus H$ with 
%$[H:x^{-1}Hx\cap H]=3$ and $x^2\in H$ then the best result will be
%obtained from one or more of these $x$. Summarising:

We now present two results which we will use to calculate or bound values of $\delta$ for 
various explicit subsets $S$. First, in Proposition~\ref{ind12} we collect information about what can happen 
when $HxH$ is a union of few $H$-cosets. 

\begin{prop} \label{ind12}
Let $H$ be a proper subgroup of the finite group $K$, with $\langle H,x\rangle=K$.\\
(i) If $|HxH|=|H|$ then $H$ is normal in $K$, thus
$K \neq \slp$.\\
(ii) If $|HxH|=2|H|$ and $HxH = Hx^{-1}H$ then
$L=H\cap x^{-1}Hx$ is normal in $K$, thus again
$K \neq \slp$. 
\end{prop}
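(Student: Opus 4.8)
The plan is to use the Frobenius double coset formula of Proposition~\ref{dbco}(ii) to convert the size hypotheses into index conditions, to deduce that a suitable subgroup is normalised by both $H$ and $x$, and finally to invoke Proposition~\ref{wk} on the normal subgroup structure of $\slp$.

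For part (i), I would apply Proposition~\ref{dbco}(ii), which gives $|HxH| = |H|[H : x^{-1}Hx \cap H]$. The hypothesis $|HxH| = |H|$ forces $[H : x^{-1}Hx \cap H] = 1$, so $H \subseteq x^{-1}Hx$; since $H$ and $x^{-1}Hx$ are finite of equal order this is an equality, i.e.\ $x$ normalises $H$. As $H$ normalises itself and $K = \langle H, x\rangle$, we get $H \trianglelefteq K$. If $K$ were $\slp$ then by Proposition~\ref{wk}(ii) the proper subgroup $H$ would lie in $\{\pm I\}$, whence $K = \langle H, x\rangle$ would be abelian, contradicting the non-abelianity of $\slp$; so $K \neq \slp$.

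For part (ii), Frobenius gives $[H : L] = 2$, so $L \trianglelefteq H$; applying it to $Hx^{-1}H = HxH$ likewise shows $M := H \cap xHx^{-1}$ has index $2$ in $H$, so $M \trianglelefteq H$. The key step is to show that $x$ also normalises $L$, and here I would exploit the hypothesis $HxH = Hx^{-1}H$: since $x \in Hx^{-1}H$ we may write $x = a x^{-1} b$ with $a, b \in H$, and substituting into $x^{-1}Hx$ while using $a^{-1}Ha = H$ yields $x^{-1}Hx = b^{-1}(xHx^{-1})b$. Intersecting with $H$ and pulling the conjugation by $b \in H$ outside then gives $L = H \cap x^{-1}Hx = b^{-1}(H \cap xHx^{-1})b = b^{-1}Mb = M$, the last equality because $M$ is normal in $H$. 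Since $xLx^{-1} = xHx^{-1} \cap H = M = L$, the element $x$ normalises $L$, and therefore $L \trianglelefteq \langle H, x\rangle = K$.

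It remains to deduce $K \neq \slp$, and this is the step I expect to require the most care. If $K = \slp$ then $L$ is a proper normal subgroup, so by Proposition~\ref{wk}(ii) either $L = 1$ or $L = \{\pm I\}$. In the first case $|H| = 2$ forces $H = \{\pm I\}$ by Proposition~\ref{wk}(i), making $K$ abelian, a contradiction. The case $L = \{\pm I\}$ is the genuine obstacle, since then $|H| = 4$ and $H$ need not be central; here I would pass to the quotient $\overline{K} = K/L$, which is $\slp$ or $\pslp$. In $\overline{K}$ the image $\overline{H}$ has order $2$, the intersection $\overline{H} \cap \overline{x}^{-1}\overline{H}\,\overline{x}$ is trivial, and the relation $\overline{H}\,\overline{x}\,\overline{H} = \overline{H}\,\overline{x}^{-1}\overline{H}$ persists. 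Writing out the four right cosets of $\overline{H}$ in this common double coset forces $\overline{x}^{-1} \in \overline{H}\,\overline{x}\,\overline{H}$, which in every case yields either $\overline{x}^2 \in \overline{H}$, making $\langle \overline{H},\overline{x}\rangle$ cyclic, or that the involution of $\overline{H}$ inverts $\overline{x}$, making $\langle \overline{H},\overline{x}\rangle$ dihedral. This contradicts $\langle \overline{H},\overline{x}\rangle = \overline{K}$ being $\slp$ or $\pslp$, neither of which is cyclic or dihedral for $p \ge 5$; hence $K \neq \slp$.
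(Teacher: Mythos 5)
Your proof is correct, and it keeps the paper's overall skeleton (Frobenius converts the size hypotheses into index conditions, one shows $x$ normalises $L$, then $L \le \{\pm I\}$ by Proposition~\ref{wk} and a cyclic-or-dihedral contradiction is extracted in the quotient), but you execute the two key steps of part (ii) by a different route. Where the paper invokes Theorem~\ref{wlog} to replace $x$ by $y \in Hx$ with $y^2 \in H$ --- which keeps $H$, $HxH$, $x^{-1}Hx$ and hence $L$ unchanged, and makes $y^{-1}Ly = y^{-1}Hy \cap H = L$ immediate --- you keep the original $x$ and instead derive $L = H \cap xHx^{-1}$ directly from the factorisation $x = ax^{-1}b$ together with the normality in $H$ of both index-$2$ intersections; this in effect inlines the computation hidden inside Theorem~\ref{wlog}, is fully self-contained, and proves the slightly stronger fact that the \emph{original} $x$ normalises $L$ rather than a modified coset representative. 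Correspondingly, in the endgame the paper can simply quote $x^2 \in H$ to split into the cases $\overline{x}^2 = 1$ (dihedral) and $\overline{x}^2$ generating $\overline{H}$ (cyclic), whereas you recover the same dichotomy by enumerating the possible positions of $\overline{x}^{-1}$ inside $\overline{H}\,\overline{x}\,\overline{H}$; both work, and your version is the natural price of avoiding the replacement of $x$. Three cosmetic slips, none of which affects validity: the double coset $\overline{H}\,\overline{x}\,\overline{H}$ has four \emph{elements} but only two right cosets of $\overline{H}$; since you are assuming $K = \slp$ and $L = \{\pm I\}$ is exactly the kernel, the quotient is $\pslp$, not ``$\slp$ or $\pslp$''; and in the subcase $\overline{x}^{-1} = \overline{x}$ you do get $\overline{x}^2 \in \overline{H}$, but $\langle \overline{H}, \overline{x}\rangle$ is then dihedral (two involutions) rather than cyclic --- harmless, since your contradiction excludes both possibilities anyway.
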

\begin{proof} The first condition
implies that $x^{-1}Hx=H$ by Proposition \ref{dbco} (ii).
Thus $H$ is normalised by $\lgl H, x \rangle = K$. If $K=\slp$
then $H=\{I\}$ or $\{\pm I\}$ %or $\slp$ itself  - no, H is proper
by Proposition~\ref{wk}\,(ii).
But  then $H\cup\{x\}$ will not generate $\slp$. % in the first two cases.

As for (ii), if $|HxH|=2|H|$ then $[L:H] = 2$,  so $L \unlhd H$.
In addition,  $HxH=Hx^{-1}H$, so if $x^2\notin H$ then by Theorem \ref{wlog} 
we can change $x$ if necessary, but 
keeping the same $H,HxH$ and $x^{-1}Hx$, and thus the same $L$.
As the new and old $x$ are in the same right coset of $H$, we still have
$\langle H,x\rangle=K$ but
$x^{-1}Lx=x^{-1}Hx\cap H=L$ as now
$x^2\in H$, thus $L \unlhd K$. 

If $K = \slp$ then $L \le \langle -I \rangle$. If $L=\{I\}$ then we have the same contradiction as above,
whereas if $L=\{\pm I\}$ then let $\overline{H}$ and $\overline{x}$ be their images
in $\pslp$. Now $\overline{H} \cong C_2$ and $x^2\in H$, so either
$\overline{x}^2$ is the identity in $\pslp$ so that $\langle \overline{H},\overline{x}\rangle$ is a dihedral
group, or $\overline{x}^2$ generates $\overline{H}$ and $\langle \overline{H},\overline{x}\rangle$ is cyclic.
Either way $\langle \overline{H},\overline{x}\rangle\neq\pslp$ so 
$\langle H,x\rangle\neq\slp$.
\end{proof}

Since $x\notin H$, the sets
$H$ and $HxH$ are disjoint. 
Let $c = [H:H\cap x^{-1}Hx]$, and set $S = H \cup \{x^{\pm 1}\}$. 
Then from Proposition \ref{dbco}\,(ii), 
we deduce that
$|HxH|+|H|=(c+1)|H|\leq |S^3|$. Moreover, by
Theorem \ref{wlog}, without loss of generality
either $x^2\in H$, in which case $S^3 = H \cup HxH \cup x^{-1}Hx$, 
and so $|S^3|\leq (c+2-1/c)|H|$, or $x^2 \not\in H$, in which case
$HxH\cup Hx^{-1}H\cup H$ is a disjoint union, and 
$|HxH|+|Hx^{-1}H|+|H|=(2c+1)|H|\leq |S^3|$.

The following technical result, which follows from the preceding paragraph,
 will be used repeatedly to show that $\delta = 
(\log_2(7) - 1)/6$ is minimal over all subgroup-plus-two subsets and subgroup plus coset cores.

\begin{lem}\label{lem:c_estimate}
Let $H$ be a non-normal subgroup of a finite group $K$, let $x \in K$ be such that 
$\langle H, x \rangle = K$ and $|x| > 2$, 
let $L = H \cap x^{-1}Hx$ and $c = [H:L]$. If $HxH \neq Hx^{-1}H$ then let $S = H \cup 
\{x, x^{-1}\}$; otherwise assume that $x^2 \in H$ and let $S = H \cup xL$. \\
(i) If $HxH \neq Hx^{-1}H$ (which will hold when $c=2$ by Proposition \ref{ind12}~(ii) 
if $K=\slp$) then $|S^3| \geq (2c+1)H$.\\
(ii) Otherwise,  $(c+2-1/c)|H|\geq |S^3|\geq (c+1)|H|$ and $|S| = (1 + 1/c)|H|$.
\end{lem}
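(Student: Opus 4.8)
The plan is to derive Lemma~\ref{lem:c_estimate} directly from the paragraph immediately preceding it, which already contains all the required estimates; the lemma is essentially a repackaging of those bounds into the two cases on which later sections will repeatedly rely. So I would first record the standing hypotheses ($H$ non-normal, $\langle H,x\rangle=K$, $|x|>2$, $L=H\cap x^{-1}Hx$, $c=[H:L]$) and then split into the two mutually exclusive cases exactly as the lemma statement does.

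\textbf{Case (i): $HxH\neq Hx^{-1}H$.} Here $S=H\cup\{x^{\pm1}\}$. First I would note that when $c=2$ and $K=\slp$, Proposition~\ref{ind12}~(ii) tells us that $HxH=Hx^{-1}H$ is impossible (since otherwise $L$ would be normal, contradicting $K=\slp$), which justifies the parenthetical remark that $c=2$ forces $HxH\neq Hx^{-1}H$. Then, since $HxH\neq Hx^{-1}H$, these two $(H,H)$-double cosets are disjoint from each other and both disjoint from $H$ (as $x\notin H$). By Proposition~\ref{dbco}~(ii) each of $HxH$ and $Hx^{-1}H$ has size $c|H|$ (the index $[H:x^{-1}Hx\cap H]=c$, and the same value governs $Hx^{-1}H$ by symmetry). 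Hence $H\cup HxH\cup Hx^{-1}H\subseteq S^3$ is a disjoint union of total size $(2c+1)|H|$, giving $|S^3|\geq(2c+1)|H|$. This reproduces the last displayed inequality of the preceding paragraph.

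\textbf{Case (ii): $HxH=Hx^{-1}H$ with $x^2\in H$.} Here $S=H\cup xL$, a subgroup plus coset core. For the lower bound I would again use that $H$ and $HxH$ are disjoint with $|HxH|=c|H|$, so $|S^3|\geq(c+1)|H|$. For the upper bound I would invoke Proposition~\ref{nwst}: since $x^2\in H$, the set $T=H\cup xL$ satisfies $T^3=S_0^3$ where $S_0=H\cup\{x^{\pm1}\}$, and by Theorem~\ref{wlog} (the $x^2\in H$ branch) we have $S_0^3=H\cup HxH\cup x^{-1}Hx$. Now $|H\cup HxH|=(c+1)|H|$ since they are disjoint, and $x^{-1}Hx$ overlaps this union in $x^{-1}(Hx\cap xH)x$; the worst case gives at most $(c+2-1/c)|H|$ new elements once one accounts for $|x^{-1}Hx\cap H|=|H|/c$. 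For the size of $S$ itself, $|xL|=|L|=|H|/c$ and $xL$ is disjoint from $H$ (as $x\notin H$), so $|S|=|H|+|H|/c=(1+1/c)|H|$.

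The main obstacle is bookkeeping rather than conceptual: in Case (ii) I must be careful that the term $x^{-1}Hx$ contributes exactly $|H|-|H|/c$ genuinely new elements beyond $H\cup HxH$, which relies on correctly identifying the overlap $x^{-1}Hx\cap(H\cup HxH)$ and using $|x^{-1}Hx\cap H|=|L|=|H|/c$. Since the preceding paragraph has already carried out precisely this computation to obtain the bound $(c+2-1/c)|H|$, I would simply cite that derivation rather than repeat it. The only other subtlety is verifying the disjointness claims, but these follow immediately from $x\notin H$ together with the case hypothesis $HxH\neq Hx^{-1}H$ (in Case (i)) or from the explicit double-coset expression for $S^3$ (in Case (ii)).
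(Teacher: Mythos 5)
Your proposal is correct and follows essentially the same route as the paper, which itself states that the lemma ``follows from the preceding paragraph'': disjointness of the double cosets plus Proposition~\ref{dbco}\,(ii) gives the lower bounds, and Theorem~\ref{wlog} together with Proposition~\ref{nwst} and the identity $S^3 = H \cup HxH \cup x^{-1}Hx$ (for $x^2 \in H$) gives the upper bound and $|S|$. The only blemishes are cosmetic: the overlap of $x^{-1}Hx$ with $H$ is $x^{-1}(Hx \cap xH)$, not $x^{-1}(Hx\cap xH)x$ as you wrote, and it contributes \emph{at most} (not ``exactly'') $|H| - |H|/c$ new elements, which is all the upper bound needs.
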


However it is less clear
how to proceed once $|H|$ varies. For instance,
given $H \le \slp$ with $|H|=12$ and $x$ as in Lemma~\ref{lem:c_estimate}\,(ii) with
$c=3$, the set $S=H\cup xL$ has size 16 and $48\leq|S^3|\leq 56$, giving
a value for $\delta$ of between $\log(48)/4-1 \approx 0.3962$
and $\log(56)/4-1 \approx 0.4518$ which we
might think is nice and low. However, given another subgroup
$K$ of order 144 and $z$ with $z^2 \in K$ where the index 
$[K: z^{-1}Kz\cap K]$ is as much as 6, we find that $|S|=168$ and
$|S^3|\leq (8-1/6)\cdot 144=1128$, giving $\delta \leq \log(1128)/\log(168)-1
\approx 0.3716$ which beats the lower estimate above.

However, the subgroups of $\slp$ are well studied, so in the next two
sections we shall look at the infinite families of subgroups in $\slp$, 
where we
are able to get stronger lower bounds on $\delta$ for subgroup-plus-two
subsets
and subgroup plus coset cores than would be implied by
the estimates above. We then look in Section 6
at the exceptional subgroups and their
small index subgroups, which is where our lowest value of $\delta$ shall
be obtained.

We finish this section with two useful inequalities which will come into
play when we consider specific subgroups of $\slp$.
\begin{lem} \label{log}
If $k\geq 1$ and $l\geq 2$
then $f_l(k)=\log(lk(k+1))/\log(l(k+1))$
and $g_l(k)=\log(lk(2k+1))/\log(l(k+1))$ are both increasing in $k$.
\end{lem}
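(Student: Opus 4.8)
The plan is to prove monotonicity of each function by showing that its logarithmic derivative (or a discrete difference) is positive. Since both $f_l$ and $g_l$ are quotients of logarithms, the cleanest approach is to fix $l \ge 2$ and treat $k$ as a continuous variable ranging over $[1,\infty)$, then differentiate. Writing $f_l(k) = \log(lk(k+1))/\log(l(k+1))$, I would set $u(k) = \log(lk(k+1)) = \log l + \log k + \log(k+1)$ and $v(k) = \log(l(k+1)) = \log l + \log(k+1)$, so that $f_l = u/v$. By the quotient rule, $f_l$ is increasing exactly when $u'v - uv' > 0$, i.e.\ when $u'/u > v'/v$ (using that $u,v>0$ for $k\ge 1$, $l\ge 2$). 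The same setup handles $g_l$ with $u$ replaced by $w(k) = \log(lk(2k+1)) = \log l + \log k + \log(2k+1)$.

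First I would record the derivatives in a common framework. Note $v'(k) = 1/((k+1)\log 2)$ once we account for $\log$ being base $2$ (the constant $1/\log 2$ cancels throughout, so I can work with natural logs or simply track the factor). We have $u'(k) = 1/k + 1/(k+1)$ and $w'(k) = 1/k + 2/(2k+1)$, while $v'(k) = 1/(k+1)$. The key observation is that $u$ and $w$ each differ from $v$ by the single positive term $\log k$, and their derivatives exceed $v'$ by $1/k$ (for $u$) and by $1/k + 1/(2k+1)$ (for $w$), both strictly positive. So in each case $u' > v' > 0$ and $u > v > 0$; the inequality $u'v - uv' > 0$ then reduces to showing $u'v > uv'$, equivalently $u'/u > v'/v$.

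The heart of the argument is therefore the inequality $u'/u > v'/v$, which I would prove by comparing cross-multiplied terms. Since $u = v + \log k$ and $u' = v' + 1/k$, we have $u'v - uv' = (v' + 1/k)v - (v + \log k)v' = v/k - v'\log k$. Substituting $v = \log(l(k+1))$ and $v' = 1/((k+1)\log 2)$, positivity of $u'v - uv'$ becomes
\begin{equation}
\frac{\log(l(k+1))}{k} > \frac{\log k}{(k+1)\log 2},
\end{equation}
and after clearing denominators this is a concrete elementary inequality in $k$ and $l$ that holds for all $k \ge 1$, $l \ge 2$, using that $\log(l(k+1)) \ge \log(2(k+1)) > \log k$ and that $(k+1)/k > 1$. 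The analogous computation for $g_l$ gives $w'v - wv' = v(1/k + 1/(2k+1)) - v'\log(2k+1/(k+1))$-type terms, which I would reduce to the same kind of elementary estimate.

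The main obstacle I anticipate is bookkeeping rather than conceptual difficulty: one must verify the final elementary inequalities uniformly in both parameters $k \ge 1$ and $l \ge 2$, and the $g_l$ case is slightly more delicate because the $\log(2k+1)$ term grows faster than $\log(k+1)$, so I would need to check that the extra positive contribution $1/(2k+1)$ to $w'$ dominates the extra size of $w$ over $v$. As a robustness check, I would confirm monotonicity directly at the boundary $k=1$ and verify that the derivative expressions are manifestly positive as $k \to \infty$, where both quotients tend to $1$ from below.
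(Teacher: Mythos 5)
Your proposal is correct and takes essentially the same route as the paper, which simply writes $f_l(k)=1+\log(k)/\log(l(k+1))$ and differentiates --- exactly your reduction of $u'v-uv'>0$ to $(k+1)\log(l(k+1))>k\log k$, with $g_l$ handled the same way (the paper splits $g_l$ as $\log(lk)/\log(l(k+1))+\log(2k+1)/\log(l(k+1))$, a cosmetic difference from your $w=v+\log\bigl(k(2k+1)/(k+1)\bigr)$). Three trivial slips, none affecting the argument: since the base-change constant multiplies both $u'$ and $v'$ it should cancel entirely from your displayed inequality (as literally written with a one-sided factor $\ln 2$ it would fail for large $k$, but under the paper's base-$2$ convention it is the correct $(k+1)\log(l(k+1))>k\log k$); the excess is $w'-v'=1/k+1/\bigl((2k+1)(k+1)\bigr)$ rather than $1/k+1/(2k+1)$; and $f_l,g_l$ tend to $2$, not $1$, as $k\to\infty$.
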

\begin{proof}
We can write $f(k)=1+\log(k)/\log(l(k+1))$ then take derivatives and
rearrange to find that $f'(k)>0$.
We then do the same for 
$g(k)=\log(lk)/\log(l(k+1))+\log(2k+1)/\log(l(k+1))$.
\end{proof}

\section{Cyclic and Dihedral subgroups}

We start with a general lemma which comes in useful for cyclic
groups.
\begin{lem} \label{unin}
Suppose that $H$ is a proper subgroup of a finite group $K$
and that $L=x^{-1}Hx\cap H$ for some $x\in K$. If $L$ is the only
subgroup of $H$ with that index then $L$ is normalised by $x$.
\end{lem}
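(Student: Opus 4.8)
The plan is to manufacture a second subgroup of $H$ having the same index as $L$ by conjugating, and then invoke the uniqueness hypothesis directly. The natural candidate is the conjugate $xLx^{-1}$, and the whole argument turns on showing that this conjugate actually lies back inside $H$.

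First I would compute the conjugate explicitly. Since $L = x^{-1}Hx \cap H$, conjugating by $x$ gives
$$xLx^{-1} = x(x^{-1}Hx \cap H)x^{-1} = H \cap xHx^{-1}.$$
The right-hand side is an intersection of subgroups of $K$ that is visibly contained in $H$, so $xLx^{-1}$ is in fact a subgroup of $H$. This is the key structural point: a priori the conjugate $xLx^{-1}$ is only a subgroup of $K$, but the defining formula $L = x^{-1}Hx\cap H$ forces it back inside $H$ after conjugation.

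Next I would compare indices. Conjugation is an automorphism of $K$, so $|xLx^{-1}| = |L|$; as both $xLx^{-1}$ and $L$ are subgroups of $H$, this yields $[H : xLx^{-1}] = [H : L]$. Thus $xLx^{-1}$ is a subgroup of $H$ with exactly the index of $L$. By the hypothesis that $L$ is the only subgroup of $H$ with that index, we conclude $xLx^{-1} = L$, which is precisely the assertion that $x$ normalises $L$.

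I do not expect a serious obstacle here; the entire content is the identity $xLx^{-1} = H \cap xHx^{-1}$, which is what guarantees the conjugate lands inside $H$ and so makes the uniqueness hypothesis applicable. The only point needing care is verifying that this intersection genuinely sits inside $H$ (it does, being an intersection with $H$), after which the order-preservation of conjugation, the resulting equality of indices, and the appeal to uniqueness are all immediate.
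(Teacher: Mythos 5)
Your proof is correct and is essentially the paper's argument viewed from the other side of the conjugation: the paper observes that $L = x^{-1}Hx \cap H$ sits inside $x^{-1}Hx$ with the same index and applies the uniqueness (transported by conjugation) inside $x^{-1}Hx$ to get $L = x^{-1}Lx$, while you compute $xLx^{-1} = H \cap xHx^{-1} \le H$ and apply the uniqueness hypothesis directly in $H$. The two arguments are mirror images resting on the same one-line idea; yours has the marginal advantage of invoking the hypothesis in $H$ itself rather than in its conjugate.
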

\begin{proof}
If $L$ has order $l$ and is the only subgroup of index $i$ in $H$ then
$x^{-1}Lx$ is the only subgroup of index $i$ in the order $li$
group $x^{-1}Hx$. But $L$ is also an order $l$ subgroup of  
$x^{-1}Hx$, thus it is of index $i$ and so $L=x^{-1}Lx$.
\end{proof}

Let us now consider the case where  $H=\langle z\rangle$, and 
$S= H \cup \{x^{\pm 1}\}$ or $S = H \cup (xH \cup Hx)$. 
We can certainly find $x\in G = \slp$ with $\langle H\cup\{x\} \rangle = G$, because 
$G$ is $2$-generated for all $p$. However we
will now see that the possibilities for $|S^3|$ are limited. 

\begin{prop} \label{cyc}
Let $H = \langle z \rangle \le G = \slp$, and let
$S=H\cup\{x^{\pm 1}\}$,  or let $x^2\in H$ and $S=H\cup (xH\cap Hx)$. 
If $\langle S \rangle=\slp$ then $|S^3|\geq |S|^{1+\delta}$, where
$\delta=\log(3)/3 \approx 0.5283$. 
\end{prop}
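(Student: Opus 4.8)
The plan is to exploit the fact that a cyclic group has a unique subgroup of each order, so that Lemma~\ref{unin} pins down the intersection $L=H\cap x^{-1}Hx$ completely, and then to feed the resulting value of $c=[H:L]$ into the estimates of Lemma~\ref{lem:c_estimate} together with the monotonicity of Lemma~\ref{log}.

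First I would note that $\langle S\rangle=\slp$ forces $|x|>2$: the only involution of $\slp$ is $-I$ by Proposition~\ref{wk}(i), and $\langle H,-I\rangle$ is a proper subgroup, so $x\neq I,-I$. Next comes the key structural step. Since $H=\langle z\rangle$ is cyclic it has a unique subgroup of each index, so $L$ is the unique subgroup of its index in $H$, and Lemma~\ref{unin} gives that $x$ normalises $L$; as $H$ is abelian it normalises $L$ too, whence $L\unlhd\langle H,x\rangle=\slp$ and so $L\in\{\{I\},\{\pm I\}\}$ by Proposition~\ref{wk}(ii). I then split on the parity of $n=|H|$. If $n$ is odd then $-I\notin H$, so $-I\notin L$ and $L=\{I\}$, giving $c=n$; if $n$ is even then $H$, and likewise $x^{-1}Hx$, contains the unique involution $-I$, so $-I\in L$ and $L=\{\pm I\}$, giving $c=n/2$. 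In particular $|L|\le 2$ always, so whenever the coset core $H\cup(xH\cap Hx)=H\cup xL$ is defined (which requires $x^2\in H$, forcing $H$ even and $L=\{\pm I\}$) it equals $H\cup\{x^{\pm1}\}$; thus in every admissible case $|S|=n+2$.

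With $c$ determined I would run the bookkeeping of Lemma~\ref{lem:c_estimate}. When $n=2m$ is even and $HxH=Hx^{-1}H$, I may assume $x^2\in H$ by Theorem~\ref{wlog}, and Proposition~\ref{ind12} excludes $c=1,2$, so $m=c\ge 3$ and $|S^3|\ge(c+1)|H|=2m(m+1)$ with $|S|=2(m+1)$; the ratio $\log|S^3|/\log|S|$ is exactly $f_2(m)$, which by Lemma~\ref{log} is least at $m=3$, where $f_2(3)=\log(24)/\log(8)=1+\log(3)/3$ (the extremal case $|S|=8$, $|S^3|=24$). When $n=2m$ is even and $HxH\neq Hx^{-1}H$, then $c=m\ge 2$ and $|S^3|\ge(2c+1)|H|=2m(2m+1)$, so the ratio is $g_2(m)\ge g_2(2)=\log(20)/\log(6)>1+\log(3)/3$. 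Both even subcases are therefore immediate from Lemma~\ref{log}.

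The remaining case, $n$ odd, is where I expect the main obstacle, since $L=\{I\}$ falls outside the range $l\ge 2$ of Lemma~\ref{log}. Here I would first show $HxH\neq Hx^{-1}H$: otherwise Theorem~\ref{wlog} replaces $x$ by some $y\in Hx$ with $y^2\in H$ and the same $L$, but then $y^2\in H\cap y^{-1}Hy=L=\{I\}$ forces $|y|\le 2$, contradicting $\langle H,y\rangle=\slp$. Thus Lemma~\ref{lem:c_estimate}(i) applies with $c=n\ge 3$, giving $|S^3|\ge n(2n+1)\ge 2n^2$ and $|S|=n+2$. Since $1+\log(3)/3<2$, it suffices to verify the elementary inequality $(n+2)^{1+\log(3)/3}\le 2n^2$ for all odd $n\ge 3$; writing $(n+2)^{1+\log(3)/3}=(n+2)^2(n+2)^{-(1-\log(3)/3)}$, this reduces to $\tfrac12(1+2/n)^2\le(n+2)^{1-\log(3)/3}$, which holds because the left side is at most $25/18$ for $n\ge 3$ while the right side is at least $5^{1-\log(3)/3}>2$. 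Collecting the three cases gives $\log|S^3|/\log|S|\ge 1+\log(3)/3$ throughout, i.e. $|S^3|\ge|S|^{1+\delta}$ with $\delta=\log(3)/3$, with equality approached only in the even subcase at $m=3$.
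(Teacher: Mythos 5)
Your proof is correct and follows the paper's strategy in its essentials: Lemma~\ref{unin} forces $L\leq\{\pm I\}$, Theorem~\ref{wlog} reduces the case $HxH=Hx^{-1}H$ to $x^2\in H$ (and shows it collapses unless $L=\{\pm I\}$), and Lemma~\ref{lem:c_estimate} together with the monotonicity in Lemma~\ref{log} does the bookkeeping, with the extremal value $f_2(3)=1+\log(3)/3$ arising at $|S|=8$, $|S^3|\geq 24$ exactly as in the paper. You deviate in two local but genuine ways, both valid. First, you pin down $L$ exactly by the parity of $n$ ($L=\{I\}$ for $n$ odd, $L=\{\pm I\}$ for $n$ even), where the paper only uses the uniform bound $[H:L]\geq n/2$; consequently in the odd case your sharper value $c=n$ falls outside the range $l\geq 2$ of Lemma~\ref{log}, and you must verify an elementary inequality by hand, whereas the paper folds the odd case into the disjoint-double-coset case by applying $g_2$ at the fractional argument $k=n/2\geq 3/2$ (legitimate since Lemma~\ref{log} only requires $k\geq 1$), obtaining $g_2(3/2)=1+\log(12/5)/\log(5)>1+\log(3)/3$ with no extra computation. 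Your route buys a slightly stronger bound ($|S^3|\geq n(2n+1)$ rather than $n(n+1)$) at the cost of the ad hoc estimate $\tfrac12(1+2/n)^2\leq(n+2)^{1-\log(3)/3}$, which I checked and which is fine. Second, to exclude $n=4$ in the case $HxH=Hx^{-1}H$ you invoke Proposition~\ref{ind12}\,(ii) (ruling out $c=2$ there), whereas the paper argues directly that $y^2=-I$ would make the image of $\langle H,y\rangle$ in $\pslp$ dihedral; your alternative is arguably cleaner, since Proposition~\ref{ind12}\,(ii) was set up for precisely this situation. Your observation that $x^2\in H$ forces $x^2=-I$ and hence $xL=\{x^{\pm 1}\}$, so that coset cores coincide with subgroup-plus-two subsets and $|S|=n+2$ in every admissible case, matches the paper's corresponding step.
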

\begin{proof}
Set $L=x^{-1}Hx\cap H$, then  $L \unlhd 
H$, and Lemma~\ref{unin} implies
that $x^{-1}Lx = L$. This forces $L$ to be a proper normal
subgroup of $G$, so $L \leq \{\pm I\}$ by Proposition~\ref{wk}, and setting $n = |H|$ we see that
$[H:L] \ge  n/2$. 

First suppose
that $HxH = Hx^{-1}H$. By Theorem~\ref{wlog}
there exists $y\in Hx$ such that $y^2\in H$, but then $y^2\in
y^{-1}Hy=x^{-1}Hx$, thus $y^2\in x^{-1}Lx=L$. If $L = I$ then $y=-I$, but then
$\langle H,y\rangle=\langle H,x\rangle \neq G$, a contradiction.
Thus $L=\{\pm I\}$ and $yL=\{y^{\pm 1}\}$
so we can regard subgroup-plus-two subsets and subgroup plus coset
cores as equal, and $|S| = n+2$. 
%Suppose that $|z| = n$, so $|S|=n+2$.
Then Lemma~\ref{lem:c_estimate}\,(ii) bounds $|S^3|\geq (n/2 + 1)n,$
where $n$ is even and at least 4.
But $y^2=-I$ so that if $n=4$ then the
image of $\langle H,y\rangle$ in $\pslp$ is dihedral.
So $n \ge 6$ and we
are done if $(n/2+1)n\geq (n+2)^{1+\delta}$, which by taking logs and setting
$l=2$ and $k=n/2$ is equivalent to claiming that $f_2(k)\geq 1+\delta$.
But as $k\geq 3$ we get $f_2(k)\geq f_2(3)=1+\log(3)/3$ by
Lemma~\ref{log}, so this value of $\delta$ works.

Next suppose that $HxH \cap Hx^{-1}H = \emptyset$, so that  $|S| = n+2$.
Then  Lemma~\ref{lem:c_estimate}\,(i) bounds % We bound $[H:L]\geq n/2$, so that
$|S^3| \geq (n+1)n.$ Thus we can again set $l=2$ and $k=n/2$ for
$k\geq 3/2$ (as $n\geq 3$)
in Lemma~\ref{log} for $g_2(k)$, meaning that we require
$g_2(k)\geq 1+\delta$. But we know $g_2(k)\geq g_2(3/2)=1+\log(12/5)/\log(5)
> 1+\log(3)/3$.
\end{proof} 

We can now move on to the dihedral subgroups arising in 
Proposition~\ref{wk}, so that $-I \in H$.
Indeed if the image in $\pslp$ is the dihedral group $\dih{2n}$ of order $2n$ then
$H$ has the presentation
\[\langle z,w| z^{2n}, w^4, z^n=w^2, w^{-1}zw=z^{-1}\rangle\] 
with $w^2$ being equal to $-I$,
which is known as the generalized quaternion group $Q_{4n}$.
We can mostly proceed by reducing to the cyclic case, although
the estimates obtained for $\delta$ will necessarily be lower.

\begin{prop} \label{dih}
Let $H=\langle z,w\rangle \cong 2 \udot \dih{2n}$ be a subgroup of
$G = \slp$, and let 
 $S=H\cup\{x^{\pm 1}\}$, or let $x^2\in H$ and $S=H\cup (xH\cap Hx)$.
If $\langle S \rangle=G$, then $|S^3|\geq |S|^{1+\delta}$ where
$\delta=\log(3)/5  \approx 0.3169$.
\end{prop}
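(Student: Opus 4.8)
The plan is to mimic the cyclic argument of Proposition~\ref{cyc}, the one extra ingredient being that we must pass from $H$ to its cyclic maximal subgroup $C=\lgl z\rgl$, of order $2n$ and index $2$ in $H$, before we can invoke Lemma~\ref{unin}. As before set $L=x^{-1}Hx\cap H$ and $c=[H:L]$, and write $M=x^{-1}Cx\cap C$. Since $C$ is cyclic it has a unique subgroup of each index, so Lemma~\ref{unin} applies to $C$ and shows that $x$ normalises $M$. Moreover $H$ normalises $M$: the abelian group $C$ certainly does, and $w$ inverts $z$ and hence fixes every subgroup $M=\lgl z^j\rgl$ of $C$. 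Thus $M$ is normalised by $\lgl H,x\rgl=G$, so $M\unlhd G$, and Proposition~\ref{wk} forces $M\le\{\pm I\}$, i.e. $|M|\le 2$.

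The next step turns this bound on $|M|$ into a bound on $|L|$. Writing $H=C\cup Cw$ and intersecting with $C$ gives $L\cap C=x^{-1}Hx\cap C=(x^{-1}Cx\cap C)\cup(x^{-1}Cwx\cap C)$; the first piece is $M$ and the second is either empty or a single coset of $M$, so $|L\cap C|\le 2|M|\le 4$. Since $[L:L\cap C]\le[H:C]=2$ we obtain $|L|\le 8$, equivalently $c=|H|/|L|\ge n/2$. As $H$ is even we always have $-I\in L$, so in fact $2\le|L|\le 8$.

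With $2\le|L|\le 8$ in hand the estimates close up as in Proposition~\ref{cyc}, using Lemma~\ref{lem:c_estimate} together with the monotonicity in Lemma~\ref{log}. If $HxH\ne Hx^{-1}H$ then $S=H\cup\{x^{\pm1}\}$, $|S^3|\ge(2c+1)|H|=(2c+1)c|L|$ and $|S|=|H|+2\le(c+1)|L|$, so $\log|S^3|/\log|S|\ge g_{|L|}(c)\ge g_8(2)=\log 80/\log 24>1+\log(3)/5$, using that $g_l$ increases in $c$ (Lemma~\ref{log}) and, as one checks directly, decreases in $l$. If instead $HxH=Hx^{-1}H$ then by Theorem~\ref{wlog} we may assume $x^2\in H$, so $S=H\cup xL$ is the subgroup plus coset core; here Proposition~\ref{ind12}\,(ii) rules out $c=2$, giving $c\ge3$, and $|S^3|\ge(c+1)|H|=c|S|$ with $|S|=(c+1)|L|\le 8(c+1)$. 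Hence $\log|S^3|/\log|S|\ge 1+\log c/\log(8(c+1))=f_8(c)\ge f_8(3)=1+\log(3)/5$ by Lemma~\ref{log}. The common extremal configuration is $|H|=24$, $|L|=8$, $c=3$, giving $|S|=32$ and $|S^3|\ge 96=3|S|$, which is exactly where $\delta=\log(3)/5$ is attained.

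The one genuinely new difficulty, and the reason the bound is weaker than in the cyclic case, is that a generalised quaternion group has several subgroups of a given index (for instance its three Sylow $2$-subgroups, none normal, when $4\mid|H|$), so Lemma~\ref{unin} cannot be applied to $L$ directly; routing the normality argument through the cyclic subgroup $C$ is what repairs this, at the cost of the extra factors of $2$ from $[H:C]=2$ and from the coset bookkeeping, which is what turns $\log(3)/3$ into $\log(3)/5$. The remaining work is purely routine: verifying the coset count $|L\cap C|\le 2|M|$, and checking by hand the finitely many small $n$ (where the dihedral image is degenerate) that are not covered by the monotonicity of $f_8$ and $g_8$.
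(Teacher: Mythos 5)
Your proposal is correct and follows essentially the same route as the paper's proof: pass to the cyclic subgroup $C=\langle z\rangle$, apply Lemma~\ref{unin} to $M=x^{-1}Cx\cap C$ to force $M\le\{\pm I\}$, deduce $|L|\le 8$ via the index-two steps, and then close with Lemma~\ref{lem:c_estimate} and the monotonicity of $f_l$ and $g_l$, with $f_8(3)=1+\log(3)/5$ as the extremal value. Your only departures are cosmetic — deriving $|L\cap C|\le 2|M|$ by the coset decomposition $H=C\cup Cw$ rather than the paper's repeated use of the fact that intersecting preserves index bounds, and collapsing the three cases $l=2,4,8$ into one via the (correct, easily checked) observation that $f_l$ and $g_l$ decrease in $l$, where the paper evaluates each separately.
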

\begin{proof} 
The group $C=\langle z\rangle$
of order $2n$ has index 2 in $H$, so in analogy with the proof above
we set $M=x^{-1}Cx\cap C$ and obtain in the same way that $x^{-1}Mx = M$. However any subgroup of $C$ is normalised by $H$, so once again we conclude that $M=\{I\}$ or $\{\pm I\}$. But
$-I \in C$, so $M = \{\pm I\}$.

Now if $A,B,D$ are subgroups of $G$ and $A$ is contained in $B$ with
index $i$ then $A\cap D$ has index at most $i$ in $B\cap D$. 
As  $[H : C] = 2$, and $[x^{-1}Hx : x^{-1}Cx] = 2$ also, 
the group $M$ has index at most 2 in
$x^{-1}Hx\cap C$, which has index at most 2 in $L=x^{-1}Hx\cap H$, thus
$|L|$ is $2, 4$, or $8$. Let $c = [H:L]$.
%Moreover $h^2\in C$ for any $h\in H$, thus if
%$l\in L$ with  $l \neq \pm I$ then $l^2$ is in $C$
%and in $x^{-1}Cx$, making $l$ have order 4. In particular 
%$|L|$ is equal to 2, 4 or 8.

First suppose that $HxH = Hx^{-1}H$, so by
Theorem~\ref{wlog} there exists $y\in Hx$ with $y^2\in L$, and $c \ge 3$ by Proposition~\ref{ind12}.
By Lemma~\ref{lem:c_estimate}\,(ii), the set $S$ has size at most $(c+1)|L|$ whereas
$|S^3|\geq (c+1)|H|=c(c+1)|L|$. We can apply Lemma~\ref{log}
for $l=|L|=2,4,8$ by taking $k=c=2n,n$ and $n/2$,
respectively,  giving 
$f_2(k)\geq f_2(4)$, $f_4(k)\geq f_4(3)$ and $f_8(k)\geq f_8(3)$.
Of these the lowest value is $f_8(3)=\log(96)/5=1+\log(3)/5
\approx 1.3169$. 

Finally if $HxH$ and $Hx^{-1}H$ are disjoint then %, although we might have
%$c=2$, 
Lemma~\ref{lem:c_estimate}\,(i) gives $|S^3|\geq c(2c+1)|L|$ so we again set
$l=|L|=2,4,8$ and $k=c=2n,n$ and $n/2$
to obtain
$g_2(k)\geq g_2(2)$, $g_4(k)\geq g_4(2)$ and $g_8(k)\geq g_8(2)$, all
of which lie comfortably above $1+\delta$.
\end{proof}

\section{Triangular subgroups}\label{sec:tri}

The group $\slp$ has a subgroup
\[U=\{\sma{cl} \alpha&\beta\\ 0&\alpha^{-1}\fma: \alpha\in\Z_p^*,
\beta\in\Z_p\}\]
which is maximal and has order $p(p-1)$. In this section we will
assume that $H$ is any subgroup of $U$ and that $x\notin U$. This assumption is valid because any other subgroup of $\slp$ of 
order dividing $p(p-1)$ is conjugate to a subgroup of $U$, and the size of triple products is preserved by conjugation. 
% (or else $\langle x,H\rangle\leq U$).
% If $H$ were merely conjugate via
%$g\in\glp$ to a subgroup of $U$ then this would also be valid, because
%if $S$ is a subset of $\slp$ containing $H$ then $|(g^{-1}Sg)^3|=|S^3|$
%with $g^{-1}Hg$ contained in $U$ and $g^{-1}Sg$ also in $\slp$. 

In this and the next section we will need some additional notation for matrices in $\slp$. We write 
$u(\alpha, \beta)$ for $\sma{cl}\alpha&\beta\\0&\alpha^{-1}\fma\in U$, write $\diag[\alpha, \beta]$ for the diagonal matrix with entries $\alpha, \beta$, and write $\antidiag[\alpha, \beta]$ for the antidiagonal matrix with $\alpha$ in row $1$. 

\begin{thm} \label{uptr}
Let $H$ be a subgroup of $U$. 
If $S=H\cup\{x^{\pm 1}\}$, or $x^2\in H$ and $S=H\cup (xH\cap Hx)$, and 
$\langle S \rangle=\slp$, then $|S^3|>|S|^{3/2}$.
\end{thm}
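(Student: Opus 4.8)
The plan is to exploit the fact that $x\notin U$ forces the index $c=[H:H\cap x^{-1}Hx]$ to be at least $p$, which is far stronger than the generic bound $c\ge 2$, and then to feed this into Lemma~\ref{lem:c_estimate}. Write $P=\{u(1,\beta):\beta\in\Z_p\}$ for the unipotent radical of $U$; it is the unique (hence normal) Sylow $p$-subgroup of $U$, has order $p$, and $U/P$ is cyclic of order $p-1$. I first dispose of the case $p\nmid|H|$: here $H\cap P=\{I\}$, so $H$ embeds in $U/P$ and is cyclic, whence Proposition~\ref{cyc} already gives $|S^3|\ge|S|^{1+\log(3)/3}>|S|^{3/2}$. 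So I may assume $p\mid|H|$, in which case the order-$p$ Sylow subgroup of $H$ must equal $P$, giving $P\le H$ and $n:=|H|=pd$ with $d=[H:P]\mid(p-1)$; in particular $n\le p(p-1)$, so $p^2\ge n+p$.

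The key step is to show $c\ge p$. Set $L=H\cap x^{-1}Hx$; I claim $L\cap P=\{I\}$. Indeed any $g\in P\cap x^{-1}Hx$ satisfies $g\in P$ and $xgx^{-1}\in H\le U$, so $g\in P\cap x^{-1}Ux$. But $x\notin U=N_{\slp}(P)$ (standard: there are $p+1$ Sylow $p$-subgroups, so $|N_{\slp}(P)|=p(p-1)=|U|$), hence $x^{-1}Px\ne P$; and since every $p$-element of the Borel $x^{-1}Ux$ lies in its unique Sylow $p$-subgroup $x^{-1}Px$, we get $P\cap x^{-1}Ux\le P\cap x^{-1}Px=\{I\}$. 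Thus $L$ meets $P$ trivially, so $L$ embeds in $H/P\cong C_d$ and $|L|\mid d$. Therefore $c=[H:L]=pd/|L|\ge pd/d=p$.

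It then remains to substitute $c\ge p$ into the two regimes of Lemma~\ref{lem:c_estimate} and verify elementary inequalities. For a subgroup-plus-two subset $|S|=n+2$ and $|S^3|\ge(c+1)n\ge(p+1)n$; squaring, $(p+1)^2n^2\ge(n+3p+1)n^2>(n+2)^3$, where the last inequality reduces to $(3p-5)n^2>12n+8$ and follows from $p\ge 5$, so $|S^3|>|S|^{3/2}$. For a subgroup plus coset core $|S|=(1+1/c)n$ and $|S^3|\ge(c+1)n$, and the desired inequality reduces, after squaring and cancelling, to $c^3>(c+1)n$; since $t\mapsto t^3/(t+1)$ is increasing, $c^3/(c+1)\ge p^3/(p+1)>p(p-1)\ge n$, the middle inequality being equivalent to $p^3>p^3-p$. (When $HxH$ and $Hx^{-1}H$ are disjoint one even has the stronger $|S^3|\ge(2c+1)n$ from Lemma~\ref{lem:c_estimate}\,(i), but the bound above already suffices; in the complementary case $HxH=Hx^{-1}H$ one first applies Theorem~\ref{wlog} to arrange $x^2\in H$, which preserves $L$ and keeps $x\notin U$.)

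I expect the only real content to be the index estimate $c\ge p$: it is the geometric fact that distinct Borel subgroups of $\slp$ share no nontrivial unipotent element, dressed up as $N_{\slp}(P)=U$ together with the disjointness of distinct order-$p$ subgroups. Once that is in hand the remaining work is purely numerical, the only mild care being at the smallest value $n=p=5$, where crude intermediate estimates (for instance passing through $p>\sqrt n$) can fail, so one should compare $(p+1)^2n^2$ with $(n+2)^3$ directly.
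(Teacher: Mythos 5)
Your proof is correct, and while it shares the paper's overall skeleton (peel off the cyclic case via Proposition~\ref{cyc} when $H$ meets the unipotent subgroup trivially, otherwise feed an index bound into Lemma~\ref{lem:c_estimate} and finish with elementary numerics), your treatment of the central index estimate is genuinely different. The paper works with explicit matrix entries: writing $x$ with lower-left entry $c \neq 0$, a trace argument shows any $u(\alpha,\beta)$ conjugated into $U$ by $x$ lands on $u(\alpha^{\pm 1},\eta)$, and the vanishing of the $(2,1)$-entry $(\alpha-\alpha^{-1})dc - \beta c^2$ pins down exactly one $\beta$ per admissible $\alpha$; since $N \leq H$ (your $P$ is the paper's $N$) this yields the exact value $|H \cap x^{-1}Hx| = |H|/p$, so $[H:L] = p$ on the nose and $|HxH| = p|H|$. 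You instead argue conceptually: $x \notin U = N_{\slp}(P)$, distinct Borel subgroups share no nontrivial unipotent element (unique normal Sylow $p$-subgroups of prime order meeting trivially), hence $L \cap P = \{I\}$, $L$ embeds in the cyclic quotient $H/P$, and $c \geq p$. Your version is softer and more portable -- it uses no coordinates and works verbatim over larger fields -- and it obliges you to handle the possibility $c > p$, which you do correctly via the monotonicity of $t^3/(t+1)$; your direct comparison of $(p+1)^2 n^2$ with $(n+2)^3$ in the subgroup-plus-two case is in fact slightly more careful than the paper's blanket bound $|S| \leq (1+1/p)|H|$, which is off (harmlessly) when $|H| = p$ and $|S| = p+2$. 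What the paper's explicit computation buys that your inequality does not is the exact equality $|HxH| = p|H|$: this upper-bound information is reused in Proposition~\ref{evdlt} to show $|S^3| \leq (p+2)|H|$ for the eventual-Helfgott-delta example, and a lower bound $c \geq p$ alone would not suffice there, though it fully suffices for Theorem~\ref{uptr} itself.
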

\begin{proof}
First note that $U$ splits as the semidirect product $N\rtimes D$ where
\[N=\{u(1, b) %\sma{cc} 1&b\\0&1\fma
:b\in \Z_p\}\mbox{ and }
D=\{ \diag[\lambda, \lambda^{-1}]%\sma{cl}\lambda&0\\0&\lambda^{-1}\fma
:\lambda\in\Z_p^*\}.\]
Since $N$ is simple, either
$H\cap N=\{I\}$ in which case $H$ is cyclic and the result follows from Proposition~\ref{cyc}, %injects into 
%the cyclic group $U/N\cong\Z_p^*$ under the
%homomorphism $\pi$ which restricts to the top left hand entry, 
or
$N\leq H$, which we assume from now on. We let $x = \sma{cc} a&b\\c&d\fma \in \slp$
%let $\pi$ be the projection to $D$,
and
count the set
\[\{h \in H :xhx^{-1}\in H\}= \{h \in H:
xhx^{-1}=\sma{cc} *&*\\0&*\fma\}.\]
This equality is because if %$x^{-1}\sma{cl}\alpha&\beta\\0&\alpha^{-1}\fma x=
$xu(\alpha, \beta) x^{-1} = 
%\sma{cl}\gamma&\delta\\0&\gamma^{-1}\fma 
u(\gamma, \delta)$
then the traces are the same,
giving $\alpha=\gamma^{\pm 1}$. But if
%$\sma{cl}\alpha&\beta\\0&\alpha^{-1}\fma\in H$ 
$u(\alpha, \beta) \in H$ then so is
%$\sma{ll}\alpha^{\pm 1}&\eta\\0&\alpha^{\mp 1}\fma$ 
$u(\alpha^{\pm 1}, \eta)$
for any $\eta\in\Z_p$ because $N\leq H$.

The $(2, 1)$-entry of
%$x^{-1}\sma{cl}\alpha&\beta\\0&\alpha^{-1}\fma x$ 
$xu(\alpha, \beta)x^{-1}$ is
$(\alpha-\alpha^{-1})dc-\beta c^2$. % where $x=\sma{cc} a&b\\c&d\fma$.
As $c\neq 0$, this is zero if and
only if $(\alpha-\alpha^{-1})dc^{-1}=\beta$. Thus, as $x$ is fixed,
for each $\alpha\in\Z_p^*$ such that $u(\alpha, \beta) \in H$ for at least one $\beta$, only one such $\beta$ satisfies
%$\sma{cl}\alpha&\beta\\0&\alpha^{-1}\fma\in H\cap xHx^{-1}$, 
$u(\alpha, \beta) \in H \cap x^{-1}
 H x$. Therefore, 
$|H\cap x^{-1}Hx|=%|\pi(H)|=
|H|/p$ and thus $|HxH|=|H|^2/|H\cap x^{-1}Hx|=p|H|$.
Thus by Lemma~\ref{lem:c_estimate}\,(ii), 
$|S^3|\geq (p+1)|H|$
and $|S|\leq (1+1/p)|H|$. Now $p$ divides $|H|$ so set $|H|=pk$.
Thus we require $(p+1)pk>k^{3/2}(p+1)^{3/2}$. By rearranging
and squaring we obtain $p^2/(p+1)>k$. Now $|H|\leq p(p-1)$ so $k \leq p-1$ and we are done. 
\end{proof}  

A variation on the Helfgott result for $\slp$ is that there exist two 
absolute constants $c,\delta>0$ such that for any symmetric generating
subset $S$ containing 1, either $S^3=\slp$ or $|S^3|\geq c|S|^{1+\delta}$.
To relate this to our formulation, this variation essentially says
that $|S^3|\geq |S|^{1+\delta}$ for all sufficiently large $|S|$. Indeed,
if the latter holds for all such $S$ with $|S|\geq N$, set 
$c=N^{-\delta}$ and keep the same $\delta$. If however 
$|S^3|\geq c|S|^{1+\delta}$ then although this need not ensure that
$|S^3|\geq |S|^{1+\delta}$ for all large $|S|$, we will have
$|S^3|>|S|^{1+\delta'}$ for any $\delta'<\delta$. Therefore we can
introduce the following notion: let $\Delta$ be the set of real positive
numbers $r$ such that $|S^3|\geq |S|^{1+r}$ for all sufficiently large
symmetric generating subsets
$S$ of $\slp$ containing 1 and with $S^3\neq\slp$. We define the
{\em eventual Helfgott delta} to be the supremum of $\Delta$. 
The next pair of 
results show  that this $\delta$ must be at most $1/2$.

\begin{prop} \label{evdlt}
If $p$ is a prime congruent to $1 \bmod 4$ then there is a symmetric subset
$S$ of $\slp$ containing 1 of size $\frac{p(p-1)+4}{2}$ such that
$(p+1)p(p-1)/2\leq |S^3| \leq (p+2)p(p-1)/2$.
\end{prop}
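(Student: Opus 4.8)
The plan is to construct an explicit subset $S$ and count $|S^3|$ directly, since the size $\frac{p(p-1)+4}{2}$ strongly suggests taking $H$ to be (the image of, or a concrete realization of) a group of order $p(p-1)$ together with an extra coset and a couple of extra elements. The leading term $p(p-1)/2$ and the $+4/2 = +2$ suggest that we are working essentially in $\pslp$, or with a subgroup $H$ of $U$ of order $p(p-1)$ but taking $L = H \cap x^{-1}Hx$ of index $p$ in $H$ (as in the proof of Theorem~\ref{uptr}), so that $|xL| = |L| = (p-1)$ and $|H \cup xL|$ sits around $p(p-1) + (p-1)$. However the precise count $\frac{p(p-1)+4}{2}$ points instead to halving: I expect $S$ to be built so that its size is roughly half of $|U| = p(p-1)$, plus the two elements $x^{\pm 1}$ and perhaps $\pm I$. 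The hypothesis $p \equiv 1 \bmod 4$ is the tell-tale sign: this is exactly the congruence guaranteeing that $-1$ is a square mod $p$, so that the subgroup of squares in $D = \{\diag[\lambda, \lambda^{-1}]\}$ has a specific index-related structure and $-I$ behaves well.

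The concrete approach I would take is first to fix $H = U$ (or an index-$2$ subgroup thereof) of order $p(p-1)$, with the unipotent radical $N$ of order $p$ and diagonal torus $D$ of order $p-1$. I would then choose $x \notin U$, for instance an antidiagonal or otherwise off-diagonal element, with $x^2 \in H$, arranged so that $L = H \cap x^{-1}Hx$ has order $p-1$ and hence $c = [H:L] = p$. By Theorem~\ref{uptr}'s computation, $|HxH| = p|H|$, and if $x^2 \in H$ then Lemma~\ref{lem:c_estimate}\,(ii) gives $S^3 = H \cup HxH \cup x^{-1}Hx$ with $(c+1)|H| \le |S^3| \le (c+2-1/c)|H|$. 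The role of $p \equiv 1 \bmod 4$ is to let me pass to a symmetric subset of half the size: I would take $S$ to be the union of an index-$2$ piece of the subgroup-plus-coset-core together with the small set $\{x^{\pm 1}, \pm I\}$ or similar, engineered so that $|S| = \frac{p(p-1)+4}{2}$. The stated bounds $(p+1)p(p-1)/2 \le |S^3| \le (p+2)p(p-1)/2$ are then exactly $(c+1)|H|$ and (an upper estimate of) $(c+2)|H|$ with $|H| = p(p-1)/2$ and $c = p$, so the cleanest reading is that $H$ itself has order $p(p-1)/2$ and $c = p$.

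Concretely, then, I would let $H$ be the index-$2$ subgroup of $U$ consisting of $u(\alpha,\beta)$ with $\alpha$ a square (order $\frac{p(p-1)}{2}$, and containing $-I$ precisely because $p \equiv 1 \bmod 4$ makes $-1$ a square, so $-I \in H$ and $S$ stays symmetric), choose $x$ off-diagonal with $x^2 \in H$, and verify via the trace-and-$(2,1)$-entry computation of Theorem~\ref{uptr} that $L = H \cap x^{-1}Hx$ has index $c = p$ in $H$. Then $|S| = |H \cup (xH \cap Hx)| = |H| + |xL| = \frac{p(p-1)}{2} + \frac{p-1}{2}$, and I would adjust the extra two elements to land exactly on $\frac{p(p-1)+4}{2}$; the two bounds on $|S^3|$ follow immediately from Lemma~\ref{lem:c_estimate}\,(ii) with $|H| = \frac{p(p-1)}{2}$ and $c = p$. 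The main obstacle I anticipate is bookkeeping the exact constant $+4/2$: getting the size of $S$ to be precisely $\frac{p(p-1)+4}{2}$ requires pinning down exactly which small elements ($x^{\pm 1}$, $\pm I$, and possibly one element of $xL$ that coincides with its own inverse) are or are not already counted in $H \cup xL$, and confirming that $x^{-1} \in xL$ (which holds iff $x^2 \in H$) so that adding $x^{\pm 1}$ contributes correctly without double-counting. Verifying $\langle S \rangle = \slp$ and $S = S^{-1}$ under the $p \equiv 1 \bmod 4$ hypothesis is routine once the generators are written down.
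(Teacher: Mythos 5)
Your construction is essentially the paper's own: it takes exactly your $H=\{u(q,\beta): q\in Q,\ \beta\in\Z_p\}$ of order $p(p-1)/2$ (with $Q$ the quadratic residues, so $p\equiv 1 \bmod 4$ puts $-I\in H$), an element $x\notin U$ of order $4$ with $x^2=-I\in H$, the index computation $[H:H\cap x^{-1}Hx]=p$ via the trace-and-$(2,1)$-entry argument of Theorem~\ref{uptr}, and the bounds $(p+1)|H|\leq |S^3|\leq |H|+|HxH|+|x^{-1}Hx|=(p+2)|H|$, the upper bound coming from $S^3=H\cup HxH\cup x^{-1}Hx$ when $x^2\in H$. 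The one slip is in your final bookkeeping: the set of size $\frac{p(p-1)+4}{2}$ is the subgroup-plus-two set $S=H\cup\{x^{\pm 1}\}$ (note $x\neq x^{-1}$ as $x^2=-I$), not the coset core $H\cup xL$, whose size is $|H|+\frac{p-1}{2}=\frac{p^2-1}{2}$; your ``adjust the extra two elements'' should simply be this choice, and the same bounds on $|S^3|$ apply verbatim. Also note that the proposition itself does not assert $\langle S\rangle=\slp$ -- the paper deduces generation afterwards in Corollary~\ref{evhlf} from $|S^3|\geq |\slp|/2$ together with simplicity, so no separate verification is needed in the construction.
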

\begin{proof} One might first try applying Theorem~\ref{uptr} 
to the subgroup-plus-two subset $S=H\cup\{x^{\pm 1}\}$ 
with $H$ the subgroup $U$
of upper triangular matrices and $x\in \slp$ chosen so that $x^2\in H$ and
$\langle x,H\rangle=\slp$. The problem is that we find from the proof that
$|S^3|\geq (p+1)p(p-1)$ which is all of $\slp$. Consequently we set $Q$ to be
the set of quadratic residues mod $p$, with $\pm 1\in Q$ and we let $H$ be the
index 2 subgroup of $U$
%\[\{\sma{cl}q&\beta\\0&q^{-1}\fma: q\in Q, \beta\in Z_p\}\]
$$\{u(q, \beta) : q\in Q, \beta\in \Z_p\}$$
of order $p(p-1)/2$. Now we find a suitable $x$, for instance $x$ could be
the order 4 element $\sma{rr}1&-2\\-1&-1\fma$ with $x\notin U$ but 
$x^2=-I\in H$. Then Theorem~\ref{uptr} gives us that
\[|S^3|\geq |HxH|+|H|=(p+1)|H|.\]
But as $x^2\in H$, we can use the argument just before Theorem~\ref{wlog}
to say that
$|S^3|\leq |HxH|+|H|+|xHx^{-1}|=(p+2)|H|$.
\end{proof}

\begin{co} \label{evhlf}
The eventual Helfgott delta is at most $1/2$.
\end{co}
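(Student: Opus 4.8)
The plan is to combine Proposition~\ref{evdlt} with the definition of the eventual Helfgott delta directly. The proposition hands us, for every prime $p \equiv 1 \bmod 4$, a symmetric subset $S$ containing $1$ with $|S| = (p(p-1)+4)/2$ and an upper bound $|S^3| \le (p+2)p(p-1)/2$. What I need to verify first is that these $S$ actually qualify as competitors in the definition of $\Delta$: they generate $\slp$ (which comes from the choice $\langle x, H \rangle = \slp$ in the construction), they are symmetric and contain $1$, and crucially $S^3 \ne \slp$. The last point follows because $|S^3| \le (p+2)p(p-1)/2$, which is strictly less than $|\slp| = p(p^2-1) = p(p-1)(p+1)$ for all $p \ge 5$, so the triple product cannot exhaust the group.

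Next I would compute the exponent $\log|S^3|/\log|S| - 1$ for this family and show its limit superior is at most $1/2$. Writing $|S| \sim p^2/2$ and $|S^3| \le (p+2)p(p-1)/2 \sim p^3/2$, both quantities are polynomial in $p$ of degree $2$ and $3$ respectively, so heuristically the ratio $\log|S^3|/\log|S|$ tends to $3/2$, giving an exponent tending to $1/2$. To make this precise, suppose for contradiction that the eventual Helfgott delta were some $r > 1/2$; then by definition $|S^3| \ge |S|^{1+r}$ would have to hold for all sufficiently large symmetric generating sets, in particular for our family $S$ once $p$ is large enough. I would then derive a contradiction by showing that for large $p$,
\begin{equation*}
|S^3| \le \frac{(p+2)p(p-1)}{2} < \left(\frac{p(p-1)+4}{2}\right)^{1+r} = |S|^{1+r}.
\end{equation*}
Taking logarithms, this reduces to checking that $\log|S^3|/\log|S| < 1+r$ for all large $p$, and since the left-hand side converges to $3/2 < 1+r$, it is eventually strictly below $1+r$.

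The one technical point I would be careful about is that the family only exists for primes congruent to $1 \bmod 4$, but this is not a real obstacle: there are infinitely many such primes (by Dirichlet, or elementarily), so the family contains arbitrarily large sets $S$, which is exactly what is needed to contradict an \emph{eventual} lower bound. The definition of $\Delta$ quantifies over all sufficiently large $S$, so exhibiting one infinite subfamily violating the bound $|S^3| \ge |S|^{1+r}$ for every fixed $r > 1/2$ suffices to keep every such $r$ out of $\Delta$.

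I expect the main subtlety to be bookkeeping with the asymptotics rather than any conceptual difficulty: one must confirm that the convergence $\log|S^3|/\log|S| \to 3/2$ is genuinely from below (or at least eventually below $1+r$ for any $r>1/2$), which follows because the degree-$3$ upper bound on $|S^3|$ divided by the degree-$2$ lower bound on $|S|$ raised to the $3/2$ power leaves a sub-polynomial discrepancy that the strict inequality $r > 1/2$ absorbs. Since $1/2 \in \Delta$ is not claimed — only that the supremum is at most $1/2$ — I do not need the boundary case, and the argument closes cleanly by ruling out every $r$ strictly exceeding $1/2$.
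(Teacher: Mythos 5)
Your asymptotic bookkeeping is sound and matches the paper's argument: both feed the sets of Proposition~\ref{evdlt} into the definition of the eventual Helfgott delta, use the upper bound $(p+2)p(p-1)/2 < (p+1)p(p-1) = |\slp|$ to get $S^3 \neq \slp$, and observe that $\log|S^3|/\log|S| \to 3/2$ along the infinitely many primes $p \equiv 1 \bmod 4$, which excludes every $r > 1/2$ from $\Delta$ (the paper phrases this as $|S^3|/|S|^{3/2} \to \sqrt{2}$ by squeezing, which is the same computation). However, there is one genuine gap: your claim that $\langle S \rangle = \slp$ ``comes from the choice $\langle x, H \rangle = \slp$ in the construction.'' No such choice is made. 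Proposition~\ref{evdlt} asserts only the existence of a symmetric set with the stated size bounds; its proof takes $H$ of order $p(p-1)/2$ inside the Borel subgroup $U$ and one specific order-$4$ element $x \notin U$, and at no point proves that $H$ and $x$ generate $\slp$. Since the definition of the eventual Helfgott delta quantifies only over \emph{generating} sets, without this step your $S$ is not a competitor at all, and the contradiction you set up never gets off the ground.

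The paper closes exactly this gap with a short argument that, notably, uses the \emph{lower} bound of Proposition~\ref{evdlt} — the one bound your proposal never invokes: since $|S^3| \geq (p+1)p(p-1)/2 = |\slp|/2$ and $S^3 \subseteq \langle S \rangle$, the subgroup $\langle S \rangle$ has index at most $2$ in $\slp$; a proper subgroup of index $2$ would be normal, contradicting Proposition~\ref{wk}\,(ii). Hence $\langle S \rangle = \slp$. One could instead verify generation directly from the subgroup structure of $\slp$, but some argument is required, and the index-$2$ trick is the cheapest. With that single insertion, your proof is correct and is essentially the paper's.
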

\begin{proof}
On taking $S$ as in Proposition~\ref{evdlt} we see that
$|\slp|/2\leq|S^3|\leq (p+2)p(p-1)/2
<|\slp|=(p+1)p(p-1)$, thus $S^3\neq\slp$ and
as $p$ tends to infinity, $|S^3|/|S|^{3/2}$ tends to $2^{1/2}$ by squeezing. 
Now if $S$ generated a proper subgroup
of $\slp$ then this subgroup would have index 2 and so be normal, which
contradicts Proposition~\ref{wk}.
\end{proof}

Another variation on the eventual Helfgott delta is the supremum over
$\delta$ such that $|S^3|\geq |S|^{1+\delta}$ for all symmetric generating
sets $S$ containing 1 of $\slp$ for sufficiently large $p$. We will show
in Corollary \ref{inf} that our subsets with 
 $\delta=(\log(7)-1)/6\approx 0.3012$ occur in $\slp$ for infinitely many
$p$, giving an upper bound for this variation of the eventual Helfgott
delta.  
\section{The exceptional subgroups}

The remaining subgroups to be considered are the exceptional subgroups
$2\udot\Alt_4, 2\udot\Sym_4$ and $2\udot\Alt_5$, of orders $24,\,48$ and 120 
respectively.
We deal with each case in turn.

\begin{prop} \label{a4}
Let $H \cong 2 \udot \Alt_4$ be a subgroup of $\slp$ for some $p$, and 
let $S$ be an $H$-plus-two subset or $H$ plus a coset core.
If $\langle S\rangle = \slp$ then
$L=x^{-1}Hx\cap H$ has index at least
3 in $H$ and $|S^3|\geq 96$, so that
$|S|^3\geq |S|^{1+\delta}$ for $\delta=\log(3)/5\approx 0.3169$.
\end{prop}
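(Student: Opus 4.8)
The plan is to run exactly the index-counting argument used for the cyclic and dihedral subgroups, reducing everything to $c = [H:L]$ with $L = x^{-1}Hx \cap H$ and then invoking Lemma~\ref{lem:c_estimate}; the one new ingredient is the internal structure of $H \cong 2\udot\Alt_4 \cong \mathrm{SL}(2,3)$. The crucial first step is to show $c \geq 3$. Since $L \le H$, the index $c$ divides $|H| = 24$. The value $c = 1$ forces $H \nn \slp$ by Proposition~\ref{ind12}(i), which is impossible under $\langle S\rangle = \slp$, so it remains to exclude $c = 2$. Here I would use that $2\udot\Alt_4$ has derived subgroup the quaternion group of order $8$, with abelianisation $C_3$; hence there is no surjection onto $C_2$ and so $H$ has no subgroup of index $2$. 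As $L$ is a subgroup of $H$, the case $c = 2$ cannot occur, and therefore $c \geq 3$. I would also record that $-I \in L$ (it lies in $H$ and is central in $\slp$ by Proposition~\ref{wk}), so $|L| \geq 2$ and consequently $c \le 12$.

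With $c \geq 3$ established, Lemma~\ref{lem:c_estimate} yields the size bounds at once. In case~(i), where $HxH \neq Hx^{-1}H$ and $S = H \cup \{x^{\pm 1}\}$, we get $|S^3| \geq (2c+1)|H| \geq 7\cdot 24 = 168$ with $|S| = 26$. In case~(ii), where $x^2 \in H$ and $S = H \cup xL$, we get $|S^3| \geq (c+1)|H| \geq 4\cdot 24 = 96$ with $|S| = (1+1/c)|H|$. In either situation $|S^3| \geq 96$, which is the second assertion.

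For the exponent the binding case is~(ii) with $c = 3$: there $|S| = 32$ and $|S^3| \geq 96 = 32\cdot 3$, so $\log|S^3| \geq 5 + \log 3 = (1 + \log(3)/5)\log|S|$, giving precisely $\delta = \log(3)/5$. To see that $c = 3$ is the worst case I would use $|L|\,c = |H| = 24$, so that $|S| = |H| + |L| = 24 + 24/c$ decreases in $c$ while the lower bound $|S^3| \geq (c+1)\cdot 24$ increases in $c$; hence over all $c \ge 3$ the ratio $\log|S^3|/\log|S|$ is smallest at $c = 3$, where it equals $\log(96)/\log(32) = 1 + \log(3)/5$. (Equivalently one evaluates $f_{|L|}(c)$ of Lemma~\ref{log} at the finitely many pairs $(|L|,c) \in \{(8,3),(6,4),(4,6),(3,8),(2,12)\}$ and finds the minimum to be $f_8(3) = \log(96)/5$.) Case~(i) is strictly better: at its smallest value $c = 3$ one has $\log(168)/\log(26) \approx 1.57 > 1 + \log(3)/5$.

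The only genuinely non-formal step is the elimination of $c = 2$. Proposition~\ref{ind12}(ii) settles $c = 2$ only when $HxH = Hx^{-1}H$, so the argument really does rely on the group-theoretic fact that $\mathrm{SL}(2,3)$ has no subgroup of index $2$; everything after that is routine bookkeeping with Lemma~\ref{lem:c_estimate} and the monotonicity packaged in Lemma~\ref{log}.
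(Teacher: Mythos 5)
Your proposal is correct and follows essentially the same route as the paper: rule out $[H:L]=1$ and $2$ (the paper simply notes that $H$ has no subgroup of index $2$, which you justify via the abelianisation $C_3$ of $\mathrm{SL}(2,3)$), then feed $c\geq 3$ and $|H|=24$ into Lemma~\ref{lem:c_estimate} to get $|S^3|\geq 96$ and $|S|\leq 32$, whence $\delta=\log(3)/5$. Your explicit monotonicity check over $c$ is a slightly more detailed version of the paper's one-line observation that $|S|\leq 32$ while $|S^3|\geq 96$ in all cases, so there is no substantive difference.
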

\begin{proof}
Note that $H$ has no subgroups of index 2.
%, as $M$ would have to
%map to an index 2 subgroup of $\Alt_4$ or would inject under the
%projection to $\pslp$ but Proposition~\ref{wk} says that only odd order
%subgroups inject. 
Thus Lemma~\ref{lem:c_estimate},  with $|H|=24$
and $[H:L]\geq 3$, yields
$|S^3|\geq %|HxH|+|H|\geq %3|H|+|H|=
96$ and $|S|\leq %|H|+|L|\leq 
32$.
%, so $\delta\geq 0.3169$.
\end{proof}

We now move to $H=2\udot\Alt_5$, because it turns out that 
$2\udot\Sym_4$ will produce
the lowest values of $\delta$.

\begin{prop} \label{a5}
If $\slp$ has a subgroup $H$ isomorphic to $2\udot\Alt_5$ then for any
$H$-plus-two subset or $H$ plus coset core  $S$ with 
$\langle H,x\rangle = \slp$ we can bound $|S|^3\geq |S|^{1+\delta}$ for 
$\delta=\log(5)/\log(144)\approx 0.3238$.
\end{prop}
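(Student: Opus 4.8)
The plan is to follow exactly the same strategy that worked for $2\udot\Alt_4$ in Proposition~\ref{a4}, namely to pin down the smallest possible index $c=[H:L]$ where $L=x^{-1}Hx\cap H$, then feed this into Lemma~\ref{lem:c_estimate}. The key structural input is the subgroup lattice of $H\cong 2\udot\Alt_5\cong\slp[5]$, which is the binary icosahedral group of order $120$. First I would record that $H$ is perfect with a unique central involution $-I$, so $H$ has no subgroup of index $2$ (equivalently $H^{\mathrm{ab}}=1$); in fact the smallest index of a proper subgroup of $H$ is $5$, realised by the subgroups isomorphic to $2\udot\Alt_4$ of order $24$. This is the analogue of the remark ``$H$ has no subgroups of index $2$'' in the $2\udot\Alt_4$ proof, and it is the crucial fact that forces $c$ to be large.

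Next I would argue that $c=[H:L]\geq 5$. Since $L$ is a proper subgroup of $H$ (it is proper because $L=H$ would make $H$ normal in $\langle H,x\rangle=\slp$, contradicting Proposition~\ref{wk}\,(ii) as $|H|=120>2$), and since every proper subgroup of the binary icosahedral group has index at least $5$, we get $c\geq 5$. Then I would split into the two cases of Lemma~\ref{lem:c_estimate}. In case~(i), where $HxH\neq Hx^{-1}H$, the bound is $|S^3|\geq(2c+1)|H|\geq 11\cdot 120=1320$ while $|S|\leq|H|+2=122$, giving a $\delta$ comfortably above the claimed value. In case~(ii), where we may assume $x^2\in H$ and $S=H\cup xL$, we have $|S|=(1+1/c)|H|\leq(6/5)\cdot 120=144$ and $|S^3|\geq(c+1)|H|\geq 6\cdot 120=720$. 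The worst case is therefore case~(ii) with $c=5$ exactly: here $|S|=144$ and $|S^3|\geq 720=5\cdot 144$, so $\log(|S^3|)/\log(|S|)\geq\log(720)/\log(144)=1+\log(5)/\log(144)$, which is precisely $1+\delta$ with $\delta=\log(5)/\log(144)\approx 0.3238$.

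It remains to confirm that case~(ii) with $c=5$ really is the governing case and that I have checked the monotonicity in $c$. For case~(ii), $\log((c+1)|H|)/\log((1+1/c)|H|)$ should be verified to be increasing in $c$ (this is essentially Lemma~\ref{log} with $|H|=120$ playing the role of $l(k{+}1)$ after the substitution $k=c$), so that $c=5$ gives the minimum; the only candidate value below this would come from $c=5$, which is exactly the one I evaluate. For case~(i) the bound $(2c+1)/(1+\tfrac{2}{|H|})$-type estimate at $c=5$ is already far larger, so case~(i) never beats case~(ii).

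The main obstacle is purely group-theoretic rather than computational: I must be certain that the binary icosahedral group has \emph{no} proper subgroup of index less than $5$. This reduces, via the central quotient, to the statement that $\Alt_5$ has no proper subgroup of index less than $5$, which is standard (its smallest-index proper subgroups are the point stabilisers $\Alt_4$ of index $5$); one then lifts through the central extension, using that any subgroup of $H$ of index $c$ projects to a subgroup of $\Alt_5$ of index either $c$ or $c/2$, and that the index-$2$ drop is impossible here because it would require an odd subgroup where none of small index exist. Once that minimal-index fact is nailed down, the rest is a direct substitution into Lemma~\ref{lem:c_estimate} exactly as in Proposition~\ref{a4}.
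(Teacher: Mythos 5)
Your proposal is correct and takes essentially the same approach as the paper: the paper's proof likewise notes that $2\udot\Alt_5$ has no proper subgroup of index less than $5$, so $c=[H:L]\geq 5$, and then applies Lemma~\ref{lem:c_estimate} to get $|S^3|\geq 6\cdot 120=720$ and $|S|\leq 120+24=144$, whence $\log(720)/\log(144)=1+\log(5)/\log(144)$. Your additional material (the proof of the minimal-index fact via the central extension, the explicit two-case split, and the monotonicity check in $c$) just fills in details that the paper leaves implicit.
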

\begin{proof}
%The group $\Alt_5$ has no proper subgroups of index less than 5 and
%the same is true of $2\udot\Alt_5$ by the same argument as for $2\udot\Alt_4$
%above.
The group $2 \udot \Alt_5$ has no proper subgroups of index less than $5$.
Thus Lemma~\ref{lem:c_estimate} implies that $|S^3|\geq 5|H|+|H|=720$ and 
$|S|\leq 120+24=144$. %,  so 
%$\delta\geq\log(5)/\log(144)$.
\end{proof}

We now come to 
the best possible value of $\delta$ over the two types of
subset considered and we conclude, perhaps surprisingly, 
that subgroup-plus-two subsets cannot obtain this value of $\delta$. 
Recall the types of matrices defined at the beginning of Section~\ref{sec:tri}, and that $2 \udot \Sym_4 \le \slp$ 
only when $p \equiv \pm 1 \bmod 8$, and is maximal for these $p$. 

%In the following proof we write $$\diag[\alpha, \beta] = \sma{cc}
%\alpha&0\\0&\beta\fma, \quad \antidiag[\alpha, \beta] = 
%\sma{cc}
%0 & \alpha \\ \beta&0\fma.$$

\begin{thm} \label{s4}
Let $H \cong 2 \udot \Sym_4$ be a subgroup of $\slp$ for some $p$, and let 
$S$ be an $H$-plus-two subset or $H$ plus coset core 
with $\langle S\rangle=\slp$.
Then $|S^3|\geq 224$ and $|S|\leq 64$, giving
$|S^3|\geq |S|^{1+\delta}$ for $\delta=(\log(7)-1)/6
\approx 0.3012$.
Furthermore, $|S^3|=|S|^{1+\delta}$ if and only if 
$L=x^{-1}Hx\cap H$ has index 3 in $H$ and $S=H\cup xL$ with $x^2\in H$.  
\end{thm}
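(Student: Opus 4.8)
The plan is to parametrise every admissible subset by the index $c=[H:L]$, where $L=x^{-1}Hx\cap H$, together with whether or not $HxH=Hx^{-1}H$, and then to feed this into Lemma~\ref{lem:c_estimate}. Since $|H|=48$ is fixed and $c$ is the index of a subgroup of $2\udot\Sym_4$, only finitely many configurations arise; in the case $x^2\in H$ we have $c\mid 48$ with $c\geq 3$, because $c=1$ would force $H\unlhd\slp$ and $c=2$ is excluded here by Proposition~\ref{ind12}\,(ii). I would therefore argue by direct numerical comparison rather than through the monotonicity in Lemma~\ref{log}. Note also that, by Proposition~\ref{nwst}, when $x^2\in H$ the set $S^3=H\cup HxH\cup x^{-1}Hx$ is the same for the plus-two subset and for the coset core, so $|S^3|$ depends only on $c$ and the case, while $|S|$ equals $50$ for a plus-two subset and $(1+1/c)48$ for a coset core.

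The first step is to dispose of every configuration except ``$c=3$ with $x^2\in H$ and $S=H\cup xL$''. If $HxH\neq Hx^{-1}H$ then Lemma~\ref{lem:c_estimate}\,(i) gives $|S|=50$ and $|S^3|\geq(2c+1)\cdot48\geq 240$ (as $c\geq2$); if $x^2\in H$ but $c\geq4$ then Lemma~\ref{lem:c_estimate}\,(ii) gives $|S|\leq(1+\tfrac14)48=60$ and $|S^3|\geq(c+1)48\geq 240$. In each of these cases $|S|^{1+\delta}<240\leq|S^3|$, the tightest instance being the coset core with $c=4$, where $|S|=60$ and $60^{1+\delta}\approx 206<240$; hence $|S^3|>|S|^{1+\delta}$ \emph{strictly}. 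This simultaneously yields $|S|\leq 64$, with equality forced onto the case $c=3$, $x^2\in H$, $S=H\cup xL$, and reduces both the bound $|S^3|\geq 224$ and the equality clause to that single configuration.

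The crux is therefore $c=3$, $x^2\in H$, $S=H\cup xL$, where Lemma~\ref{lem:c_estimate}\,(ii) only gives $192\leq|S^3|\leq 224$ and I must pin down the exact value. Here $L$ is a Sylow $2$-subgroup of $H$, so $L\cong Q_{16}$, and since $x^2\in H$ we get $x^{-1}Lx=x^{-1}Hx\cap x^{-2}Hx^2=L$, i.e. $x$ normalises $L$. Writing $S^3=H\sqcup HxH\cup x^{-1}Hx$ and using that $x^{-1}Hx\cap H=L$ is disjoint from $HxH$, one obtains
\[
|S^3|=|H|+|HxH|+\bigl(|x^{-1}Hx|-|L|\bigr)-\bigl|x^{-1}Hx\cap HxH\bigr|=224-\bigl|x^{-1}Hx\cap HxH\bigr|,
\]
so everything reduces to proving $x^{-1}Hx\cap HxH=\emptyset$. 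This is exactly where the fixed order $48$ is insufficient: a trace computation shows $x^{-1}Hx$ and $H$ carry the same set of traces, and these traces also occur among the elements of $HxH$ (for instance $\mathrm{tr}(x)=0$ lies in this common set when $|x|=4$), so emptiness of the intersection cannot be deduced from conjugacy-invariants alone.

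To settle this I would realise $H$ through its faithful two-dimensional characteristic $0$ representation, i.e. as the binary octahedral group inside $\slc$ with entries in a fixed ring of algebraic integers, choose the explicit order-$4$ element $x$ with $x^2=-I\in H$ normalising $L$ and $[H:L]=3$, and then verify by a finite computation that $x^{-1}hx=axb$ has no solution with $h,a,b\in H$; equivalently that no element of $xH$ whose trace matches that of some $g\in x^{-1}Hx\setminus L$ is actually equal to $g$. Reducing these fixed integral matrices modulo the relevant primes $p\equiv\pm1\bmod 8$ preserves the (non)coincidences, giving $x^{-1}Hx\cap HxH=\emptyset$ in $\slp$ and hence $|S^3|=224$. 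The main obstacle is precisely this exact count, and in particular making it hold \emph{for every} admissible $x$ rather than a single one: I would handle that by showing all such pairs $(H,x)$ are conjugate in $\glp$, so that one characteristic $0$ computation suffices. Combining this with the strict inequalities of the second step establishes $|S^3|\geq 224$ throughout, and since $64^{1+\delta}=2^{5+\log 7}=224$ while $|S|<64$ in every other case, equality $|S^3|=|S|^{1+\delta}$ holds exactly when $c=3$ and $S=H\cup xL$ with $x^2\in H$.
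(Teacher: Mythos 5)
Your reduction is sound and is essentially the paper's own: the elimination of $c=2$ (the paper uses Lemma~\ref{unin} plus the uniqueness of the index-$2$ subgroup of $2\udot\Sym_4$, while you use Proposition~\ref{ind12}\,(ii), which is legitimate since $x^2\in H$ forces $HxH=Hx^{-1}H$), the numerical disposal of $c\geq 4$ and of the disjoint case, and the inclusion--exclusion identity $|S^3|=224-|x^{-1}Hx\cap HxH|$ all match the paper's argument, as does the observation that everything hinges on $HxH\cap x^{-1}Hx=\emptyset$. The genuine gap is in how you handle \emph{every} admissible $x$. Your proposed device --- that all admissible pairs $(H,x)$ are conjugate in $\glp$, so that one characteristic-$0$ computation suffices --- is false as stated: conjugation preserves the order of $x$, yet for $p\equiv 1 \bmod 16$ both $x=\antidiag[v,-v^{-1}]$ (of order $4$, with $x^2=-I$) and $x=\diag[u,u^{-1}]$ with $u$ of order $16$ (so $x$ of order $16$, with $x^2\in L$) satisfy all the hypotheses of the theorem. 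One can partially salvage this at the level of cosets, since $xL$ for a diagonal $x$ contains antidiagonal elements of order $4$; but proving that these two families exhaust the admissible $x$ is precisely the content of the paper's proof, which your sketch omits: since $x$ normalises the generalized quaternion group $L=\lgl a,c\rgl$ and $a^{\pm 1}$ are the only elements of $L$ of order $8$ with trace $\mathrm{tr}(a)$, one gets $x^{-1}ax=a^{\pm 1}$, hence $x=\diag[u,u^{-1}]$ with $u^{16}=1$ or $x=\antidiag[v,-v^{-1}]$; then explicit entrywise equations on $dxd$, $exe$ and $xdxex^{-1}$ show that any element of $HxH\cap x^{-1}Hx$ forces $v^8=1$ (whence $x\in L\leq H$, absurd) or $v$ a primitive $16$th root of unity, leaving only the eight odd powers of a fixed $16$th root (in each of the two cases) to verify.

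A second, related, weakness is your transfer step. The blanket claim that reducing fixed integral matrices modulo $p$ ``preserves the (non)coincidences'' is only true for primes not dividing the finitely many fixed nonzero norms of the entry differences, so a single characteristic-$0$ matrix computation would a priori prove disjointness for all but finitely many $p\equiv\pm 1\bmod 8$, not for all of them. The paper largely sidesteps this by treating $u$ and $v$ as arbitrary elements of the field $\F$ (which is $\F_p$ or $\F_{p^2}$) and deriving its constraints as polynomial identities valid over $\F$ directly, confining the characteristic-$0$ verification to the finitely many primitive $16$th-root parameters, after identifying $H$ with (a $\mathrm{GL}(2,p^2)$-conjugate of) the $p$-modular reduction of $\overline{H}$ via the uniqueness of the faithful $2$-dimensional character. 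To make your version airtight you would either have to argue as the paper does, or explicitly bound and exclude the bad primes coming from your fixed integral model.
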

\begin{proof}
%First we rule out $[H:L]=2$: the group $2\udot\Sym_4$ is like $\Sym_4$ in that
%it only has one subgroup of index 2, which again can be seen from
%Proposition~\ref{wk}, 
The group $2 \udot \Sym_4$ has a unique subgroup of index $2$,
so we can apply Lemma~\ref{unin} to conclude
that if $L$ has index $2$ then $L$ is normalised by $\lgl H, x \rgl = \slp$ which is a
contradiction. 

If $[H:L]\geq 4$ then Lemma~\ref{lem:c_estimate} gives 
$|S^3|\geq %5|H|=
240$ and $|S|\leq %5|H|/4=
60$, so we assume from now on that $[H:L]=3$.
Moreover we can assume
without loss of generality that $x^2\in H$ when finding the smallest value of
$|S^3|$. As for $|S|$, 
if $x^2\notin H$ then
$S = H \cup \{x^{\pm 1}\}$ and so $|S|=50$, whereas if $x^2\in H$
then we can take $S$ to be
the subgroup plus coset core of size 64.
 
Thus we will assume from now on that $x^2\in H$ and $[H:L]=3$ so 
$S^3 = H \cup HxH\cup x^{-1}Hx$. % and $HxH\cup H$ is a disjoint union. 
Therefore we will obtain the given value for $|S^3|$
on showing that
$HxH \cap x^{-1}Hx = \emptyset$. 
To do so, we will work in the characteristic zero representation of $2 \udot \Sym_4$ given by
 $\overline{H}=\langle a,b\rangle$ where
\[a= \frac{\sqrt{2}}{2}\diag\left[(1+i), (1-i)\right], 
%\sma{cc}\frac {(1+i)\sqrt{2}}{2}&0\\0&\frac{(1-i)\sqrt{2}}{2}\fma,
b=\frac{\sqrt{2}}{2}\sma{rc}1&1\\-1&
1\fma\]
%ab=\sma{rc}\frac{1+i}{2}&\frac{1+i}{2}\\
%\frac{-1+i}{2}&\frac{1-i}{2}\fma\]
so that $a$ and $b$ are of order 8. % whereas $ab$ has order 6. 
Our assertions in the remainder of this proof about $\overline{H}$ can 
easily be verified in \Magman, by defining $\overline{H}$ as the group generated by 
$a$ and $b$ over $\Q(\sqrt{2}, i)$. 

There is a unique faithful $2$-dimensional character of $H$, up to automorphisms.
Thus if $p \equiv 1 \bmod{8}$ then $H$ is the $p$-modular 
reduction of $\overline{H}$, whilst if $p \equiv -1 \bmod{8}$ then 
$H$ is a $\mathrm{GL}(2, p^2)$-conjugate of a $p$-modular 
reduction of $\overline{H}$. Let $\F$ be $\F_p$ when $p \equiv 1 \bmod 8$ 
and $\F_{p^2}$ otherwise, so that the $p$-modular reduction of $\overline{H}$ lies
in $\F$. 

We now proceed to work purely over $\mathbb{Q}(\sqrt{2}, i)$ but all
algebraic consequences will be true over $\F$ too: henceforth we 
identify $\overline{H}$ with $H$.
The group $L$ is a 
Sylow 2-subgroup of $H$, so it is straightforward to check that
without loss of generality we may define $c:= \frac{\sqrt{2}}{2}\antidiag[(-1+i), (1+i)]$
%$$c:= \sma{cc}0&\frac {(-1+i)\sqrt{2}}{2}\\\frac{(1+i)\sqrt{2}}{2}&0\fma$$
and set $L = \lgl a, c \rgl$.
%whose characteristic zero projection in 
%$\mathrm{PSL}(2,\C)$ is the order
%8 dihedral group where the rotation of order 4 is around the $z$-axis.
%Therefore we can assume that $L$ is made up of the following elements:
%\[\pm I,\pm\sma{rc}0&1\\-1&0\fma,\pm\sma{cc}0&i\\i&0\fma,
%\pm\sma{cr}i&0\\0&-i\fma,
%\pm\sma{cc}\frac {(1+i)\sqrt{2}}{2}&0\\0&\frac{(1-i)\sqrt{2}}{2}\fma,\]
%\[
%\pm\sma{cc}\frac {(1-i)\sqrt{2}}{2}&0\\0&\frac{(1+i)\sqrt{2}}{2}\fma,
%\pm\sma{cc}0&\frac {(1+i)\sqrt{2}}{2}\\\frac{(-1+i)\sqrt{2}}{2}&0\fma,
%\pm\sma{cc}0&\frac {(-1+i)\sqrt{2}}{2}\\\frac{(1+i)\sqrt{2}}{2}&0\fma.\]

%In particular all elements of $L$ are either diagonal or antidiagonal.
%We will also need to identify the 18 order 4 elements of $H$. In
%addition to the ten of these in $L$, there is also
%\[\pm\sma{rl}-z&z\\z&z\fma=\pm b\sma{cr}i&0\\0&-i\fma,
%\pm\sma{cr}z&z\\z&-z\fma=\pm\sma{cr}i&0\\0&-i\fma b,\]
%where $2z^2=-1$ and
%\begin{eqnarray*}
%\pm\sma{rr}z&w\\-w&-z\fma&=\pm&\sma{cl}\lambda&0\\0&\lambda^{-1}\fma 
%\sma{cr}z&z\\z&-z\fma\sma{lc}\lambda^{-1}&0\\0&\lambda\fma,\\
%\pm\sma{rc}-z&w\\-w&z\fma&=\pm&\sma{cl}\lambda&0\\0&\lambda^{-1}\fma 
%\sma{rc}z&z\\z&z\fma\sma{lc}\lambda^{-1}&0\\0&\lambda\fma
%\end{eqnarray*}
%where $2w^2=1$ and $\lambda^2=-i$.

As $x^{-1}Lx=L$ and there are only 2 elements of order 8  and trace $\mathrm{tr}(a)$ in $L$, namely
$a^{\pm 1}$,
%\sma{cc}\frac {(1+i)\sqrt{2}}{2}&0\\0&\frac{(1-i)\sqrt{2}}{2}\fma^{\pm 1},\]
we deduce that $x^{-1}ax= a^{\pm 1}$. 
%But by trace considerations we
%require $xax^{-1}=a^{\pm 1}$ and 
An easy calculation tells us that if
$x^{-1}ax=a$ then $x= \diag[u, u^{-1}]$ for some $u$, %$\sma{cl}u&0\\0&u^{-1}\fma$, 
whereas $x^{-1}ax=a^{-1}$ means that $x = \antidiag[v, -v^{-1}]$. %$x=\sma{cc}0&v\\-v^{-1}&0\fma$.
Now as $x\neq\pm I$ but $x^2\in L$, the order of $x$ is 4, 8 or
16. Therefore $u^{16}=1$ in the first case, whereas a direct calculation in the second
case shows that $x$ has order 4 for any invertible $v$.

Let us start by considering the second case. Since $[H:L] = 3$, we 
define $z = \sqrt{2}i/2$ and fix right (and left) coset representatives $I$, 
$$d=\sma{rc}-z&z\\z&z\fma, \mbox{ and } e=\sma{ll}-z&zi\\-zi&z\fma.$$
%This means that $I,d^{-1},e^{-1}$ form a set of left coset representatives
%but as $d,e$ both have order 4, we see that 
%Furthermore, $I,d,e$ can also
%be used as left coset representatives.

If $HxH$ intersects $x^{-1}Hx$ nontrivially then $l_1sxtl_2=x^{-1}hx$ for some $h\in H$,
$l_1,l_2\in L$ and $s,t \in \{I,d,e\}$. As
$x$ normalises $L$, this is equivalent to saying that $sxt$ is in 
$x^{-1}Hx$. If $s$ or $t$ is $I$ then $sxt = x^{-1}hx$ implies that $x\in H$, so we
must check to see if any of $dxd,exe,dxe$ and $exd$ are in 
$x^{-1}Hx$,
though the last check is unnecessary because $exd \in x^{-1}Hx$ if and only if
its inverse $-dxe$ is (as $|d| = |e| = |x| = 4$), so 
if and only if $dxe$ is.

Now $dxd$ is easily confirmed to be of the form
\[-\frac{1}{2}\sma{lr}(v^{-1}-v)&-(v+v^{-1})\\(v+v^{-1})&(v-v^{-1})\fma\]
but let us consider the form of the order 4 elements in $x^{-1}Hx$. As
$x  = \antidiag[v, -v^{-1}]$, 
%$x=\sma{cc}0&v\\-v^{-1}&0\fma$, 
when an arbitrary element of 
$\mathrm{SL}(2,\mathbb F)$ is conjugated by $x$ the diagonal entries are swapped.
Moreover, a diagonal matrix remains diagonal under conjugation by $x$.
Now $dxd$ cannot be in $L$ as this would imply $x\in H$, so we need
to see if $dxd$ can be equal to $x^{-1} y x$ where $y$ is 
one of the eight elements of $H  \setminus L$ of order $4$. 
%we can have
%\begin{eqnarray*}
%d^{-1}xd=\pm x^{-1}  \sma{rc}-z&z\\z&z\fma x&\mbox{ or }&
%\pm x^{-1} \sma{cr}z&z\\z&-z\fma x\\
%\mbox{ or }
%\pm x^{-1} \sma{rr}z&w\\-w&-z\fma x&\mbox{ or }&
%\pm x^{-1} \sma{rc}-z&w\\-w&z\fma x.
%\end{eqnarray*}
The sum of the antidiagonal entries
of $dxd$ is zero but standard calculations reveal that this only
happens for $x^{-1} y x$ if $v^8=1$. However, setting $v^8= 1$ yields that $x$ lies in $L$, a contradiction. 

Similarly
\[exe=
\frac{1}{2}\sma{rr}-i(v+v^{-1})&(v-v^{-1})\\(v-v^{-1})&i(v+v^{-1})\fma\]
and this time the off-diagonal entries are equal. 
Forcing this to occur for $x^{-1}yx$ implies that $v^8=1$.

We do not know a priori the trace of $dxe$.
%Instead for $\sma{cc}a&b\\c&d\fma\in\slc$ we consider the fixed points
%of the map $f(x)=(ax+b)/(cx+d)$ on the Riemann sphere
%$\overline{\C}=\C\cup\{\infty\}$, where the product of the fixed points
%is $-b/c$ and the sum is $(a-d)/c$. Now we have realised the image subgroup
%$\Sym_4$ in $\pslc$ as the standard group of Euclidean rotations, thus the
%fixed points $z_1,z_2$ of each element are antipodal, meaning that in
%$\overline{\C}$ they satisfy $z_1\overline{z_2}=-1$. However as our group
%is in standard position, the pairs of fixed points of each
%rotation lie on one of five great circles. On $\overline{\C}$ these are:
%the real axis (and $\infty$), the imaginary axis (and $\infty$), the line
%$x=y$ (and $\infty$), the line $x=-y$ (and $\infty$) and the unit circle.
%Thus we can see that for the first four cases the product of the fixed
%points is $-1,1,-i$ and $i$ respectively.
Thus instead of checking whether $dxe$ can be in $x^{-1}Hx$, 
we will calculate whether $y:= xdxex^{-1}$ can lie in $H$. Now, 
\[y= \sma{rr} z^2(iv^{-1}-v)&-v^2z^2(iv-v^{-1})\\
          -v^{-2}z^2(v+iv^{-1})&-z^2(iv+v^{-1})\fma.\]
We first note that no
entry of $y$ can be zero because $z,v\neq 0$ and $v^8\neq 1$: 
this leaves 32 possible elements of $H$. 
Now, the ratio $y_{1,2}/y_{2,1} = -iv^4$, and looking through these 
elements of $H$, this must lie in $\{\pm1, \pm i\}$. 
If $iv^4 = \pm i$ then $v^8=1$, a contradiction as before. If however
$-iv^4=\pm 1$ then $v$ is a primitive $16$th root of unity. We set a first possible $v$ to be the square root of $\sqrt{2}(1+i)/2$, and check over $\Q(\sqrt{2}, i, v)$ that each
odd power of $v$ yields an $x$ such that $x^{-1}Hx \cap HxH = \emptyset$. 
% then we consider the trace
%\[-z^2(v(1+i)+v^{-1}(1-i))\]
%and in order for this element to lie in $H$ it must have order 
%3, 4, 6 or 8. Now order 4 is equivalent to trace 0, which would imply that
%$v^2=(i-1)/(i+1)$ so again $v^8=1$. Orders 3 and 6 mean that the trace is
%$\pm 1$ and as $-2z^2=1$ we obtain
%\[v(1+i)+v^{-1}(1-i)=\pm 2,\] which implies that $v^4=1$ on squaring. We also
%need to consider the trace being equal to $\pm\sqrt{2}$, which this time on
%squaring tells us that $v^2-v^{-2}=-2i$ so squaring again gives us that
%$v^4+v^{-4}=-2$. But here we also know that $v^4=\pm i$ so this can only
%occur in characteristic 2.

Now we return to the case where $x = \diag[u, u^{-1}]$ 
%$x=\sma{cl}u&0\\0&u^{-1}\fma$ for
for $u^{16}=1$. %But we cannot have $u^8=1$ or else $x\in H$ anyway. 
If $u^8 = 1$ then $x \in H$, so $x$ has
order 16, and as in the previous paragraph 
we can define $u$ to be a square root of $\sqrt{2}(1+i)/2$, and 
check over $\Q(\sqrt{2}, i, u)$  that each odd power of 
$u$ yields an $x$ such that $HxH \cap x^{-1}Hx = \emptyset$. 
% and therefore the conjugate elements $d^{-1}xd$, $e^{-1}xe$
%cannot lie in any copy of $2\udot\Sym_4$, so we just need to consider
%\[xdxex^{-1}=\sma{rr} z^2(u-iu^{-1})&u^2z^2(u^{-1}-iu)\\
%-u^{-2}z^2(u+iu^{-1})&z^2(iu+u^{-1})\fma.\]
%We can repeat the previous argument because the product of the fixed points
%is $-iu^4$ and the trace is $z^2(u(1+i)+u^{-1}(1-i))$, so on putting
%$u=-v$ we find the same values as before. 
\end{proof} 

We must also show that these best possible sets
do actually occur.

\begin{co} \label{inf}
Let $p$ be a prime with $p \equiv 1 \bmod 16$. Then 
$\slp$ contains a subgroup plus coset
core  $S$ of size 64 with $|S^3|=224$.
\end{co}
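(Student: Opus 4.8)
The plan is to combine the abstract existence result of Theorem~\ref{s4} with a concrete realisation inside $\slp$ for the relevant primes. Theorem~\ref{s4} guarantees that whenever $\slp$ contains a subgroup $H\cong 2\udot\Sym_4$ together with an element $x$ with $x^2\in H$, $\langle H,x\rangle=\slp$, and $L=x^{-1}Hx\cap H$ of index $3$ in $H$, the set $S=H\cup xL$ has size $64$ and $|S^3|=224$. So the content of the corollary is purely an existence statement: for $p\equiv 1\bmod 16$ we must produce such an $H$ and such an $x$.

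First I would invoke Lemma~\ref{pslsubs}(iv) together with the hypothesis $p\equiv 1\bmod 16$ (so in particular $p\equiv 1\bmod 8$) to guarantee that $\slp$ really does contain a copy of $2\udot\Sym_4$; indeed the remark recalled just before Theorem~\ref{s4} states that $2\udot\Sym_4\le\slp$ precisely when $p\equiv\pm1\bmod 8$. Moreover, since $p\equiv 1\bmod 8$, the group $H$ is the genuine $p$-modular reduction of the characteristic-zero group $\overline{H}=\langle a,b\rangle$ used in the proof of Theorem~\ref{s4}, so I can transport the explicit matrices $a,b,c$ and the coset representatives $d,e$ directly into $\slp$ by reducing their entries mod $p$. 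This is where the congruence $p\equiv 1\bmod 16$ does real work: the element $x$ constructed in Theorem~\ref{s4} has order $16$ (either $x=\diag[u,u^{-1}]$ with $u$ a primitive $16$th root of unity, or $x=\antidiag[v,-v^{-1}]$ with $v$ a primitive $16$th root of unity), and such a root of unity exists in $\F_p$ exactly when $16\mid p-1$.

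The key steps, then, are: (1) fix the reduction mod $p$ of the explicit generators, so that $H\cong 2\udot\Sym_4$ sits inside $\slp$ and $L=\langle a,c\rangle$ is the chosen Sylow $2$-subgroup of index $3$; (2) choose a primitive $16$th root of unity $u\in\F_p^*$, which exists since $p\equiv 1\bmod 16$, and set $x=\diag[u,u^{-1}]$ (the diagonal case is cleanest since it visibly lies in $\slp$ and visibly satisfies $x^2\in H$ because $u^8=-1$ gives $x^2=\diag[u^2,u^{-2}]=a^{\pm2}\in L\le H$); (3) verify $x^{-1}Lx=L$ so that $[H:L]=3$ is matched by $x$ in the way Theorem~\ref{s4} requires, and check that $\langle H,x\rangle=\slp$, which follows because $H=2\udot\Sym_4$ is maximal in $\slp$ for these $p$ (as recalled before Theorem~\ref{s4}) and $x\notin H$ since $x$ has order $16$ while $H$ has exponent dividing $8$; (4) apply Theorem~\ref{s4} to conclude $|S|=64$ and $|S^3|=224$ for $S=H\cup xL$.

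The main obstacle is the bookkeeping in step (3): ensuring that the mod-$p$ element $x$ chosen really does generate $\slp$ together with $H$ and really does normalise $L$ with the correct index, rather than landing in some proper overgroup or collapsing $L$. The maximality of $2\udot\Sym_4$ in $\slp$ for $p\equiv\pm1\bmod 8$ disposes of the generation question cleanly, since any $x\notin H$ forces $\langle H,x\rangle=\slp$; and the fact that $x$ is a primitive $16$th root of unity ensures $x\notin H$ because the exponent of $2\udot\Sym_4$ is $8$. The normalisation $x^{-1}Lx=L$ and the index being exactly $3$ are exactly the structural facts established inside the proof of Theorem~\ref{s4} for this family of $x$, so the remaining effort is to confirm that these computations, carried out there over $\Q(\sqrt2,i,u)$, descend to $\F_p$ under reduction — which they do because every algebraic identity used (notably that $x$ with $u$ a primitive $16$th root of unity yields $HxH\cap x^{-1}Hx=\emptyset$) holds over $\F$ by the ``all algebraic consequences will be true over $\F$ too'' principle invoked in Theorem~\ref{s4}.
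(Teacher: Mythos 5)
Your proposal is correct and follows essentially the same route as the paper: reduce the characteristic-zero representation mod $p$ (using $p \equiv 1 \bmod 16$ to get $\sqrt{2}$, $i$ and a $16$th root of unity in $\F_p$), take an explicit $x$ built from a primitive $16$th root of unity, check $x^2 \in H$, $x \notin H$, generation via maximality and $x^{-1}Lx = L$, and then invoke Theorem~\ref{s4} -- the only difference being that the paper chooses the antidiagonal element $x = \antidiag[v, -v^{-1}]$, which has order $4$ with $x^2 = -I$, while you choose the diagonal order-$16$ element, and both cases are covered by the explicit disjointness checks at the end of the proof of Theorem~\ref{s4}. Three minor slips are worth correcting: the antidiagonal $x$ of Theorem~\ref{s4} has order $4$, not $16$ (only its entry $v$ has order $16$); your $x^2 = \diag[u^2, u^{-2}]$ is an \emph{odd} power of $a$ (of order $8$), not $a^{\pm 2}$, though it still lies in $\langle a \rangle \le L$; and $2 \udot \Sym_4$ has exponent $24$, not dividing $8$ -- the correct reason $x \notin H$ is simply that $16 \nmid 48$, so $H$ has no element of order $16$ -- while the claim $[H:L] = 3$ is not itself ``established inside'' Theorem~\ref{s4} (whose proof runs in the opposite direction, deriving the shape of $x$ from the index), so one should verify $x^{-1}Lx = L$ directly (a one-line computation for your diagonal $x$, as for the paper's antidiagonal $x$) and exclude indices $1$ and $2$ via self-normalisation of the maximal subgroup $H$ together with Proposition~\ref{ind12} or Lemma~\ref{unin}, exactly as the paper's proof does.
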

\begin{proof}
For such $p$ there are square roots of $-1$ and $2$ in
$\F_p$, and the characteristic zero representation of $2\udot\Sym_4$
given in Theorem~\ref{s4} embeds in $\slp$ and is maximal. 
Moreover, there exist elements $v\in \F_p^*$ of order 16. Thus set
$x = \antidiag[v, -v^{-1}] \notin H$, of order 4. Now $x^2=-I\in H$ and $\langle H,x\rangle=\slp$, and 
as the conjugate $x^{-1}mx$ of an arbitrary matrix
$m=\sma{cc}a&b\\c&d\fma$ is equal to $\sma{cc}d&-cv^2\\-bv^{-2}&a\fma$,
we see that $x^{-1}Lx=L$ so that $[H:L]\leq 3$. But this index cannot
be 1 or 2 by Proposition~\ref{ind12} so we can now apply Theorem~\ref{s4}.
\end{proof}  

We can now give our main result which follows immediately from this
and the two previous sections, given that all proper subgroups of
$\slp$ have now been covered.
\begin{co} \label{best}
Let $S$ be a subgroup-plus-two subset or subgroup plus coset core
 of $\slp$ with $\lgl S \rgl = \slp$. Then $|S^3|\geq |S|^{1+\delta}$ for
$\delta=(\log(7)-1)/6$. %\approx 0.30122$.
Moreover this value is obtained if and only if $H=2\udot\Sym_4$ with $x^2\in H$,
$[H : x^{-1}Hx\cap H] = 3$ and $S = H\cup (xH \cap Hx)$. In particular, 
subgroup-plus-two subsets do not attain the smallest 
possible value of $\delta$.
\end{co}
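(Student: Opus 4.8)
The plan is to run through the complete list of proper subgroups $H$ of $\slp$ supplied by Lemma~\ref{pslsubs} and, in each case, invoke the lower bound on $\delta$ already proved in Sections 4--6. First I would upgrade the classification of subgroups of $\pslp$ to one of subgroups $H \le \slp$ by appealing to Proposition~\ref{wk}\,(i): since $-I$ is the only involution of $\slp$, a subgroup whose image under $\pi$ is dihedral or exceptional cannot accommodate the several reflections or double transpositions that a genuine $\dih{2n}$, $\Alt_4$, $\Sym_4$ or $\Alt_5$ would require, and so must be the full even preimage. Combining this with the observation that cyclic subgroups (whether split, unipotent, or arising from the non-split torus in type~(iii)) are all dealt with uniformly, I conclude that every proper $H$ is either cyclic, conjugate into $U$, generalized quaternion $2\udot\dih{2n}$, or one of $2\udot\Alt_4$, $2\udot\Sym_4$, $2\udot\Alt_5$.

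With the enumeration in hand I would assemble the bounds: Proposition~\ref{cyc} gives $\delta \ge \log(3)/3$ for cyclic $H$; Theorem~\ref{uptr} gives $|S^3| > |S|^{3/2}$, hence $\delta > 1/2$, for $H \le U$; Proposition~\ref{dih} gives $\delta \ge \log(3)/5$ for generalized quaternion $H$; Proposition~\ref{a4} gives $\delta \ge \log(3)/5$ for $2\udot\Alt_4$; Proposition~\ref{a5} gives $\delta \ge \log(5)/\log(144)$ for $2\udot\Alt_5$; and Theorem~\ref{s4} gives $\delta \ge (\log(7)-1)/6$ for $2\udot\Sym_4$. A direct numerical comparison of $(\log(7)-1)/6 \approx 0.3012$ against $\log(3)/5 \approx 0.3169$, $\log(5)/\log(144) \approx 0.3238$, $\log(3)/3 \approx 0.5283$ and $1/2$ shows that the $2\udot\Sym_4$ bound is strictly the smallest, whence $|S^3| \ge |S|^{1 + (\log(7)-1)/6}$ in every case.

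For the equality clause, the comparison already forces the minimal $\delta$ to be realised only when $H = 2\udot\Sym_4$, and here I would quote the ``furthermore'' part of Theorem~\ref{s4}: equality holds precisely when $x^2 \in H$, the intersection $L = x^{-1}Hx \cap H$ has index $3$ in $H$, and $S = H \cup xL = H \cup (xH \cap Hx)$. To deduce the final assertion that subgroup-plus-two subsets cannot attain the minimum, I would note that in this extremal configuration $|S| = 64$, whereas the corresponding subgroup-plus-two subset $H \cup \{x^{\pm 1}\}$ has size $50$ (as recorded in the proof of Theorem~\ref{s4}); the two sets share the same triple product but differ in size, so the subgroup-plus-two subset yields a strictly larger exponent.

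I expect the only genuine content beyond bookkeeping to lie in the completeness of the enumeration in the first paragraph, namely the involution argument ruling out the odd versions of the dihedral and exceptional subgroups together with the verification that the cyclic and triangular families jointly exhaust type~(i) of Lemma~\ref{pslsubs}. The computational heart of the matter, the disjointness $HxH \cap x^{-1}Hx = \emptyset$ for $2\udot\Sym_4$, has already been settled in Theorem~\ref{s4}, so no further calculation is required here.
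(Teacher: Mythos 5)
Your proposal is correct and takes essentially the same route as the paper, whose entire proof is the one-line remark that the corollary ``follows immediately'' from Theorem~\ref{s4}, Propositions~\ref{cyc}, \ref{dih}, \ref{a4}, \ref{a5} and Theorem~\ref{uptr}, since all proper subgroups of $\slp$ have been covered. Your first paragraph merely makes explicit the completeness check (lifting Lemma~\ref{pslsubs} to $\slp$ via the unique involution of Proposition~\ref{wk}\,(i)) that the paper leaves implicit, and your size comparison $50$ versus $64$ for the final assertion is exactly the content of the ``furthermore'' clause of Theorem~\ref{s4}.
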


Recall that $(\log(7)-1)/6 \approx 0.30122$. 

\section{Further evidence}

We have proved that over all subgroup-plus-two subsets and subgroup plus coset
cores, those giving rise to the smallest value of $\delta$
are exactly the ones in Corollary~\ref{best}. But might they give the
best possible value over all symmetric generating
subsets $S$ containing 1 and with $S^3\neq\slp$,
thus providing us with the correct value of the Helfgott delta?
Clearly there are vastly many more subsets in this general form compared with
the restricted nature of the subgroup-plus-two subsets and subgroup plus coset
cores. Nevertheless it is our contention that the correct
value is much nearer $0.3012$ than the known lower bound
$1/3024\approx 0.0003$ in \cite{kwl}, and indeed these subsets might be
best possible. In order to provide further evidence for this, we show 
that these subsets are ``local minima'' in a very general sense.

To define this concept, first suppose that $S=H\cup xL$
is as in Corollary~\ref{best} and recall Proposition~\ref{More} which
states that if %we add an arbitrary element (and inverse) $y^{\pm 1}$ to form
$T=S\cup\{y^{\pm 1}\} \neq S$ then $|T^3| > |S^3|$. 
%However
%this on its own does not imply that the value of $\delta$ increases: for
%instance if $T^3$ has two more points than $S^3$ then we would obtain
%$|T|=66$ and $|T^3|=226$, giving a value for $\delta$ of around $0.2937$.
%Therefore we provide a more specific analysis to show that the value 
%of $\delta$ for $S$ cannot be improved by adding an element and its
%inverse.
We show that in fact $|T^3|$ is so much bigger than $|S^3|$ that the value of $\delta$ increases. In this section, for a subset $S$ of $\slp$, we write $\Delta(S)$ to denote
$\log(|S^3|)/\log(|S|)$ (this is one more than the value of $\delta$ for $S$). 
\begin{thm} \label{add}
Let $S=H\cup (xH \cap Hx)$ be as as in Corollary~\ref{best}, 
and $T=S\cup\{y^{\pm 1}\}$
for $y\notin S$. Then $\Delta(T) > \Delta(S)$. 
%\[\frac{\log(|T^3|)}{\log(|T|)}>\frac{\log(|S^3|)}{\log(|S|)}.\]%\approx 1.30122.\]
\end{thm}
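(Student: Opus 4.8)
The plan is to bound $\Delta(T)$ from above by controlling how much $|S|$ grows relative to $|S^3|$ when we adjoin $y^{\pm 1}$. We have $|S|=64$ and $|S^3|=224$, so $\Delta(S)=\log(224)/\log(64)=(\log(7)+5)/6$. Since $y\notin S$ and $S$ is symmetric, adjoining $\{y^{\pm 1}\}$ increases $|S|$ by either $1$ (if $y=y^{-1}$, i.e.\ $y=-I$, but $-I\in H\subseteq S$ already, so this cannot happen) or by $2$. Thus $|T|=66$, and $\log(|T|)=\log(66)$ is only slightly larger than $\log(64)=6$. The whole point is that $\Delta(T)=\log(|T^3|)/\log(|T|)$, and because the denominator barely moves while the numerator jumps substantially (by Proposition~\ref{More}, $|T^3|>|S^3|=224$), we expect $\Delta(T)>\Delta(S)$ to follow from a crude lower bound on $|T^3|$.

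First I would pin down $|T|$ exactly: since $y\notin S=H\cup xL$ and $y^{-1}\notin S\Leftrightarrow y\notin S$ by symmetry of $S$, we get $|T|=|S|+2=66$. Next I would extract a usable lower bound on $|T^3|$. By Proposition~\ref{More}, any $y\notin Hx\cap xH$ forces $|T^3|>|S^3|$, but I need a quantitative gap. The key observation is that $T^3\supseteq HyH\cup S^3$ and, by the case analysis in Proposition~\ref{More}, either $HyH=HxH$ (handled by the coset-core maximality, forcing $y\in Hx\cap xH\subseteq S$, contradiction) or $HyH$ is a new double coset disjoint from $S^3$. In the latter case $|HyH|\ge |H|=48$, so $|T^3|\ge |S^3|+|H|=224+48=272$. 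I would then verify $\log(272)/\log(66)>\log(224)/\log(64)$ by a direct numerical comparison, which holds comfortably.

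The main obstacle is ensuring the lower bound $|T^3|\ge 272$ is actually valid, i.e.\ that $HyH$ really is disjoint from $S^3$ and contributes a full copy of $|H|$. Proposition~\ref{More} establishes that if $y\notin Hx\cap xH$ then $T^3\ne S^3$, but I must argue the set-theoretic size increase carefully: I would observe that $T^3$ contains $H\cdot y\cdot H$, a union of complete right $H$-cosets (hence of size a multiple of $|H|/|H\cap y^{-1}Hy|$, at least $|H|$), and show this double coset cannot be absorbed into $H\cup HxH\cup x^{-1}Hx=S^3$. The one delicate point is that $HyH$ might partially overlap $x^{-1}Hx$ (which is not a union of right $H$-cosets), so the clean "$+48$" bound needs the disjointness argument from Proposition~\ref{More}; once $HyH\cap S^3$ is shown to miss at least one full $H$-coset, the inequality $|T^3|\ge 224+48$ follows and the numerical check closes the argument.
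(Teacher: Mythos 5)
Your case division is the right starting point, and your first branch is essentially sound, though the cleanest repair is stronger than your ``$+48$'': when $HyH \neq HxH$, the set $T^3$ contains the disjoint union $H \cup HxH \cup HyH$ of three distinct $(H,H)$-double cosets, and $|HyH| \geq 3|H| = 144$ (since $H \cong 2\udot\Sym_4$ is self-normalising and has no subgroup of index $2$), so $|T^3| \geq 336$; this is exactly how the paper handles that branch, and it sidesteps your worry about partial overlap between $HyH$ and $x^{-1}Hx$ entirely (in any case that overlap costs at most $|x^{-1}Hx| - |L| = 32$ elements, so your count could be fixed directly).

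The genuine gap is your dismissal of the branch $HyH = HxH$. You claim that ``coset-core maximality'' (Proposition~\ref{More}) forces $y \in Hx \cap xH \subseteq S$, a contradiction --- but this reverses the logic of that proposition, which says that \emph{if} $T^3 = S^3$ \emph{then} $y \in Hx \cap xH$; it does not say that $HyH = HxH$ implies $y \in Hx \cap xH$. Since $|HxH| = 144$ while $|xL| = 16$, there are many $y \in HxH \setminus (H \cup xL)$, and for such $y$ your argument yields only $|T^3| \geq |S^3| + 1 = 225$, which fails numerically: $\log(225)/\log(66) \approx 1.293 < \log(224)/\log(64) \approx 1.301 = \Delta(S)$. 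This case is where almost all of the work in the paper's proof lies. There, since $HyH = HxH = Hx^{-1}H = Hy^{-1}H$, Theorem~\ref{wlog} replaces $y$ by $z = hy$ with $z^2 \in H$, so that $H \cup HzH \cup z^{-1}Hz \subseteq T^3$ and $z^{-1}Hz \cap H$ is an $H$-conjugate of $L$ (index $3$); Theorem~\ref{s4} then gives $HzH \cap z^{-1}Hz = \emptyset$; a normaliser argument rules out $z^{-1}Hz = x^{-1}Hx$ (else $z \in Hx \cap xH$, putting $z$ in $S$); and finally a count shows $|z^{-1}Hz \cap x^{-1}Hx| \leq 24$ while at least $8$ of those elements are double counted against $H$ (any two Sylow $2$-subgroups of $H$ meet in order at least $8$), so $z^{-1}Hz$ contributes at least $16$ elements outside $S^3$, giving $|T^3| \geq 240$ and $\log(240)/\log(66) > 1.308 > \Delta(S)$. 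Without a quantitative argument of this kind in the $HyH = HxH$ case, your proof does not close.
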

\begin{proof}
For this $S$, we know that $S^3=H\cup HxH\cup x^{-1}Hx$, and that
$|HyH|\geq 3|H|=144$.  % by the comment at the start of the proof of
%Theorem~\ref{s4}. 
So if
$HyH\neq HxH$ then %these two sets are disjoint from each other and from $H$,
the set $H\cup HxH\cup HyH$, of size at least 336, is a subset of $|T^3|$,
which means that $\Delta(T)$ %$\log(|T^3|)/\log(|T|)$
 is much bigger than
$\Delta(S)$. %$\log(|S^3|)/\log(|S|)$. 
If $HyH = HxH$ then $HyH = Hy^{-1}H$, 
so by Theorem~\ref{wlog}
there is $z=hy$ with $z^2\in H$ such that $H\cup HzH\cup z^{-1}Hz \subseteq 
T^3$. Thus $z = h_1 x h_2$ for some $h_1, h_2 \in H$, and so $z^{-1}Hz \cap H = h_2^{-1}Lh_2$. 
%The group $z^{-1}Hz\cap H$ must have index 3 in $H$ or else there is
%similar estimate of 
 %$\Delta(T) %\log(|T^3|)/\log(|T|)
%\geq \log(240)/\log(66)\geq 
%1.3081$, so as 
Hence, the conditions of Theorem~\ref{s4} are satisfied
we conclude that $z^{-1}Hz$ is disjoint from $HzH$.

If $x^{-1}Hx=z^{-1}Hz$ then $xz^{-1}$ is in the normaliser of the self-normalising
subgroup $H$ so $z\in Hx$. But $xHx^{-1}=x^{-1}Hx$ and the same holds for
$z$, so repeating this argument gives $z\in xH$ and hence $z$ was in $S$
anyway, a contradiction. 

Thus we can assume that $x^{-1}Hx \neq z^{-1}Hz$, 
and that both of these subgroups are disjoint from $HxH=HzH$ and contained in
$T^3$. Now $z^{-1}Hz \cap H$ is conjugate to $L$, so 
$|z^{-1}Hz\cap H| = 16$. 
%  
%Also as $z^{-1}Hz\neq x^{-1}Hx$ we must have $z^{-1}Hz\cap x^{-1}Hx$ is a
%subgroup of index at least 2 in $x^{-1}Hx$, thus $z^{-1}Hz$ has at most 24
%elements in $x^{-1}Hx$. However, given subgroups $A,S$ of a group $B$
%with $[B:A]=i$ the index $[B\cap S:A\cap S]\leq i$. Thus on setting $B=H$ and
%$A=z^{-1}Hz\cap H$ we obtain $i=3$, so if $S$ is $x^{-1}Hx$ we see that
%$z^{-1}Hz\cap(x^{-1}Hx\cap H)$ has index at most 3 in $x^{-1}Hx\cap H$.
%As the latter has order 16, which of course is divisible by 1 and 2 but not%
%3, this means that 
Since $z^{-1}Hz\neq x^{-1}Hx$, the group $z^{-1}Hz\cap x^{-1}Hx$ 
has index at least 2 in $x^{-1}Hx$, thus $z^{-1}Hz$ has at most 24
elements in $x^{-1}Hx$. Now, any two Sylow $2$-subgroups of $H$ intersect in a 
group of order $8$, so $z^{-1}Hz \cap (H \cap x^{-1}Hx)$ has order at least 8. 
Hence, at least 8 elements of $z^{-1}Hz$ have been double
counted when looking at which ones lie in $H$ and in $x^{-1}Hx$, so at
most 32 elements of $z^{-1}Hz$ are in $x^{-1}Hx\cup H$. This leaves at
least 16 extra elements, making $|T^3|\geq 240$ and $|T|=66$, so
$\Delta(T) > 1.3081$.
\end{proof}

Another reasonable definition of local minimum is that the $\delta$
increases under the removal of any element and its inverse.

\begin{thm} \label{sub}
Let  $S=H\cup xL$ be as in Theorem~\ref{best}, and
let $T=S\setminus\{z^{\pm 1}\}$ for some $z\in S$. Then
$\Delta(T) > \Delta(S)$. 
%\[\frac{\log(|T^3|)}{\log(|T|)}>\frac{\log(|S^3|)}{\log(|S|)} \sim 1.30122.\]
\end{thm}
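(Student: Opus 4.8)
The plan is to show that deleting a symmetric pair of elements does not change the triple product at all, i.e. that $T^3=S^3$, and then to finish purely by monotonicity. Recall from Corollary~\ref{best} that $|S|=64$, that $S^3=H\cup HxH\cup x^{-1}Hx$ with these three pieces accounting for $|S^3|=224$, and that $xL=Lx$ is a symmetric set disjoint from $H$ with $|xL|=16$. Since $S$ is symmetric, $T=S\setminus\{z^{\pm1}\}$ deletes at most two elements, so $|T|\le 63<64=|S|$; we may assume $z\neq 1$. As the map $t\mapsto\log(224)/\log(t)$ is strictly decreasing for $t>1$, once we know $|T^3|=224$ we get $\Delta(T)=\log(224)/\log|T|>\log(224)/\log 64=\Delta(S)$ at once. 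Because $T\subseteq S$ forces $T^3\subseteq S^3$, the whole theorem reduces to the reverse inclusion $H\cup HxH\cup x^{-1}Hx\subseteq T^3$; the heart of the matter is that $S$ is so redundant that each element of $S^3$ has many factorisations through $S$, only a few of which can involve $z$ or $z^{-1}$.

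I would split into the exhaustive and disjoint cases $z\in xL$ and $z\in H$ (disjoint since $xL\cap H=\emptyset$). The case $z\in xL$ is the easy one: here $H\subseteq T$ is untouched, and for every $w\in xL$, writing $w=x\ell$ with $\ell\in L\subseteq x^{-1}Hx$, one checks $w^{-1}Hw=x^{-1}Hx$ and $HwH=HxH$ (as $\ell\in H$). Since each $w\in xL$ is non-involutory, $z\neq z^{-1}$, so $14$ elements $w\in xL$ survive in $T$, each with $w,w^{-1}\in T$; choosing any one gives $x^{-1}Hx=w^{-1}Hw\subseteq T^3$ and $HxH=HwH\subseteq T^3$, while $H=H\cdot 1\cdot 1\subseteq T^3$. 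Hence $T^3=S^3$.

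The case $z\in H$ is where I expect the main obstacle, because deleting $z,z^{-1}$ removes the only ``direct'' representatives $x^{-1}zx$ and $x^{-1}z^{-1}x$ of two elements of $x^{-1}Hx$, conjugation by $x$ being injective. I would recover these using the coset $xL\subseteq T$, which is now untouched: a general product $w_1^{-1}hw_2$ with $w_i=x\ell_i\in xL$ and $h\in H\setminus\{z^{\pm1}\}$ equals $\ell_1^{-1}(x^{-1}hx)\ell_2\in x^{-1}Hx$, so to hit a target $g_0=x^{-1}z^{\pm1}x$ it suffices that the double coset $Lg_0L$, taken inside the group $x^{-1}Hx$ which contains $L$, meets the complement of $\{x^{-1}zx,x^{-1}z^{-1}x\}$; this holds since $|Lg_0L|\ge|L|=16>2$. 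The pieces $HxH$ and $H$ are then handled by a multiplicity count: each element of $HxH$ has exactly $|L|=16$ representations $h_1xh_2$ and each element of $H$ has $48$ representations $h_1h_2$ (followed by the factor $1$), of which at most four can involve $z$ or $z^{-1}$, leaving valid factorisations through $T$. Thus again $T^3=S^3$, which completes the proof. The only additional care is the degenerate possibility $z\in\{\pm I\}$, where just one element is deleted and all of the counts above only improve.
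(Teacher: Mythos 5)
Your proof is correct, and while its skeleton matches the paper's (show $T^3=S^3$, then conclude by monotonicity of $t\mapsto\log(224)/\log(t)$; split into $z\in xL$ and $z\in H$, with $z\in xL$ handled essentially identically in both proofs), your treatment of the main case $z\in H$ is genuinely different. The paper recovers $H$ via the classical lemma that $|A|+|B|>|G|$ forces $AB=G$, recovers $HxH$ by inserting $xl\cdot l^{-1}h^{\pm 1}$ for a suitable $l\in L$, and recovers the two missing conjugates $x^{-1}h^{\pm 1}x$ by finding $m\in L$ with $m^{-1}hm\neq h^{\pm 1}$ -- a step that needs a subcase split ($h\in L$ versus $h\in H\setminus L$, where $h\in L$ uses $x^{-1}Lx=L$) and the specific structural fact that in $\Sym_4$ at most $8$ elements of a Sylow $2$-subgroup centralise or invert a given outside element. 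You replace all of this with uniform counting: every element of $HxH$ has exactly $|H|^2/|HxH|=|L|=16$ factorisations $h_1xh_2$ and every element of $H$ has $48$ factorisations $h_1h_2\cdot 1$, with at most $2+2=4$ spoiled by $z^{\pm 1}$ since the first (resp.\ second) components across the fibre are pairwise distinct; and for the two missing elements of $x^{-1}Hx$ you use \emph{independent} left and right translates $\ell_1,\ell_2\in L$, so that the double coset $Lg_0L\subseteq x^{-1}Hx$ of size $\geq |L|=16>2$ must contain some $x^{-1}hx$ with $h\in H_0$, yielding $g_0=(x\ell_1)^{-1}h(x\ell_2^{-1})\in T^3$. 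This two-sided trick is exactly what lets you avoid the centraliser computation: the paper's one-sided choice $\ell_1=\ell_2=m$ is what forces the condition $m^{-1}hm\neq h^{\pm 1}$. Your argument is accordingly more general -- it works verbatim for any subgroup plus coset core with $[H:L]\geq 3$, $x^2\in L$ and $|L|\geq 5$, with no reference to $2\udot\Sym_4$ -- at the price of being slightly less elementary in flavour than the paper's explicit manipulations. Two small remarks: your assertion ``we may assume $z\neq 1$'' deserves a word (if $z=1$ then $1\notin T$, so replace $h_1h_2\cdot 1$ by $H=H_0^3$ and note the two targets in $x^{-1}Hx$ become $1\in H_0^3$; the paper likewise passes over this case silently), and your claim that elements of $xL$ are non-involutory should cite that $-I$ is the unique involution of $\slp$ and lies in $H$, disjoint from $xL$.
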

\begin{proof}
First assume that $z\in H$ and that $z\neq z^{-1}$ (so that we have removed
two distinct points). We will write $h$ for $z$ and set $H_0=H-\{h^{\pm 1}\}$.
We will show that $T^3=S^3$, which we know to be $H\cup HxH\cup x^{-1}Hx$.

A very old and straightforward result states that if $A,B$ are subsets of a
finite group $G$ with $|A|+|B|>|G|$ then $AB=G$. Thus $H=H_0^2\subseteq T^3$.
In order to show that $HxH\subseteq T^3$, %we clearly have $H_0xH_0$ so need
it suffices to show that $T^3$ contains $H_0xh^{\pm 1}, h^{\pm 1}xH_0$ and $h^{\pm 1}xh^{\pm 1}$ (for all choices
of signs). We choose any $l\in L$ %apart from $I,h^{\pm 2}$ 
such  that
$l^{-1}h^{\pm 1}$ is not equal to $h$ or $h^{-1}$ and thus is in $H_0$. Then
$H_0xh^{\pm 1}=H_0\cdot xl \cdot l^{-1}h^{\pm 1} \subseteq H_0xLH_0$ and
so certainly is in $T^3$.

This also applies to $h^{\pm 1}xH_0$ so we are left with $x^{-1}Hx$. We 
clearly already have $x^{-1}H_0x\subseteq T^3$ so just need $x^{-1}h^{\pm 1}x$.
If $h\in L$ then $x^{-1}h^{\pm 1}x\in L \subseteq T^3$, so assume that 
$h\in H-L$. Then we are done if we can find $m\in L$ such that
$m^{-1}hm\neq h^{\pm 1}$, because 
$x^{-1}h^{\pm 1}x=x^{-1}m\cdot m^{-1}h^{\pm 1}m\cdot m^{-1}x\in 
xL\cdot H_0\cdot xL$. It is easy to check that in $\Sym_{4}$, any element 
$\overline{h}$ outside a Sylow 2-subgroup $\overline{L}$ satisfies 
$|\mathrm{C}_{\Sym_{4}}(\overline{h}) \cap \overline{L}| \le 2$, 
so the number of elements of $L$ that either centralise or invert 
$h \in H \setminus L$ is at most $8$, and such an $m$ exists. 
%Now we can picture $H$ as the
%(image in $\mathrm{PSL}_2$ of the) symmetries of a cube and $L$ as the dihedral
%subgroup containing the rotations around the $z$-axis in $H$. Thus
%conjugating $h$ by $m$ means the conjugate element has its axis of rotation
%equal to the image under $m$ of the axis of rotation of $h$. As $h$ and
%$h^{-1}$ have the same axis of rotation, it is clear by picturing the
%symmetries that for any $h\in H-L$ there is an element of $L$
%moving the axis of $h$ away from itself.

We next consider when $H_0$ is formed by removing just $-I$ from
$H$. The same arguments as above apply to
show that $H$ and $HxH$ are in $T^3$, and when we compare $x^{-1}H_0x$ to
$x^{-1}Hx$ we see we are only missing $-I$ which is already in $H$ and so 
in $T^3$.

Finally, consider what happens if we remove an element $lx$ and
its inverse from $Lx=xL$ to form $T$. On taking $m\in L$ such that
$mx\neq (lx)^{\pm 1}$ and thus is in $T$, we obtain $HxH=Hm^{-1}\cdot mx\cdot
H=HmxH\subseteq T^3$ and $x^{-1}Hx=(mx)^{-1}Hmx\subseteq T^3$, with
$H\subseteq T^3$ already.
\end{proof} 

We now obtain our final result on local minima, where this time we 
allow ourselves
to remove an element and its inverse from $S$, then replace it by an
arbitrary element and inverse from outside $S$ to form $T$.
\begin{co}
Let $S$ be as in Theorem~\ref{best}, let $1 \neq s \in S$ and
$y\in \slp \setminus S$, and let $T=(S \setminus\{s^{\pm 1}\})\cup \{y^{\pm 1}\}$. 
Then $\Delta(T) > \Delta(S)$. 
%\[\frac{\log(|T^3|)}{\log(|T|)}>\frac{\log(|S^3|)}{\log(|S|)}=1.30122.\]
\end{co}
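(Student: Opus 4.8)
The plan is to split the move into its two halves --- the deletion of $s^{\pm 1}$, which is already understood through Theorem~\ref{sub}, and the insertion of $y^{\pm 1}$, which I would control by re-running the argument of Theorem~\ref{add} while bookkeeping the one or two elements of $H$ that the deletion may have destroyed. Write $S' = S \setminus \{s^{\pm 1}\}$ and $H_0 = H \cap S'$. Since the only involution of $\slp$ is $-I \in H \subseteq S$ and $y \notin S$, the element $y$ is not its own inverse, so the insertion adds exactly two points while the deletion removes two (or one, when $s = -I$); hence $|T| = 64$ if $s \neq -I$ and $|T| = 65$ if $s = -I$, in which latter case $H_0 = H \setminus \{-I\}$ has $47$ elements. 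By the proof of Theorem~\ref{sub}, $S'^3 = S^3 = H \cup HxH \cup x^{-1}Hx$, so $T^3 \supseteq S^3$ and $|T^3| \geq 224$; moreover $|H_0| \geq 46 > 24$ gives $H_0 H_0 = H$, so that $H \subseteq S'^2 \subseteq T^2$. Since $|T| \leq 65$ always and $\log(238)/\log(65) > \log(224)/6 = \Delta(S)$, it suffices to prove $|T^3| \geq 238$.

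For the lower bound I would split on the double coset of $y$. If $HyH \neq HxH$, then $yH \subseteq y\,S'^2 \subseteq T^3$; as $yH$ lies in $HyH$ it is disjoint from $H \cup HxH$, and it meets $x^{-1}Hx$ in at most $|L| = 16$ points (any two elements of $yH \cap x^{-1}Hx$ differ by an element of $H \cap x^{-1}Hx = L$), so $yH$ supplies at least $32$ elements outside $S^3$ and $|T^3| \geq 256$.

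The main case, and the main obstacle, is $HyH = HxH$, where $yH, Hy \subseteq HxH \subseteq S^3$ and the new elements must come from a conjugate of $H$. Here I would examine both conjugates $P = yHy^{-1}$ and $Q = y^{-1}Hy$. Because $x^2 \in H$ forces $HxH = Hx^{-1}H$, the double coset $HyH$ is inverse-closed, so Theorem~\ref{wlog} applied to $(H,y)$ (noting $\lgl H,y\rgl = \lgl H,x\rgl = \slp$ as $y \in HxH$) yields $z = h_0 y$ with $z^2 \in H$, $\lgl H, z\rgl = \slp$ and $z^{-1}Hz = Q$; since $[H : H \cap Q] = |HyH|/|H| = 3$, Theorem~\ref{s4} gives $HxH \cap Q = HzH \cap z^{-1}Hz = \emptyset$, and the same argument applied to $(H, y^{-1})$ gives $HxH \cap P = \emptyset$. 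Now $P, Q \neq H$ (as $y \notin N_{\slp}(H) = H$), and $P = Q$ precisely when $y^2 \in H$; if in addition $P = Q = x^{-1}Hx$ then $y^{-1}Hy = x^{-1}Hx$ forces $xy^{-1} \in N_{\slp}(H) = H$, whence $y \in Hx$, and then $y^2 \in H$ forces $y \in Lx = xL \subseteq S$, contradicting $y \notin S$. Thus at least one of $P, Q$, say $R$, is a conjugate of $H$ distinct from both $H$ and $x^{-1}Hx$ and disjoint from $HxH$. The matching set $yH_0y^{-1}$ or $y^{-1}H_0 y$ lies in $T^3$ and inside $R$; as $R$ meets $H$ in a Sylow $2$-subgroup of order $16$, meets $x^{-1}Hx$ in a proper (hence at most $24$-element) subgroup, and these two meet in order at least $8$ (any two Sylow $2$-subgroups of $H$ intersect in order $8$), inclusion--exclusion gives $|R \cap S^3| = |R \cap (H \cup x^{-1}Hx)| \leq 16 + 24 - 8 = 32$. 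With $|H_0| \geq 46$ this leaves at least $14$ elements of $R$ outside $S^3$, so $|T^3| \geq 238$.

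Finally, since $|T| \leq 65$ and $|T^3| \geq 238$, I would read off $\Delta(T) = \log(|T^3|)/\log(|T|) \geq \log(238)/\log(65) > \log(224)/6 = \Delta(S)$, as required. The delicate point is the case $HyH = HxH$: one must feed $y$ and $y^{-1}$ separately through Theorem~\ref{wlog} and Theorem~\ref{s4} so that both $yHy^{-1}$ and $y^{-1}Hy$ are seen to avoid $HxH$, and then rule out the degenerate coincidence $yHy^{-1} = y^{-1}Hy = x^{-1}Hx$, which is exactly what would return $y$ to $S$.
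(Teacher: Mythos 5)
Your proof is correct, and its skeleton is the paper's: remove $s^{\pm 1}$ and use Theorem~\ref{sub} to keep $S'^3=S^3$, then show the insertion of $y^{\pm 1}$ contributes at least $14$ elements outside $S^3$, so that $|T^3|\geq 238$ while $|T|\leq 65$, and $\log(238)/\log(65)>\log(224)/6=\Delta(S)$. These numerical milestones ($14$, $238$, $65$, the bound $16+24-8=32$ on $|R\cap(H\cup x^{-1}Hx)|$ via the order-$8$ intersection of Sylow $2$-subgroups of $2\udot\Sym_4$) coincide exactly with the paper's.

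Where you genuinely diverge is in how the insertion step is executed, and in both places your version is tighter than the paper's. First, instead of the paper's bookkeeping with $H_0yH_0$ (it argues $|H_0yH_0|\geq |H|-2=46$ and calls these ``extra'' elements when $HyH\neq HxH$), you note $|H_0|+|H_0|>|H|$ forces $H_0H_0=H$, so all of $H$ lies in $T^2$ and the full coset $yH$ lies in $T^3$; you then bound $|yH\cap x^{-1}Hx|\leq |L|=16$ by the coset-of-$L$ argument. This is not merely slicker: the paper's ``at least 46 extra elements'' silently ignores that $H_0yH_0$ may meet $x^{-1}Hx\subseteq S^3$ (in up to $32$ points), so its count strictly guarantees only $14$ new elements, whereas yours cleanly gives $32$. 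Second, in the case $HyH=HxH$ you feed both $y$ and $y^{-1}$ through Theorem~\ref{wlog} and Theorem~\ref{s4}, so that \emph{both} $P=yHy^{-1}$ and $Q=y^{-1}Hy$ are seen to avoid $HxH$, and you exclude the coincidence $P=Q=x^{-1}Hx$ by showing it would force $y\in Lx\subseteq S$. The paper's corollary instead leans on the proof of Theorem~\ref{add}, which considers only $z^{-1}Hz=y^{-1}Hy$ and dismisses the possibility $y^{-1}Hy=x^{-1}Hx$ with ``$z$ was in $S$ anyway, a contradiction'' --- but $z\in xL\subseteq S$ does not contradict $y\notin S$: for $y=hx$ with $h\in H\setminus L$ one has $y\notin S$, $HyH=HxH$ and $y^{-1}Hy=x^{-1}Hx$, and it is precisely the other conjugate $yHy^{-1}=h(x^{-1}Hx)h^{-1}$ that supplies the $\geq 16$ new elements. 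Your two-sided $P/Q$ analysis covers exactly this configuration, so your write-up proves the corollary while quietly repairing the soft spot it would otherwise inherit from Theorem~\ref{add}.
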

\begin{proof}
By Theorem~\ref{sub}, if we set 
$Z=S \setminus \{s^{\pm 1}\}$ then $Z^3=S^3$.
As $|T|=|S|$ or $|S|+1$ (the latter occurring only if we remove $-I$), we
will be done on showing that $|T^3|\geq |S^3|+14$ by finding elements that are 
not in $S^3$ but which can be made out of
$Z$ and $y^{\pm 1}$. On examining the proof of Theorem~\ref{add}, 
we note that elements 
in $(S\cup\{y^{\pm 1}\})^3 \setminus S^3$ came from $HyH$ or $Hy^{-1}H$
or $y^{-1}Hy$. Thus if $s \not\in H$ then these will also
be in $T^3$.

We now suppose that $s\in H$ and let $H_0=H\setminus\{s^{\pm 1}\}=Z\cap H$. First
say that $HyH$ (or $Hy^{-1}H$ by changing $y$ to $y^{-1}$) provides new
elements for $(S\cup\{y^{\pm 1}\})^3$. As $|HyH|$ is at least $3|H|$, the double coset
$HyH$ contains at least 3 left cosets of $H$. This implies that
$|H_0yH|\geq |H|$ because although we could be missing the two left cosets
$syH$ and $s^{-1}yH$ when we drop from $HyH$ to $H_0yH$, there will still be
at least one left over. This in turn means that $|H_0yH_0|\geq |H|-2$ and so
there are at least 46 extra elements in $T^3$.

Finally if our extra elements came from $y^{-1}Hy$ then we still
have all but two in $y^{-1}H_0y$, and in the proof of Theorem~\ref{add} we
showed that the former set introduces at least 16 extra elements, so the
latter provides at least 14.
\end{proof}

It might well be so that our subsets $S$ remain best possible
under the removal or addition of two (or more) elements and their inverses,
although we have not examined this
owing to the lengthier number of cases to consider.

\section{Computer calculations}

The main computer calculation that we did was an exhaustive search through 
$\mathrm{SL}(2, 5)$ looking for the sets $S$ of minimal tripling. There are $2^{120}$ 
potential such subsets, so we implemented a backtrack search as follows. For 
convenience we split the search in two, one for sets $S$ containing $-I$, and 
one for the remaining sets $S$. The set $S$ was initialised to $\{I\}$ 
or $\{\pm I\}$ and was then grown by adding elements $x, x^{-1}$ 
at each branch point.  For the first few levels of the search tree (up 
to depth around 3) we only chose $\{x, x^{-1}\}$ up to conjugacy under 
the subgroup of $\mathrm{SL}(2, 5)$ that conjugated each element of $S$ to itself or its inverse. 
After this we chose all possible $x$, as in $\mathrm{SL}(2, 5)$ the stabiliser of a triple of 
elements and their inverses
is likely to be just $\langle -I \rangle$. The search stored the corresponding $\delta$ whenever $S$  generated $\mathrm{SL}(2, 5)$, and backtracked when $S^3$ became equal to $\mathrm{SL}(2, 5)$. The following result has since been confirmed independently by Chris Jefferson, who also showed that all sets $S$ attaining the bound are conjugate under $\mathrm{GL}(2, 5)$. 

\begin{thm}
Let $S$ be a subset of $\mathrm{SL}(2, 5)$ such that $1 \in S$, $S = S^{-1}$ and $\langle S \rangle = \mathrm{SL}(2, 5)$. Then $|S^3| \geq |S|^{1.3925}$, and the set $S$ closest to this bound has size $30$ with $|S^3| = 114$.
\end{thm}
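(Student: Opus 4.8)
The plan is to establish this by exhaustive computation, since the extremal sets have no evident algebraic structure (as noted in the introduction) and a purely structural argument seems out of reach. First I would cut down the search space using symmetry. Because $S=S^{-1}$ and $1\in S$, a candidate set is determined by its intersection with the non-identity elements, which by Proposition~\ref{wk}(i) consist of the unique involution $-I$ together with $59$ inverse-pairs $\{g,g^{-1}\}$ (as $|\slp|=120$ when $p=5$). Thus each symmetric set containing $1$ is specified by a subset of these $59$ pairs and a choice of whether $-I\in S$, reducing the naive $2^{120}$ subsets to roughly $2^{60}$ candidates; I would split the search into the cases $-I\in S$ and $-I\notin S$, growing $S$ from $\{\pm I\}$ or $\{I\}$ by adjoining one inverse-pair at a time. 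The constraint that matters is $S^3\neq\slp$ (matching the definition of the Helfgott delta). Since $1\in S$ gives $S\subseteq S^2$, the elementary fact that $|A|+|B|>|G|$ implies $AB=G$, applied with $A=S$ and $B=S^2$, forces $S^3=\slp$ whenever $|S|>60$; hence every relevant set has $|S|\leq 60$ and the search has bounded depth.

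The two devices that make the search feasible are conjugacy reduction and pruning. As $|gSg^{-1}|=|S|$ and $|(gSg^{-1})^3|=|S^3|$, the quantity $\Delta(S)=\log(|S^3|)/\log(|S|)$ is invariant under conjugation by any $g\in\glp$, so I would explore sets only up to conjugacy: at each early node, rather than trying every inverse-pair I take one representative from each orbit of pairs under the setwise stabiliser $\{g: gsg^{-1}\in\{s,s^{-1}\}\text{ for all }s\in S\}$, which one checks is a subgroup and which carries $S\cup\{x^{\pm 1}\}$ to $S\cup\{(gxg^{-1})^{\pm 1}\}$, a conjugate of each omitted extension. For pruning I would backtrack as soon as $S^3=\slp$, since every superset then also has full triple product and is excluded from the minimisation, recording $\Delta(S)$ at every node with $\langle S\rangle=\slp$ and $S^3\neq\slp$.

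Completeness holds because any relevant $S$ is reached before any pruning occurs: each subset obtained along its insertion path has triple product contained in $S^3\subsetneq\slp$, so no ancestor triggers the $S^3=\slp$ backtrack, and the conjugacy reduction only discards sets conjugate to retained ones, which have equal $\Delta$. Running the search over both cases and minimising the recorded values of $\Delta(S)$ then yields the stated bound, with the minimiser satisfying $(|S|,|S^3|)=(30,114)$, so that $\Delta(S)=\log(114)/\log(30)\approx 1.3925$.

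The main obstacle is computational rather than conceptual: the $\sim 2^{60}$ symmetric sets are far too many to enumerate, so the whole argument rests on the orbit reduction at the top of the tree, together with the $S^3=\slp$ pruning (backed by the a priori depth bound $|S|\leq 60$), being strong enough to reduce the explored tree to a tractable size. I would expect that bounded depth to be what ultimately controls the cost. Since the conclusion is only as reliable as the code, I would want it cross-checked---both by the independent verification attributed to Jefferson (who further shows the extremal sets are unique up to $\glp$-conjugacy) and by directly confirming that the reported pair $(30,114)$ is realised by an explicit symmetric generating set---to give confidence that no set with smaller $\Delta$ has been overlooked.
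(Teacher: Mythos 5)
Your proposal is correct and is essentially the paper's own method: the authors likewise run a backtrack search split into the cases $-I\in S$ and $-I\notin S$, grow $S$ from $\{I\}$ or $\{\pm I\}$ by inverse-pairs, restrict the first few levels to orbit representatives under the subgroup conjugating each element of $S$ to itself or its inverse, record $\Delta(S)$ whenever $S$ generates, and backtrack exactly when $S^3=\mathrm{SL}(2,5)$. Your additional touches --- the explicit depth bound $|S|\leq 60$ via the $|A|+|B|>|G|$ trick and the monotonicity argument for completeness of the pruning --- are sound refinements that the paper leaves implicit.
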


One such optimal $S$ is the following elements and their inverses
$$\begin{array}{c}
%\left( \begin{array}{cc}
%4 & 1 \\
%4 & 0
%\end{array} \right),
%\left(\begin{array}{cc}
%[4 4]
%[1 0]
%\end{array} \right),
\left( \begin{array}{cc}
2 & 0\\
0 & 3
\end{array} \right),
\left( \begin{array}{cc}
3 & 0 \\
1 & 2 \\
\end{array} \right),
\left( \begin{array}{cc}
0 & 3 \\
3 & 2 \\
\end{array} \right),
%
%\left( \begin{array}{cc}  
%  [0 2]
%    [2 2]
%\end{array} \right),
%
\left( \begin{array}{cc}
4& 3\\
 2 & 3 
\end{array} \right),
%
%\left( \begin{array}{cc}
%    [4 4]
%    [4 3]
%\end{array} \right),
%
\left\langle \left( \begin{array}{cc}
1 & 1\\
4 & 0 \\
\end{array} \right) \right\rangle, \\
\left\langle \left( \begin{array}{cc}
1 & 4 \\
 1 & 0 \\
\end{array} \right) \right\rangle,
\left( \begin{array}{cc}
3 & 3 \\
3 & 0 \\
\end{array} \right),
%
%\left( \begin{array}{cc}
%    [3 2]
%    [2 0]
%\end{array} \right),
%
\left( \begin{array}{cc}
2 & 3 \\
2 & 1 \\
\end{array} \right),
\left\langle \left( \begin{array}{cc}
1 & 1 \\
1 & 2 \end{array} \right) \right\rangle. \end{array}
$$

%The cyclic subgroups generated by the elements of order $6$ are contained in $S$, and half of the cyclic subgroups generated by the elements of order $10$ are contained in $S$. 

For larger $p$, we decided that 
there was no point examining extremely small subsets of $\slp$ systematically, since it is an easy exercise to see that any $S$ of order $5$ (say) would satisfy $|S^3|> 10 > 5^{1.4}$ (say), and hence never be a set of minimal $\delta$. Thus the sets $S$ need to be reasonably large, and the combinatorial explosion in the number of possible sets would seem to preclude a systematic search.

Similarly, one would not expect a random subset of $\slp$ to have a low value of $\delta$, so extensive random sampling does not seem likely to be useful.

The final obvious trick for computational exploration would be to ``evolve'' sets $S$ by adding elements whenever $S^3$ doesn't grow (or possibly doesn't grow by too much), and otherwise interchanging elements in $S$ for elements outside $S$ when this reduces or stabilises the size of the triple product. However, this would need to be very carefully designed to avoid the search getting stuck at local minima for $\delta$ that are not global minima. 

We finish with a brief word on subsets with small triple products in
other infinite families of finite simple (or almost simple) groups. First
we mention $\mathrm{PSL}(2, p)$: Helfgott's result is sometimes stated for 
this case
but in general one works in $\slp$ for added convenience. However it is
certainly straightforward to go from $\slp$ to $\pslp$. Suppose that
we know a value of $\delta$ where $|A^3|\geq |A|^{1+\delta}$ for any
symmetric generating subset $A$ containing 1 and with $A^3\neq\slp$.
Now suppose there exists $B\subseteq\pslp$ which is symmetric, generates,
contains 1 but with $B^3\neq\pslp$. Then the pullback $A=\pi^{-1}(B)$ is also
symmetric, generates $\slp$, contains 1 and satisfies $A^3\neq \slp$.
Moreover $|A|=2|B|$ and $|A^3|=2|B^3|$ because 
$(\pi^{-1}(B))^3=\pi^{-1}(B^3)$ for surjections $\pi$. Thus 
$|B^3|\geq |A|^{1+\delta}/2\geq 2^\delta |B|^{1+\delta}\geq |B|^{1+\delta}$,
meaning that the Helfgott delta in $\pslp$ is at least that for $\slp$.
For instance our subset in Theorem~\ref{s4} gives rise to a
subset $B$ of $\pslp$ of size 32 with $|B^3|=112$, thus giving an upper
bound of $0.3614$ for the Helfgott delta in $\pslp$.

In addition to the Helfgott delta,
the general results of \cite{ps} and \cite{bgt} show that
for any family of finite simple groups of Lie type of bounded rank, 
there exists some some delta holding for all groups
in the family. 
However this breaks down without bounded rank, for instance in \cite[Section 14]{ps}
counterexamples are given for $\Sym_n$ and for 
$\mathrm{SL}(n,p)$ where $n$ varies. 
Interestingly, the first counterexample is a sequence of subgroup-plus-two
subsets, and the other is what we would call here
subgroup-plus-four subsets.

\section{Acknowledgements}
We thank the anonymous referee for their many helpful suggestions, which have greatly improved the paper. Colva Roney-Dougal acknowledges the support of EPSRC grant EP/I03582X/1.


\begin{thebibliography}{99}

\bibitem{soda} L.\,Babai, N.\,Nikolov and L.\,Pyber,
Product growth and mixing in finite groups, in: {\it Proceedings of the
Nineteenth Annual ACM-SIAM Symposium on Discrete Algorithms},
ACM, New York, 2008, 248--257.


\bibitem{MAGMA}
W.~Bosma, J.~Cannon and C.~Playoust, {\it The Magma algebra system I:
The user language}, J. Symbolic Comput. {\bf 24} (1997),
235--265.  

\bibitem{bg} J.\,Bourgain and A.\,Gamburd,
{\it Uniform expansion bounds for Cayley graphs of $\mathrm{SL}_2(\mathbb F_p)$},
Ann. of Math. {\bf 167} (2008), 625--642.

\bibitem{bgt} E.\,Breuillard, B.\,Green and T.\,Tao,
{\it Approximate subgroups of linear groups},
Geom. Funct. Anal. {\bf 21} (2011), 774--819.

\bibitem{hlf} H.\,A.\,Helfgott,
{\it Growth and generation in $\mathrm{SL}_2(\Z/p\Z)$},
Ann. of Math. {\bf 167} (2008), 601--623.

\bibitem{HelfSer} H.A.\,Helfgott and \'A.\,Seress,
{\it On the diameter of permutation groups},
Ann. of Math. {\bf 179} (2014), 611--658.
%\bibitem{htm}  D.\,Holt and M.\,Macbeath,
%{\it Certain maximal characteristic subgroups of the free group of rank 2},
%Comm. Algebra {\bf 25} (1997), 1047--1077.

\bibitem{Huppert}
B. Huppert. {\it {E}ndliche {G}ruppen {I}}.
\newblock Grundlehren Math.\ Wiss.\ 134.
Springer-Verlag, Berlin, Heidelberg, New York, 1967


\bibitem{kwl} E.\,Kowalski,
{\it Explicit growth and expansion for $\mathrm{SL}_2$},
Int. Math. Res. Notices {\bf 2013} (2013), 5645--5708.

\bibitem{led} W.\,Ledermann,
Introduction to group theory, Longman, 1973.

\bibitem{ps} L.\,Pyber and E.\,Szab\'o,
{\it Growth in finite simple groups of Lie type of bounded rank},
(2010).
\texttt{http://arxiv.org/abs/1005.1858}

\end{thebibliography}
\end{document}